\theoremstyle{plain}
\newtheorem{prop}{Proposition}[section]
\newtheorem{coro}[prop]{Corollary}
\newtheorem{lemm}[prop]{Lemma}
\newtheorem{ques}[prop]{Question}
\theoremstyle{definition}
\newtheorem{defi}[prop]{Definition}
\newtheorem{exam}[prop]{Example}
\newtheorem{rema}[prop]{Remark}
\numberwithin{equation}{section}
\newcounter{ITEM}
\newcommand\ITEM[1]{\setcounter{ITEM}{#1}\leavevmode\hbox{\rm(\roman{ITEM})}}
\renewcommand\aa{a}
\newcommand\AAAA[2]{\Sigma_{#1,#2}^{\mathrm{0}}}
\newcommand\AF{T}
\newcommand\AFs{\AF^*}
\newcommand\AH{\Theta}
\newcommand\AHp{\AH'}
\newcommand\AHs{\AH^*}
\newcommand\bb{b}
\newcommand\BBBB[2]{\Sigma_{#1,#2}^{\mathrm{I}}}
\newcommand\Bi{\BR\infty}
\newcommand\BijA[2]{F_{#1,#2}^{\mathrm{0}}}
\newcommand\BijB[2]{F_{#1,#2}^{\mathrm{I}}}
\newcommand\BijC[2]{F_{#1,#2}^{\mathrm{II}}}
\newcommand\Bip{\BP\infty}
\newcommand\BP[1]{B_{#1}^+}
\newcommand\BVhat{\widehat{B\HS{-0.2}V}}
\newcommand\BR[1]{B_{#1}}
\newcommand\card[1]{\mathtt{\#}#1}
\newcommand\cc{c}
\newcommand\CCCC[2]{\Sigma_{#1,#2}^{\mathrm{II}_1}}
\newcommand\cl[1]{[#1]}
\newcommand\Cond{\diamondsuit}
\newcommand\dd{d}
\newcommand\DDDD[2]{\Sigma_{#1,#2}^{\mathrm{II}_2}}
\newcommand\dist{\mathsf{dist}}
\newcommand\dive{\mathrel{\preccurlyeq}\nobreak}
\newcommand\Dt[1]{\Delta_{#1}}
\newcommand\Dtt[1]{\underline\Delta_{#1}}
\newcommand\EF{\mathcal{E}_{F}}
\newcommand\EH{\mathcal{E}_{H}}
\newcommand\eqF{\eqp}
\newcommand\eqp{\equiv}
\newcommand\eqR{\equiv_\RRR}
\newcommand\ew{\varepsilon}
\newcommand\ff{f}
\newcommand\FF{F}
\newcommand\ffh{\widehat\ff}
\newcommand\FFp{F^+}
\newcommand\FFps{F^{+*}}
\newcommand\fft{\widetilde\ff}
\renewcommand\ge{\geqslant}
\newcommand\gf[1]{\tau_{#1}}
\renewcommand\gg{g}
\newcommand\ggh{\widehat\gg}
\newcommand\ggt{\widetilde\gg}
\newcommand\gh[1]{\theta_{#1}}
\newcommand\HH{H}
\newcommand\HHp{H^+}
\newcommand\HS[1]{\hspace{#1ex}}
\newcommand\HT[1]{\mathsf{ht}(#1)}
\newcommand\idf{\mathsf{id}}
\newcommand\ie{{\it i.e.}}
\newcommand\ii{i}
\newcommand\ind[1]{\mathsf{ind}(#1)}
\newcommand\inv{^{-1}}
\newcommand\jj{j}
\newcommand\kk{k}
\renewcommand\le{\leqslant}
\newcommand\lgg[1]{\vert#1\vert}
\newcommand\LI{\mathcal{O}}
\newcommand\LIp{\LI_{\Sigma}}
\newcommand\mm{m}
\newcommand\MM{M}
\newcommand\MON[2]{\langle#1\,\vert\,#2\rangle^+}
\newcommand\NF{\textsc{nf}}
\newcommand\nn{n}
\newcommand\NNNN{\mathbb{Z}_{\ge0}}
\newcommand\NPP[2]{N_{#1,#2}}
\newcommand\PF{\mathcal{P}_F}
\newcommand\PH{\mathcal{P}_H}
\newcommand\pdots{\HS{0.3}{\cdot}{\cdot}{\cdot}\HS{0.3}}
\newcommand\plf[1]{\lceil#1\rceil}
\newcommand\pp{p}
\newcommand\Pw{\mathfrak{P}}
\newcommand\qq{q}
\newcommand\redF[1]{\mathsf{red}(#1)}
\newcommand\redH[1]{\mathsf{red}(#1)}
\newcommand\resp{\mbox{\it resp., }}
\newcommand{\rev}{\curvearrowright}
\newcommand\RRR{\mathcal{R}}
\newcommand\sig[1]{\sigma_{\hspace{-0.2ex}#1}^{\null}}
\newcommand\SP[1]{\Sigma_{#1}}
\newcommand\SPP[2]{\Sigma_{#1,#2}}
\renewcommand\ss{s}
\newcommand\SSS{\mathcal{S}}
\newcommand\SSSs{\SSS^*}
\newcommand\Sym[1]{\mathfrak{S}_{#1}}
\newcommand\tI{\text{type}\ \mathrm{0}}
\newcommand\tII{\text{type}\ \mathrm{I}}
\newcommand\tIII{\text{type}\ \mathrm{II}_1}
\newcommand\tIV{\text{type}\ \mathrm{II}_2}
\newcommand\TO{\Rightarrow^{\HS{-0.3}*}}
\newcommand\toF{\Rightarrow}
\newcommand\toH{\Rightarrow_{\HS{-0.5}H}}
\newcommand\TOH{\Rightarrow_{\HS{-0.5}H}^*}
\newcommand\ttt{t}
\newcommand\uu{u}
\def\VR(#1,#2){\vrule width0pt height#1mm depth#2mm}
\newcommand\vv{v}
\newcommand\wdots{, ...\HS{0.2},}
\newcommand\ww{w}
\newcommand\WW{W}
\newcommand\xx{x}
\newcommand\XX{X}
\newcommand\yy{y}
\newcommand\ZZZZp{\mathbb{Z}_{>0}}
\author{Patrick DEHORNOY}
\address{Laboratoire de Math\'ematiques Nicolas Oresme UMR 6139\\ Universit\'e de Caen, 14032~Caen, France}
\email{patrick.dehornoy@unicaen.fr}
\urladdr{dehornoy.users.lmno.cnrs.fr}
\author{Emilie TESSON}
\address{Laboratoire de Math\'ematiques Nicolas Oresme UMR 6139\\ Universit\'e de Caen, 14032~Caen, France}
\title[GARSIDE COMBINATORICS FOR $\FFp$ AND A HYBRID WITH~$\Bip$]{GARSIDE COMBINATORICS FOR THOMPSON'S MONOID $\FFp$ AND A HYBRID WITH THE BRAID MONOID~$\Bip$}
\keywords{presented monoid; divisibility relation; simple elements; Thompson's group; braid group; normal form; Garside element; directed animal}
\subjclass{05E15, 20M05, 20E22, 68Q42}
\begin{document}

\begin{abstract}
On the model of simple braids, defined to be the left divisors of Garside's elements~$\Dt\nn$ in the monoid~$\Bip$, we investigate simple elements in Thompson's monoid~$\FFp$ and in a larger monoid~$\HHp$ that is a hybrid of~$\Bip$ and~$\FFp$: in both cases, we count how many simple elements left divide the right lcm of the first $\nn - 1$~atoms, and characterize their normal forms in terms of forbidden factors. In the case of~$\HHp$, a generalized Pascal triangle appears.
\end{abstract}

\maketitle

\section{Introduction}

Since the seminal work of F.A.\,Garside~\cite{Gar}, as extended in~\cite{Dlg} and~\cite{BrS}, it is known that Artin's braid group~$\BR\nn$ is a group of fractions for the monoid~$\BP\nn$ of positive $\nn$-strand braids and that the now called Garside element~$\Dt\nn$ plays a prominent role in the study of~$\BP\nn$. In particular, the divisors of~$\Dt\nn$ in~$\BP\nn$, called simple braids, form a family of~$\nn!$ elements in one-to-one correspondence with the permutations of $\{1 \wdots \nn\}$, leading to a remarkable combinatorics now at the heart of the algebraic study of~$\BR\nn$~\cite{Adj, Eps}, see~\cite[Chapter~IX]{Dir}. Subsequently, it was realised that such a situation can be found in many different contexts of groups and categories, always around a family of so-called simple elements resembling simple braids, and leading to various combinatorics, like, for instance, the dual Garside structure on~$\BR\nn$~\cite{BKL}, whose combinatorics is that of noncrossing partitions. 

Our aim in this paper is to investigate a Garside structure arising on Thompson's group~$\FF$~\cite{Tho, CFP} in connection with its submonoid~$\FFp$ generated by the standard (infinite) sequence of generators, corresponding to the presentation
\begin{equation}\label{E:PresF}
\FFp:= \MON{\gf1, \gf2, ...}{\gf\jj \gf\ii = \gf\ii \gf{\jj + 1} \quad\text{for}\quad \jj \ge \ii + 1}.
\end{equation}
To explain the similarity with braids and the natural questions in this non-finitely generated case, one should start from the infinite braid monoid
\begin{equation}\label{E:PresB}
\Bip = \bigg\langle \sig1, \sig2, ... \ \bigg\vert\ 
\begin{matrix}
\sig\jj \sig\ii = \sig\ii \sig\jj &\text{for} &\jj \ge \ii + 2\\
\sig\jj \sig\ii \sig\jj = \sig\ii \sig\jj \sig\ii &\text{for} &\jj = \ii + 1
\end{matrix}
\ \bigg\rangle^+:
\end{equation}
in this case, Garside's braid~$\Dt\nn$ is the right lcm of the $\nn - 1$~first atoms~$\sig1 \wdots \sig{\nn - 1}$ of~$\Bip$ (see Section~\ref{SS:Term} for a reminder about the terminology), and simple braids are those braids that left divide at least one element~$\Dt\nn$ in~$\Bip$. 

In the case of the monoid~$\FFp$, the atoms are the elements~$\gf\ii$, and we shall see that there exists for each~$\nn$ a well defined element~$\Dt\nn$ that is, in~$\FFp$, the right lcm of the first $\nn - 1$~atoms. Then we shall investigate the derived simple elements, namely the elements of~$\FFp$ that left divide at least one element~$\Dt\nn$. The main results proved here are that, for every~$\nn$, there exist $2^{\nn - 1}$~simple elements left dividing~$\Dt\nn$ in~$\FFp$, in explicit one-to-one correspondence with the subsets of~$\{1 \wdots \nn - 1\}$, and that simple elements form a Garside family in~$\FFp$\cite[Def.~III.1.31]{Dir}, thus guaranteeing the existence and properties of an associated greedy normal form in~$\FFp$. These results are established by combining the existence of a convergent rewrite system on~$\FFp$ and the reversing technique~\cite{Dff, Dia} for analyzing the divisibility relations of a presented monoid.

The above results are technically easy, and we then switch to a combinatorially more involved situation related to another monoid~$\HHp$, which is a hybrid of the braid monoid~$\Bip$ and the Thompson monoid~$\FFp$. Various hybrids of the groups~$\Bi$ and~$\FF$ have already been considered, in particular the group~$\BVhat$ of~\cite{Bri1, Bri2, Dhe}, which is a group of fractions for a monoid, that is a Zappa-Sz\'ep product of~$\FFp$ and~$\Bip$ and, therefore, inherits their Garside structures. Here we shall introduce and investigate a new hydrid, which is not a product but rather a mixture of the initial monoids~$\FFp$ and~$\Bip$. Indeed, we consider
\begin{equation}\label{E:PresH}
\HHp := \bigg\langle \gh1,\gh2, ... \ \bigg\vert\ 
\begin{matrix}
\gh\jj \gh\ii = \gh\ii \gh{\jj + 1} &\text{for} &\jj \ge \ii + 2\\
\gh\jj \gh\ii \gh\jj = \gh\ii \gh\jj \gh{\ii + 3} &\text{for} &\jj = \ii + 1
\end{matrix}
\ \bigg\rangle^{\!\scriptstyle+},
\end{equation}
in which the length~$2$ relations are Thompson's relations as in~\eqref{E:PresF}, whereas the length~$3$ relations are directly reminiscent of braid relations of~\eqref{E:PresB}, but with a shift of one index. Here, we investigate the basic properties of the monoid~$\HHp$ and, specifically, the associated Garside combinatorics, if this makes sense. Actually, it does: we shall see that, for every~$\nn$, the atoms~$\gh1 \wdots \gh{\nn - 1}$ admit a right lcm, again denoted by~$\Dt\nn$, so that it is natural to investigate simple elements, defined to be those that left divide some element~$\Dt\nn$. The main results proved here are that, for every~$\nn$, there exist $2 \cdot 3^{\nn - 2}$~simple elements left dividing~$\Dt\nn$ in~$\HHp$, with an explicit description of a distinguished expression for each of them. As in the case of~$\FFp$, these results are established using a convergent rewrite system on~$\HHp$ and the reversing technique; the proofs are more difficult than for~$\FFp$ and some of them require delicate inductive arguments. We hope that the existence of this nontrivial combinatorics will draw some attention to the monoid~$\HHp$, and to the group~$\HH$ presented by~\eqref{E:PresH}, which remains essentially mysterious.

The paper is divided into four sections after this introduction. In Section~\ref{S:Fp}, we investigate the monoid~$\FFp$ and the derived simple elements, providing a good warm-up for the sequel. In Section~\ref{S:Hp}, we establish various general properties of the monoid~$\HHp$, in particular the fact that it admits cancellation on both sides. Next, in Section~\ref{S:Delta}, we study the elements~$\Dt\nn$ of~$\HHp$ and count their left divisors by partitioning them into several families. Finally, in Section~\ref{S:NF}, we explicitly characterize the normal form (in the sense of some convergent rewrite system) of simple elements of~$\HHp$.

\subsection*{Acknowledgement}
The authors thank Matthieu Picantin for having pointed at the connection between the numbers~$\NPP\kk\ell$ of Section~\ref{SS:Part} and directed animals.

\section{Thompson's monoid~$\FFp$}\label{S:Fp}

Here we study the case of Thompson's monoid~$\FFp$, an easy first step. It is standard that \eqref{E:PresF} is a presentation of Thompson's group~$\FF$, and, as the relations involve no inverse of the generators, it makes sense to introduce the associated monoid~$\FFp$ and to consider the associated Garside combinatorics, if it exists. 

The section is divided into four parts. In Section~\ref{SS:Term}, we recall the standard terminology for the divisibility relations in a monoid, extensively used throughout the text. Next, in Section~\ref{SS:NFF}, we define a convergent rewrite system that selects a distinguished expression for every element of~$\FFp$. In Section~\ref{SS:RevF}, we recall basic notions about word reversing, here in the new version of~\cite{Djb}, and use them to show that $\FFp$ is cancellative and admits right lcms (least common right multiples). Finally, in Section~\ref{SS:GarF}, we investigate the elements~$\Dt\nn$ and describe their left divisors explicitly.

\subsection{The divisibility relations of a monoid}\label{SS:Term}

Let $\MM$ be a monoid (possibly, in particular, a free one, \ie, a monoid of words). For~$\aa, \bb$ in~$\MM$, we say that $\aa$ \emph{left divides}~$\bb$ in~$\MM$, or, equivalently, that $\bb$ is a \emph{right multiple} of~$\aa$, written $\aa \dive \bb$, if $\aa\xx = \bb$ holds for some~$\xx$ (of~$\MM$). If $\MM$ is left cancellative (meaning that $\xx\aa = \xx\bb$ implies $\aa = \bb$) and $1$ is the only invertible element in~$\MM$, the relation~$\dive$ is a partial ordering on~$\MM$. 

For~$\aa, \bb$ in~$\MM$, we say that $\cc$ is a \emph{right lcm} (least common right multiple) of~$\aa$ and~$\bb$ if $\aa \dive \cc$ and $\bb \dive \cc$ hold, and the conjunction of $\aa \dive \xx$ and~$\bb \dive \xx$ implies~$\cc \dive \xx$: in other words, $\cc$ is a lowest upper bound of~$\aa$ and~$\bb$ with respect to~$\dive$. 

The symmetric notions of a right divisor and a left multiple are defined similarly, replacing $\aa\xx = \bb$ with $\xx\aa = \bb$. Finally, we say that $\aa$ is a \emph{factor} of~$\bb$ if $\xx\aa\yy = \bb$ holds for some~$\xx, \yy$.

An element~$\aa$ of~$\MM$ is said to be an \emph{atom} if it admits no decomposition $\aa = \bb\cc$ with $\bb \not= 1$ and~$\cc \not= 1$.

\subsection{A normal form on~$\FFp$}\label{SS:NFF}

We begin our investigation of the monoid~$\FFp$. We recall that $\FFp$ is defined by the presentation
\begin{equation*}
\FFp:= \MON{\gf1, \gf2, ...}{\gf\jj \gf\ii = \gf\ii \gf{\jj + 1} \quad\text{for}\quad \jj \ge \ii + 1},
\end{equation*}
hereafter denoted by~$\PF$. We put $\AF:= \{\gf\ii \mid \ii \ge 1\}$, write $\AFs$ for the free monoid of all words in the alphabet~$\AF$, and~$\eqF$ for the congruence on~$\AFs$ generated by the relations of~$\PF$. We use~$\ew$ for the empty word. Our first tool for studying~$\FFp$ consists in defining a unique normal form using a rewrite system on~$\AFs$.

\begin{lemm}\label{L:ConvF}
Let~$\EF$ be the rewrite system on~$\AF^*$ defined by the rules
\begin{equation}\label{E:RewF}
\gf\ii \gf{\jj + 1} \to \gf{\jj } \gf\ii \text{\quad for $\ii \ge 1$ and $\jj \ge \ii + 1$}.
\end{equation}
Then $\EF$ is convergent.
\end{lemm}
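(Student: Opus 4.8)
The plan is to prove the two ingredients of convergence separately: \emph{termination} (the absence of an infinite rewrite sequence) and \emph{local confluence} (any two one-step reducts of a word have a common descendant), and then to invoke Newman's Lemma, valid for an arbitrary abstract rewriting system, to promote local confluence to full confluence. Together, termination and confluence give convergence, so the burden of the proof lies in these two verifications.

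For termination, I would attach to each word~$\ww$ of~$\AFs$ the integer weight equal to the sum of the indices of its letters. Applying a rule replaces a factor~$\gf\ii\gf{\jj+1}$ by~$\gf\jj\gf\ii$, leaving the length unchanged while lowering the local index sum from~$\ii+(\jj+1)$ to~$\jj+\ii$, that is, by exactly~$1$. Hence every rewriting step strictly decreases the weight, a nonnegative integer, so no infinite rewrite sequence exists and~$\EF$ terminates. This argument is insensitive to the fact that $\EF$ comprises infinitely many rules over an infinite alphabet.

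For local confluence, I would apply the critical-pair criterion for string rewriting. Every left-hand side~$\gf\ii\gf{\jj+1}$ has length~$2$, and each admissible left-hand side determines its rule uniquely, so the only overlaps are those in which the final letter of one left-hand side is the initial letter of another. These overlaps are exactly the words~$\gf\aa\gf\bb\gf\cc$ for which both~$\gf\aa\gf\bb$ and~$\gf\bb\gf\cc$ are left-hand sides, that is, $\bb\ge\aa+2$ and~$\cc\ge\bb+2$. For such a triple the two competing reductions are~$\gf\aa\gf\bb\gf\cc \toF \gf{\bb-1}\gf\aa\gf\cc$ and~$\gf\aa\gf\bb\gf\cc \toF \gf\aa\gf{\cc-1}\gf\bb$, and I would show that, upon continued reduction, both reach the single word~$\gf{\cc-2}\gf{\bb-1}\gf\aa$, whose indices are strictly decreasing and which is therefore irreducible.

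The one point requiring care—and the main, if modest, obstacle—is checking that at each intermediate step the side condition~$\jj\ge\ii+1$ still holds, so that the diamond genuinely closes instead of stalling at a non-joinable pair. On the left branch one reduces~$\gf\aa\gf\cc$ (legitimate since $\cc\ge\bb+2>\aa+2$) and then~$\gf{\bb-1}\gf{\cc-1}$ (legitimate since $\cc-1\ge\bb+1$); on the right branch one reduces~$\gf\aa\gf{\cc-1}$ and then~$\gf\aa\gf\bb$; the inequalities~$\bb\ge\aa+2$ and~$\cc\ge\bb+2$ guarantee each condition in turn, and both branches terminate at~$\gf{\cc-2}\gf{\bb-1}\gf\aa$. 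Thus every critical pair is joinable, local confluence holds, and combined with termination it yields confluence, so $\EF$ is convergent.
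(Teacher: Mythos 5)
Your proposal is correct and follows essentially the same route as the paper: termination via the strictly decreasing sum of indices, local confluence via the single family of length-$3$ critical overlaps closing at $\gf{\cc-2}\gf{\bb-1}\gf\aa$ (the paper's $\gf\kk\gf\jj\gf\ii$), and Newman's diamond lemma. The only difference is notational (your parametrization $\bb\ge\aa+2$, $\cc\ge\bb+2$ versus the paper's $\gf\ii\gf{\jj+1}\gf{\kk+2}$), and your explicit check of the side conditions along both branches is a detail the paper leaves implicit.
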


\begin{proof}
As is standard, see for instance~\cite{Ter}, we shall check that~$\EF$ is noetherian and locally confluent. We write~$\toF$ for the one-step rewrite relation associated with the rules of~\eqref{E:RewF}, that is, for the family of all pairs $$(\ww_1 \gf\ii \gf{\jj + 1} \ww_2\ ,\ \ww_1 \gf{\jj} \gf\ii \ww_2) \quad\text{with $\jj \ge \ii + 1$},$$
and~$\TO$ for the reflexive--transitive closure of~$\toF$. For~$\ww$ in~$\AF^*$, let $\rho(\ww)$ be the sum of the indices of the generators~$\gf\ii$ occurring in~$\ww$. Then $\ww \toF \ww'$ implies $\rho(\ww) > \rho(\ww')$, and, therefore, there is no proper infinite sequence for~$\toF$. So $\EF$ is noetherian.

Next, assume $\ww \toF \ww'$ and $\ww \toF \ww''$. By definition, $\ww'$ and $\ww''$ are obtained from~$\ww$ by replacing some length~$2$ factor~$\gf\ii \gf{\jj + 1}$ with the corresponding word~$\gf\jj \gf\ii$. For local confluence, the case of disjoint factors is trivial, and the critical case of overlapping factors corresponds to $\ww = \gf\ii \gf{\jj + 1} \gf{\kk + 2}$ with $\jj \ge \ii + 1$ and~$\kk \ge \jj + 1$, leading to $\ww' = \gf{\jj} \gf\ii \gf{\kk + 2}$ and $\ww'' = \gf\ii \gf{\kk + 1} \gf{\jj + 1}$. One then obtains
\begin{equation}\label{E:LocConfF}
\VR(7,7)\begin{picture}(55,0)(0,6)
\put(-5,6){$\gf\ii \gf{\jj + 1} \gf{\kk + 2}$}
\put(21,12){$\gf{\jj} \gf\ii \gf{\kk + 2}$}
\put(19,0){$\gf\ii \gf{\kk + 1} \gf{\jj + 1}$}
\put(45,6){$\gf{\kk} \gf{\jj} \gf\ii$\ ,}
\put(13,3){\rotatebox{-30}{\hbox{$\toF$}}}
\put(13,8.5){\rotatebox{30}{\hbox{$\toF$}}}
\put(36,1.5){\rotatebox{30}{\hbox{$\toF^2$}}}
\put(36,10.5){\rotatebox{-30}{\hbox{$\toF^2$}}}
\end{picture}
\end{equation}
It follows that $\EF$ is locally confluent, hence convergent by Newman's diamond lemma~\cite{New}.
\end{proof}

For every word~$\ww$ of~$\AFs$, we shall denote by~$\redF\ww$ the unique $\EF$-reduced word~$\ww'$ satisfying $\ww \TO \ww'$. By definition, the words~$\ww$ and~$\redF\ww$ represent the same element of~$\FFp$, and $\redF\ww$ is the unique $\EF$-reduced word in the equivalence class of~$\ww$ in~$\FFp$. Thus, Lemma~\ref{L:ConvF} implies

\begin{prop}\label{P:RedF}
$\EF$-reduced words provide a unique normal form for the elements of the monoid~$\FFp$.
\end{prop}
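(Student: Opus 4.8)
The plan is to deduce the statement directly from the convergence of~$\EF$ established in Lemma~\ref{L:ConvF}, exploiting the fact that the rewrite rules of~$\EF$ are nothing but the defining relations of~$\PF$ read in one direction. The heart of the matter is to identify the congruence~$\eqF$ with the equivalence on~$\AFs$ generated by the one-step relation~$\toF$, after which the conclusion follows from the standard Church--Rosser property of a convergent system.

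First I would observe that each rewrite step is a special case of a defining relation: a rule $\gf\ii \gf{\jj + 1} \to \gf\jj \gf\ii$ with $\ii \ge 1$ and $\jj \ge \ii + 1$ is exactly the relation $\gf\jj \gf\ii = \gf\ii \gf{\jj + 1}$ of~$\PF$ oriented from right to left, and this correspondence between rules and relations is a bijection. Since $\toF$ is by construction closed under left and right multiplication by arbitrary words (a rule is applied inside a context $\ww_1 \cdots \ww_2$), its symmetric, reflexive, and transitive closure is a congruence on~$\AFs$ containing every defining relation of~$\PF$; conversely, that closure is contained in~$\eqF$ because each step replaces a factor by an $\eqF$-equivalent one. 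Hence the two congruences coincide: for~$\ww, \ww'$ in~$\AFs$, one has $\ww \eqF \ww'$ if and only if $\ww$ and~$\ww'$ are joined by a finite zigzag of~$\toF$-steps.

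Next I would invoke the Church--Rosser property. Because $\EF$ is convergent, any two words joined by a zigzag of~$\toF$-steps share the same $\EF$-reduced form; this is the usual consequence of local confluence (giving confluence via Newman's lemma) together with termination, already recorded in the remark preceding the statement through the well-defined map $\ww \mapsto \redF\ww$. Combining this with the previous paragraph yields the key equivalence: $\ww \eqF \ww'$ holds if and only if $\redF\ww = \redF{\ww'}$. Consequently $\ww \mapsto \redF\ww$ is constant on each $\eqF$-class, hence induces a well-defined map from~$\FFp$ to the set of $\EF$-reduced words; this map is surjective since every reduced word is its own reduced form, and injective by the ``only if'' direction just obtained. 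Therefore the $\EF$-reduced words are in one-to-one correspondence with the elements of~$\FFp$, which is precisely the assertion.

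The only genuinely delicate point is the identification of~$\eqF$ with the zigzag closure of~$\toF$, that is, checking that nothing is lost by orienting the relations into rules; once this is granted, everything else is the routine Church--Rosser argument for a convergent rewrite system, so I expect no real obstacle beyond this bookkeeping.
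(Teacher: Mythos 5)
Your proposal is correct and follows essentially the same route as the paper, which likewise derives the statement directly from the convergence of~$\EF$ established in Lemma~\ref{L:ConvF}, noting that $\redF\ww$ is the unique $\EF$-reduced word in the $\eqF$-class of~$\ww$. The paper leaves the identification of~$\eqF$ with the zigzag closure of~$\toF$ and the Church--Rosser argument implicit, so your write-up simply makes explicit what the paper treats as standard.
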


It directly follows from the definition that a word of~$\AFs$ is $\EF$-reduced if, and only if, it has no length~$2$ factor~$\gf\ii \gf{\jj + 1}$ with $\jj \ge \ii + 1$, which implies that, for every~$\nn$, the set of $\EF$-reduced words lying in~$\{\gf1 \wdots \gf\nn\}^*$ is a regular language~\cite{Eps, HoR}.

\subsection{Using word reversing}\label{SS:RevF}

The second method for investigating the monoid~$\FFp$ is word reversing~\cite{Dia}, a distillation of an argument that ultimately stems from Garside's approach to braid monoids~\cite{Gar}. Here we shall describe reversing using the new formalism of~\cite{Djb}, which is specially convenient in the current case (and in that of~$\HHp$ in Section~\ref{SS:RevH}). So we introduce reversing as a binary relation on pairs of words connected with a particular type of van Kampen diagram.

\begin{defi}\cite{Djb}\label{D:Rev}
A \emph{reversing grid} for a monoid presentation~$(\SSS, \RRR)$, or \emph{$(\SSS, \RRR)$-grid}, is a rectangular diagram consisting of finitely many matching $\SSS \cup \{\ew\}$-labeled pieces of the types

- \VR(12,9)\begin{picture}(34,0)(-3,6)
\pcline{->}(1,13)(21,13)\taput{$\ttt$}
\pcline{->}(0,12)(0,1)\tlput{$\ss$}
\pcline{->}(1,0)(7,0)\taput{$\ttt_1$}
\pcline[style=etc](8,0)(14,0)
\pcline{->}(15,0)(21,0)\taput{$\ttt_\qq$}
\pcline{->}(22,12)(22,8)\trput{$\ss_1$}
\pcline[style=etc](22,7)(22,6)
\pcline{->}(22,5)(22,1)\trput{$\ss_\pp$}
\end{picture} 
\parbox{90mm}{with $\ss, \ttt, \ss_1 \wdots \ss_\pp, \ttt_1 \wdots \ttt_\qq$ in~$\SSS$\\ \null\hspace{20mm}and $\ss\ttt_1 {\pdots} \ttt_\qq = \ttt \ss_1 {\pdots} \ss_\pp$ a relation of~$\RRR$,} 

- \VR(6,8)\begin{picture}(16,0)(-3,4)
\pcline{->}(1,8)(9,8)\taput{$\ss$}
\pcline{->}(0,7)(0,1)\tlput{$\ss$}
\pcline{->}(1,0)(9,0)\tbput{$\ew$}
\pcline{->}(10,7)(10,1)\trput{$\ew$}
\end{picture}, \quad
\begin{picture}(16,0)(-3,4)
\pcline{->}(1,8)(9,8)\taput{$\ew$}
\pcline{->}(0,7)(0,1)\tlput{$\ss$}
\pcline{->}(1,0)(9,0)\tbput{$\ew$}
\pcline{->}(10,7)(10,1)\trput{$\ss$}
\end{picture}, \quad
\begin{picture}(16,0)(-3,4)
\pcline{->}(1,8)(9,8)\taput{$\ttt$}
\pcline{->}(0,7)(0,1)\tlput{$\ew$}
\pcline{->}(1,0)(9,0)\tbput{$\ttt$}
\pcline{->}(10,7)(10,1)\trput{$\ew$}
\end{picture}, \quad
\begin{picture}(16,0)(-3,4)
\pcline{->}(1,8)(9,8)\taput{$\ew$}
\pcline{->}(0,7)(0,1)\tlput{$\ew$}
\pcline{->}(1,0)(9,0)\tbput{$\ew$}
\pcline{->}(10,7)(10,1)\trput{$\ew$}
\end{picture}\quad
with $\ss, \ttt$ in~$\SSS$.

\noindent For $\uu, \vv, \uu_1, \vv_1$ in~$\SSSs$, we say that an $(\SSS, \RRR)$-grid~$\Gamma$ goes \emph{from~$(\uu, \vv)$ to~$(\uu_1, \vv_1)$} if the labels of the left and top edges of~$\Gamma$ form the words~$\uu$ and~$\vv$, respectively, whereas the labels of the right and bottom edges form the words~$\uu_1$ and~$\vv_1$. We write $(\uu, \vv) \rev_\RRR (\uu_1, \vv_1)$ if there exists a $(\SSS, \RRR)$-grid from~$(\uu, \vv)$ to~$(\uu_1, \vv_1)$.
\end{defi}

\begin{exam}\label{X:Rev}
Two typical $\PF$-grids are
\begin{equation}\label{E:Grid}
\VR(9,6)\begin{picture}(25,0)(0,4)
\pcline{->}(1,10)(11,10)\taput{$\gf1$}
\pcline{->}(13,10)(23,10)\taput{$\gf3$}
\pcline{->}(1,0)(11,0)\tbput{$\gf1$}
\pcline{->}(13,0)(23,0)\tbput{$\ew$}
\pcline{->}(0,9)(0,1)\tlput{$\gf2$}
\pcline{->}(12,9)(12,1)\trput{$\gf3$}
\pcline{->}(24,9)(24,1)\trput{$\ew$,}
\end{picture}
\hspace{20mm}
\begin{picture}(25,0)(0,4)
\pcline{->}(1,10)(11,10)\taput{$\gf2$}
\pcline{->}(13,10)(23,10)\taput{$\gf1$}
\pcline{->}(1,0)(11,0)\tbput{$\ew$}
\pcline{->}(13,0)(23,0)\tbput{$\gf1$}
\pcline{->}(0,9)(0,1)\tlput{$\gf2$}
\pcline{->}(12,9)(12,1)\trput{$\ew$}
\pcline{->}(24,9)(24,1)\trput{$\ew$,}
\end{picture}
\end{equation}
witnessing for the relations $(\gf2, \gf1\gf3) \rev (\ew, \gf1)$ and $(\gf2, \gf2\gf1) \rev (\ew, \gf1)$, respectively---we omit the index in~$\rev$ when there is no ambiguity. Note that, because all relations of~$\PF$ involve words of length~$2$, the pieces of the first type in Definition~\ref{D:Rev} are squares: the right and bottom edges each consist of one single $\SSS$-labeled arrow.
\end{exam}

The following result is (a special case of a result) established in~\cite{Djb}. Below we say that a monoid presentation~$(\SSS, \RRR)$ is \emph{homogeneous} if every relation in~$\RRR$ has the form $\ww = \ww'$ with $\ww, \ww'$ of the same length, and \emph{right complemented} if it contains no relation $\ss... = \ss...$ and at most one relation $\ss... = \ttt...$ for all $\ss \not= \ttt$ in~$\SSS$. On the other hand, two $(\SSS, \RRR)$-grids~$\Gamma$ from~$(\uu, \vv)$ to~$(\uu_1, \vv_1)$ and $\Gamma'$ from~$(\uu', \vv')$ to~$(\uu'_1, \vv'_1)$ are \emph{equivalent} if we have $\uu' \eqR \uu$, $\vv' \eqR \vv$, $\uu'_1 \eqR \uu_1$, and~$\vv'_1 \eqR \vv_1$, where $\eqR$ is the congruence on~$\SSSs$ generated by~$\RRR$---so that the monoid~$\MON\SSS\RRR$ is~$\SSSs{/}{\eqR}$.

\begin{lemm}\cite[Propositions~1.12, 1.14, 1.16]{Djb}\label{L:Rev}
Assume that $(\SSS, \RRR)$ is a homogeneous right complemented monoid presentation and, for every~$\ss$ in~$\SSS$ and every relation $\ww = \ww'$ in~$\RRR$,
\begin{equation}\label{E:CompatS}\tag{$\Cond$}
\parbox{113mm}{for every grid from~$(\ss, \ww)$, there is an equivalent grid from~$(\ss, \ww')$, \\ \null\hfill and vice versa.}
\end{equation}

\ITEM1 Two words~$\uu, \vv$ of~$\SSSs$ represent the same element of the monoid~$\MON\SSS\RRR$ if, and only if, $(\uu, \vv) \rev (\ew, \ew)$ holds.

\ITEM2 The monoid~$\MON\SSS\RRR$ is left cancellative.

\ITEM3 Two elements~$\aa, \bb$ of~$\MON\SSS\RRR$ represented by~$\uu$ and~$\vv$ in~$\SSSs$ admit a common right multiple if, and only if, $(\uu, \vv) \rev_\RRR (\uu_1, \vv_1)$ holds for some~$\uu_1, \vv_1$; in this case, the element represented by~$\uu\vv_1$ is a right lcm of~$\aa$ and~$\bb$. In the special case when, for all~$\ss \not= \ttt$ in~$\SSS$, there exist~$\ss', \ttt'$ in~$\SSS$ such that $\ss\ttt' = \ttt\ss'$ is a relation of~$\RRR$, there always exist~$\uu_1, \vv_1$ as above, and any two elements of~$\MON\SSS\RRR$ admit a right lcm.
\end{lemm}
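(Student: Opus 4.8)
The plan is to treat reversing grids as van Kampen diagrams and to exploit the condition~$(\Cond)$ as a local coherence property from which the global algebraic consequences follow. First I would record the two facts that hold with no appeal to~$(\Cond)$. Reading the boundary of any $(\SSS,\RRR)$-grid from~$(\uu,\vv)$ to~$(\uu_1,\vv_1)$ yields the relation $\uu\vv_1 \eqR \vv\uu_1$; in particular, $(\uu,\vv) \rev (\ew,\ew)$ always forces $\uu \eqR \vv$, which is the ``if'' direction of~\ITEM1. Next, because the presentation is homogeneous, every edge of a grid has length bounded by that of the input, so the square-filling process is noetherian, with no infinite reversing sequence; and right-complementedness makes each elementary step deterministic, since at every inner corner at most one relation is available to complete a square.

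The crux is to upgrade~$(\Cond)$, which is stated for a single letter~$\ss$ against a relation, into the global coherence statement that equivalent inputs yield equivalent outputs: whenever $\uu \eqR \uu'$ and $\vv \eqR \vv'$, every grid from~$(\uu,\vv)$ is equivalent to a grid from~$(\uu',\vv')$. I would establish this by induction, first extending~$(\Cond)$ from a single letter to an arbitrary left word (decomposing a grid on $(\ss\uu'',\vv)$ into a grid on $(\ss,\vv)$ stacked over a grid on $(\uu'',\cdot)$), then symmetrically on the right, and finally chaining these replacements along a derivation witnessing $\uu \eqR \uu'$ and $\vv \eqR \vv'$. This is the main obstacle: it is a confluence-type argument, the \emph{cube condition}, and the delicate point is that the outputs match only up to~$\eqR$, so the whole bookkeeping has to be carried out modulo the congruence.

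Granting coherence, the remaining part of~\ITEM1 is immediate: if $\uu \eqR \vv$, then $(\uu,\vv)$ is an input equivalent to $(\uu,\uu)$, and a grid from $(\uu,\uu)$ reverses to $(\ew,\ew)$ along its diagonal of trivial $\ew$-squares; since homogeneity makes $\ew$ the only word in its class, the output is exactly $(\ew,\ew)$. For~\ITEM2, given $\xx\aa \eqR \xx\bb$ I would apply~\ITEM1 to get $(\xx\aa, \xx\bb) \rev (\ew,\ew)$, then use the grid decomposition: the common prefix~$\xx$ reverses against itself deterministically along the diagonal and is consumed, leaving a grid from $(\aa,\bb)$ to $(\ew,\ew)$, whence $\aa \eqR \bb$.

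Finally,~\ITEM3 combines the same tools. If $(\uu,\vv) \rev (\uu_1,\vv_1)$, then $\uu\vv_1 \eqR \vv\uu_1$ is a common right multiple of the elements represented by~$\uu$ and~$\vv$; conversely, a common right multiple $\uu\yy \eqR \vv\zz$ forces, through~\ITEM1 and coherence, that reversing $(\uu,\vv)$ succeeds and terminates at some pair $(\uu_1,\vv_1)$. That $\uu\vv_1$ is least is a minimality argument: any common multiple arises by continuing the reversing past $(\uu_1,\vv_1)$, so $\uu\vv_1$ left divides it, giving the right lcm. For the special case, if every pair of distinct atoms carries a relation $\ss\ttt' = \ttt\ss'$, then no inner corner can obstruct a square, so by the noetherianity noted above reversing always terminates at a genuine pair $(\uu_1,\vv_1)$, yielding right lcms unconditionally.
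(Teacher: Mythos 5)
The paper does not prove this lemma: it is imported verbatim from \cite{Djb} (Propositions~1.12, 1.14, 1.16), so there is no in-paper proof to compare against, and your sketch has to stand on its own. Its overall architecture is the standard one for subword reversing --- reading the grid boundary gives the ``if'' of \ITEM1, right-complementedness gives determinism and hence uniqueness of grids, the local condition~\eqref{E:CompatS} must be propagated to a global coherence statement, and \ITEM2 and \ITEM3 then follow by decomposing a grid on a concatenated input into four sub-blocks. All of that matches what the cited result actually does.

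There is, however, one concretely false step: the claim that ``because the presentation is homogeneous, every edge of a grid has length bounded by that of the input, so the square-filling process is noetherian, with no infinite reversing sequence.'' The paper itself refutes this: $\PH$ is homogeneous, yet the attempted grid from~$(\gh2, \gh1\gh3)$ discussed after Proposition~\ref{P:CommMult} never terminates. In a homogeneous presentation a relation $\ss\ttt_1\pdots\ttt_\qq = \ttt\ss_1\pdots\ss_\pp$ has $\pp = \qq$ possibly at least~$2$, so one square turns two single-letter edges into two edges of length~$\pp$, and reversing may loop forever. Homogeneity is needed for different purposes: it makes $\ew$ alone in its $\eqR$-class (the last step of your argument for~\ITEM1), and it supplies the well-founded measure on which the induction propagating~\eqref{E:CompatS} from single letters to arbitrary words runs --- precisely the step you correctly flag as ``the main obstacle'' but then leave unexamined: after replacing the sub-grid from~$(\ss,\ww)$ by one from~$(\ss,\ww')$, the interface word feeding the lower sub-grid changes only up to~$\eqR$, so the induction hypothesis must cover all equivalent interfaces, and without a length or weight argument the recursion is not obviously well-founded. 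Finally, termination in the special case of~\ITEM3 is true, but for a reason other than the one you invoke: there every relation has the form $\ss\ttt' = \ttt\ss'$ with $\ss', \ttt'$ single letters, so the grid is a literal $\lgg\uu \times \lgg\vv$ array of squares and is finite by construction, not by any noetherianity of the rewriting.
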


Applying Lemma~\ref{L:Rev}, we deduce:

\begin{prop}\label{P:LcmF}
The monoid~$\FFp$ is left and right cancellative. Any two elements of~$\FFp$ admit a right lcm. Any two elements of~$\FFp$ that admit a common left multiple admit a left lcm.
\end{prop}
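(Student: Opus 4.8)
The plan is to deduce all four assertions of Proposition~\ref{P:LcmF} from the machinery of Lemma~\ref{L:Rev}, so the first task is to verify that the presentation~$\PF$ satisfies the hypotheses of that lemma. Homogeneity is immediate, since every relation $\gf\jj \gf\ii = \gf\ii \gf{\jj + 1}$ has both sides of length~$2$. For right complementedness, I would observe that each relation begins with a distinct pair of letters on the two sides: the left side starts with~$\gf\jj$ and the right side with~$\gf\ii$, and since $\jj \ge \ii + 1$ forces $\gf\jj \not= \gf\ii$, there is no relation of the form $\ss\dots = \ss\dots$; moreover, for a fixed ordered pair $(\gf\jj, \gf\ii)$ with $\jj \ge \ii + 1$ there is exactly one relation, so there is at most one relation $\ss\dots = \ttt\dots$ for each $\ss \not= \ttt$. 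The remaining hypothesis is the coherence condition~\eqref{E:CompatS}, which I expect to be the main point of the verification.

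To check~\eqref{E:CompatS}, I would fix a generator~$\gf\kk$ and a relation $\gf\jj \gf\ii = \gf\ii \gf{\jj + 1}$ with $\jj \ge \ii + 1$, and show that every reversing grid starting from $(\gf\kk, \gf\jj\gf\ii)$ has an equivalent grid starting from $(\gf\kk, \gf\ii\gf{\jj + 1})$, and conversely. Because $\PF$ is right complemented and all relations have length~$2$, the reversing of a single letter against a two-letter word is entirely determined by the rules, so this reduces to a finite case analysis on the relative position of~$\kk$ with respect to~$\ii$ and~$\jj$; in each case one computes the two terminal pairs and checks they agree up to~$\eqF$. The reversing computations illustrated in Example~\ref{X:Rev} are exactly the kind of elementary square-completion steps that arise here, and I anticipate that the critical overlap already treated in the local confluence diagram~\eqref{E:LocConfF} of Lemma~\ref{L:ConvF} is the underlying combinatorial fact that makes the two sides match. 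This coherence verification is the step I expect to require the most care, since one must confirm that no branch of the reversing process diverges.

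Once the hypotheses are in place, the conclusions follow almost directly. Left cancellativity of~$\FFp$ is Lemma~\ref{L:Rev}\ITEM2. For right cancellativity, I would exploit the symmetry of the relations: the map reversing each word (reading it right to left) sends~$\PF$ to a presentation of the same form, to which the left-cancellativity result applies, yielding right cancellativity of~$\FFp$; alternatively one checks directly that the opposite presentation is again homogeneous, right complemented, and satisfies the analogue of~\eqref{E:CompatS}. The existence of right lcms comes from Lemma~\ref{L:Rev}\ITEM3: here I would use the special case at the end of that statement, verifying that for any two distinct atoms~$\gf\kk, \gf\ell$ with $\kk < \ell$ there exist~$\gf{\kk'}, \gf{\ell'}$ making $\gf\kk \gf{\ell'} = \gf\ell \gf{\kk'}$ a relation of~$\PF$ (indeed $\gf\kk \gf\ell = \gf\ell \gf\kk$ rewrites via $\gf\ell \gf\kk = \gf\kk \gf{\ell + 1}$, so $\ell' = \kk$ and the pairing holds), which guarantees that any two elements admit a common right multiple and hence a right lcm. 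Finally, the left-lcm statement follows from right cancellativity together with a standard argument: among the common left multiples of two elements, the cancellativity and the existence of normal forms from Proposition~\ref{P:RedF} ensure that a least one exists whenever the set is nonempty, completing the proof.
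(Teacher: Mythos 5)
Your overall strategy---verify that $\PF$ is homogeneous, right complemented, and satisfies Condition~\eqref{E:CompatS}, then harvest Lemma~\ref{L:Rev}---is exactly the paper's, and your treatment of left cancellativity and of the existence of right lcms (via the special case of Lemma~\ref{L:Rev}\ITEM3, since every pair of distinct generators heads exactly one relation of~$\PF$) is correct. Two points need repair, though. First, your primary justification for right cancellativity, namely that reading words right to left ``sends $\PF$ to a presentation of the same form'', is false: the reversed relations are $\gf\ii\gf\jj = \gf{\jj+1}\gf\ii$ for $\jj \ge \ii + 1$, and these are not relations of~$\PF$ (for instance $\gf1\gf2 = \gf3\gf1$ is a reversed relation, whereas $\gf1\gf2$ occurs in no relation of~$\PF$); there is no evident anti-automorphism here. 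Your fallback---checking directly that the opposite presentation is homogeneous, left complemented, and satisfies the analogue of~\eqref{E:CompatS}---is the correct route, and it is what the paper does under the name of left reversing grids.

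Second, and more seriously, your final step for the conditional left lcm is a genuine gap. Cancellativity together with the normal form of Proposition~\ref{P:RedF} does not imply that a nonempty set of common left multiples has a least element: in a general cancellative monoid one only obtains minimal common multiples, not least ones, and nothing in your argument excludes two incomparable minimal common left multiples. The correct argument is the one you already set up for right cancellativity: once the opposite presentation is shown to satisfy the hypotheses of Lemma~\ref{L:Rev}, the counterpart of Lemma~\ref{L:Rev}\ITEM3 for left reversing says precisely that two elements admitting a common left multiple admit a left lcm. Note that the unconditional special case of that statement does not apply on this side---there is no relation $\pdots\gf1 = \pdots\gf2$ in~$\PF$, so $\gf1$ and~$\gf2$ have no common left multiple at all---which is why the left lcm assertion must remain conditional.
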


\begin{proof}
In view of applying Lemma~\ref{L:Rev}, we observe that the presentation~$\PF$ is homogeneous (all relations are of the form $\ww = \ww'$ with $\ww$ and~$\ww'$ of length two), right complemented with one relation $\gf\ii... = \gf\jj...$ for all~$\ii, \jj$, and that Condition~\eqref{E:CompatS} holds for every~$\gf\ii$ and every relation of~$\PF$. To this end, we consider all pairs $(\gf\ii, \gf\jj \gf{\kk + 1})$ with $\kk \ge \jj + 1$, and compare the reversing grids from~$(\gf\ii, \gf\jj \gf{\kk + 1})$ and from~$(\gf\ii, \gf\kk \gf\jj)$: the two grids of Example~\ref{X:Rev} are typical, corresponding to $\ii = 2$, $\jj =\nobreak 1$, and~$\kk = 2$, and they are indeed equivalent, since both admit as output~$(\ew, \gf1)$. The number of triples~$(\ii, \jj, \kk)$ to consider is infinite but only finitely many patterns may occur, according to the position of~$\ii$ with respect to~$\jj$ and~$\kk$. We skip the details, which are fairly obvious. Having established~\eqref{E:CompatS}, we deduce from Lemma~\ref{L:Rev}\ITEM2 that the monoid~$\FFp$ is left cancellative and from Lemma~\ref{L:Rev}\ITEM3 that any two elements of~$\FFp$ admit a right lcm.

To study left multiples, we observe that the presentation~$\PF$ is also left complemented (in the obvious sense), and consider the notion of a left reversing grid, which is symmetric to the above notion of a right reversing grid (which amounts to considering the opposed monoid). To this end, we replace each elementary diagram \VR(12,9)\begin{picture}(29,0)(-2,5)
\pcline{->}(1,13)(21,13)\taput{$\ttt$}
\pcline{->}(0,12)(0,1)\tlput{$\ss$}
\pcline{->}(1,0)(7,0)\tbput{$\ttt_1$}
\pcline[style=etc](8,0)(14,0)
\pcline{->}(15,0)(21,0)\tbput{$\ttt_\qq$}
\pcline{->}(22,12)(22,8)\trput{$\ss_1$}
\pcline[style=etc](22,7)(22,6)
\pcline{->}(22,5)(22,1)\trput{$\ss_\pp$}
\end{picture} 
of Definition~\ref{D:Rev} with its counterpart \VR(12,9)\begin{picture}(30,0)(-5,5.5)
\pcline{->}(1,0)(22,0)\tbput{$\ttt$}
\pcline{->}(22,12)(22,1)\trput{$\ss$}
\pcline{->}(1,13)(7,13)\taput{$\ttt_1$}
\pcline[style=etc](8,13)(14,13)
\pcline{->}(15,13)(21,13)\taput{$\ttt_\qq$}
\pcline{->}(0,12)(0,8)\tlput{$\ss_1$}
\pcline[style=etc](0,7)(0,6)
\pcline{->}(0,5)(0,1)\tlput{$\ss_\pp$}
\end{picture}
for $\ss_1 {\pdots} \ss_\pp \ttt = \ttt_1 {\pdots} \ttt_\qq \ss$ in~$\RRR$ and, similarly, replace \VR(6,6)\begin{picture}(16,0)(-3,3)
\pcline{->}(1,8)(9,8)\taput{$\ss$}
\pcline{->}(0,7)(0,1)\tlput{$\ss$}
\pcline{->}(1,0)(9,0)\tbput{$\ew$}
\pcline{->}(10,7)(10,1)\trput{$\ew$}
\end{picture}
with \VR(6,6)\begin{picture}(16,0)(-3,3)
\pcline{->}(1,8)(9,8)\taput{$\ew$}
\pcline{->}(0,7)(0,1)\tlput{$\ew$}
\pcline{->}(1,0)(9,0)\tbput{$\ss$}
\pcline{->}(10,7)(10,1)\trput{$\ss$.}
\end{picture}
Then one easily checks that the counterpart of~\eqref{E:CompatS} is satisfied and one deduces, by the counterpart of Lemma~\ref{L:Rev}\ITEM2, that~$\FFp$ is right cancellative. Finally, the counterpart of Lemma~\ref{L:Rev}\ITEM3 implies that any two elements of~$\FFp$ that admit a common left multiple admit a left lcm. However, two elements of~$\FFp$ need not always admit a common left multiple: there is no relation $...\gf1 = ...\gf2$ in~$\PF$, and, therefore, the counterpart of Lemma~\ref{L:Rev}\ITEM3 implies that $\gf1$ and $\gf2$ admit no common left-multiple in~$\FFp$.
\end{proof}

It follows from Proposition~\ref{P:LcmF} and Ore's classical theorem~\cite{Ore} that the monoid~$\FFp$ embeds in its enveloping group, which is the group presented by~$\PF$, namely Thompson's group~$\FF$, and that the latter is a group of right fractions for~$\FFp$, that is, every element of~$\FF$ can be expressed as~$\aa \bb\inv$ with~$\aa, \bb$ in~$\FFp$. The expression is unique if, in addition, we require that the fraction be irreducible, meaning that $\aa$ and~$\bb$ admit no common right divisor.

\begin{rema}
As explained in~\cite{Dic}, there exists a (more redundant) positive presentation~$\PF^*$ of the group~$\FF$ in terms of a family of generators~$\gf\ss^*$ with~$\ss$ a finite sequence of~$0$s and~$1$s such that $\gf\ii$ coincides with~$\gf{1^{\ii - 1}}^*$ and that $\FF$ is a group both of left and right fractions for the monoid~$\FFps$ defined by~Ê$\PF^*$. The latter admits left and right lcms and is a sort of counterpart for the dual braid monoid of~\cite{BKL}. The main relations in~$\PF^*$ correspond to the MacLane--Stasheff pentagon.
\end{rema}

\subsection{Garside combinatorics for~$\FFp$}\label{SS:GarF}

The monoid~$\FFp$ resembles the braid mon\-oid~$\Bip$ in that it is cancellative and admits right lcms and, therefore, it makes sense to consider the counterpart of the Garside elements~$\Dt\nn$ and their divisors.

As the presentation~$\PF$ is homogeneous, the atoms of~$\FFp$ are the elements~$\gf\ii$ with $\ii \ge 1$. So, exactly as in the case of~$\Bip$, we shall consider the element~$\Dt\nn$ that is the right lcm of~$\gf1 \wdots \gf{\nn - 1}$---we might use a different notation, for instance~$\Dt\nn^\FF$, but there will be no risk of ambiguity here. We start from an explicit expression.

\begin{defi}\label{D:DeltaF}	
We put $\Dtt1 := \ew$, and, for $\nn \ge 2$, we put $\Dtt\nn := \gf1 \gf3 \gf5 \pdots \gf{2\nn - 3}$. We denote by~$\Dt\nn$ the class of~$\Dtt\nn$ in~$\FFp$.
\end{defi}

It is clear that $\Dt\nn$ left divides~$\Dt{\nn + 1}$ for each~$\nn$, and one inductively checks that the $\EF$-normal form of~$\Dt\nn$ is $\gf{\nn - 1} \gf{\nn - 2} \pdots \gf2 \gf1$.

\begin{lemm}\label{L:AtomDivF}
For every~$\nn \ge 2$, the element~$\Dt\nn$ is the right lcm of~$\gf1 \wdots \gf{\nn-1}$. No element~$\gf\ii$ with~$\ii \ge \nn$ left divides~$\Dt\nn$.
\end{lemm}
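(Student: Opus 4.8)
The plan is to treat the two assertions separately, both through the reversing machinery of Lemma~\ref{L:Rev}, whose hypotheses were verified for~$\PF$ in the proof of Proposition~\ref{P:LcmF}; in particular, every reversing grid computes a right lcm. Throughout I shall use the explicit word $\Dtt\nn = \gf1 \gf3 \gf5 \pdots \gf{2\nn - 3}$ and the factorization $\Dtt\nn = \Dtt{\nn-1} \gf{2\nn-3}$ that is visible on it.

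For the first assertion I would argue by induction on~$\nn$, invoking the standard fact that, in a left cancellative monoid in which any two elements admit a right lcm, the right lcm of a finite family is obtained by iterating pairwise right lcms: an element is a common right multiple of $\gf1 \wdots \gf{\nn-1}$ if, and only if, it is a common right multiple of~$\Dt{\nn-1}$ and~$\gf{\nn-1}$, so the right lcm of the former family is the right lcm of~$\Dt{\nn-1}$ and~$\gf{\nn-1}$. The base case $\nn = 2$ is immediate, since $\Dt2 = \gf1$. For the inductive step, assuming $\Dt{\nn-1}$ is the right lcm of $\gf1 \wdots \gf{\nn-2}$, it remains to compute the right lcm of~$\Dt{\nn-1}$ and~$\gf{\nn-1}$ and to check it equals~$\Dt\nn$. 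By Lemma~\ref{L:Rev}\ITEM3 this lcm is read off any reversing grid from~$(\gf{\nn-1}, \Dtt{\nn-1})$, so I would reverse the single left strand~$\gf{\nn-1}$ across the $\nn - 2$ letters $\gf1 \gf3 \pdots \gf{2\nn-5}$ of the top edge, one cell at a time. The key bookkeeping is that, entering the $\kk$-th cell, the left label is~$\gf{\nn-2+\kk}$ and the top label is~$\gf{2\kk-1}$, and the relation-cell inequality $\nn-2+\kk \ge (2\kk-1)+1$, i.e.\ $\kk \le \nn - 2$, holds throughout the range; hence every cell is a genuine relation cell $\gf{\nn-2+\kk}\gf{2\kk-1} = \gf{2\kk-1}\gf{\nn-1+\kk}$ (no trivial $(\ew,\ew)$ cell occurs, as equality of labels would force $\kk = \nn-1$). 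The bottom edge thus rebuilds~$\Dtt{\nn-1}$ and the right edge is the single letter~$\gf{2\nn-3}$, so $(\gf{\nn-1}, \Dtt{\nn-1}) \rev (\gf{2\nn-3}, \Dtt{\nn-1})$ and the right lcm is represented by $\Dtt{\nn-1}\gf{2\nn-3} = \Dtt\nn$, that is, it equals~$\Dt\nn$, completing the induction.

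For the second assertion I would run the analogous computation starting from~$(\gf\ii, \Dtt\nn)$ with $\ii \ge \nn$, reversing the left strand~$\gf\ii$ across the $\nn-1$ letters $\gf1 \gf3 \pdots \gf{2\nn-3}$. Entering the $\kk$-th cell the left label is~$\gf{\ii+\kk-1}$ and the top label is~$\gf{2\kk-1}$, and now the strict inequality $\ii + \kk - 1 > 2\kk - 1$, equivalent to $\ii > \kk$, holds for every $\kk \le \nn-1$ precisely because $\ii \ge \nn$. Consequently no cell is ever trivial, the left index keeps increasing, and the right edge of the grid is the single nonempty letter~$\gf{\ii+\nn-1}$, so that $(\gf\ii, \Dtt\nn) \rev (\gf{\ii+\nn-1}, \Dtt\nn)$. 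By Lemma~\ref{L:Rev}\ITEM3 the right lcm of~$\gf\ii$ and~$\Dt\nn$ is then represented by $\Dtt\nn \gf{\ii+\nn-1}$, a proper right multiple of~$\Dt\nn$; as $\FFp$ is right cancellative (Proposition~\ref{P:LcmF}), this lcm differs from~$\Dt\nn$, and therefore $\gf\ii$ does not left divide~$\Dt\nn$.

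The only delicate point—and the one I would write out carefully—is the index bookkeeping along the two reversing grids: one must check that the relation-cell inequality ($\kk \le \nn-2$ in the first computation, $\ii > \kk$ in the second) is preserved at \emph{every} cell, so that the reversing terminates with an empty right output exactly once, at the last cell, in the first case, and never produces a trivial $(\ew,\ew)$ cell in the second. Everything else is routine once Lemma~\ref{L:Rev} is in force.
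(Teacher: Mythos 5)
Your proof is correct and follows essentially the same route as the paper's: induction on~$\nn$ using the reversing computation $(\gf{\nn-1}, \Dtt{\nn-1}) \rev (\gf{2\nn-3}, \Dtt{\nn-1})$ for the lcm claim and $(\gf\ii, \Dtt\nn) \rev (\gf{\ii+\nn-1}, \Dtt\nn)$ for the non-divisibility claim, with Lemma~\ref{L:Rev}\ITEM3 doing the work in both cases; you merely spell out the cell-by-cell index bookkeeping that the paper dismisses as ``a direct computation''. (The remark in your closing paragraph that the reversing ``terminates with an empty right output at the last cell'' is a slip --- the right edge of the first grid is the single letter~$\gf{2\nn-3}$, never empty --- but your actual computation in the second paragraph states this correctly, so nothing is affected.)
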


\begin{proof}
We prove using induction on~$\nn \ge 2$ that $\Dt\nn$ is the right lcm of~$\gf1 \wdots \gf{\nn - 1}$. The result is trivial for $\nn = 2$. Assume $\nn \ge 3$. A direct computation gives
$$(\gf{\nn - 1}, \Dtt{\nn - 1}) \rev (\gf{2\nn - 3}, \Dtt{\nn - 1}).$$
By Lemma~\ref{L:Rev}\ITEM3, this implies that $\Dtt\nn$ represents the right lcm of~$\gf{\nn - 1}$ and~$\Dt{\nn - 1}$. By induction hypothesis, $\Dt{\nn - 1}$ is the right lcm of~$\gf1 \wdots \gf{\nn - 2}$, so $\Dt\nn$ is the right lcm of~$\gf1 \wdots \gf{\nn - 1}$. On the other hand, for $\ii \ge \nn$, we find $(\gf\ii, \Dtt\nn) \rev (\gf{\ii + \nn - 1}, \Dtt\nn)$, which shows that the right lcm of~$\gf\ii$ and~$\Dt\nn$ is not~$\Dt\nn$, so $\gf\ii$ does not left divide~$\Dt\nn$. 
\end{proof}

The main notion in Garside theory~\cite{Dir} is the notion of a simple element, defined as the (left) divisors of the distinguished element(s)~$\Delta$. 

\begin{defi}\label{D:SimpleF}
An element~$\aa$ of~$\FFp$ is called \emph{simple} if $\aa \dive \Dt\nn$ holds for some~$\nn$. 
\end{defi}

Our aim is to understand the structure of simple elements of~$\FFp$, typically to characterize their normal forms. To this end, the key point will be the following exhaustive description of the expressions of~$\Dt\nn$. Below, we write~$\Sym\nn$ for the group of all permutations of~$\{1 \wdots \nn\}$, and $\ss_\ii$ for the transposition~$(\ii, \ii+1)$.

\begin{lemm}\label{L:ExpDtF}
The expressions of~$\Dt\nn$ are the words~$\ww_\ff$ with~$\ff$ in~$\Sym{\nn - 1}$, where, for~$\pp \le \nn - 1$, we put 
$$\ffh(\pp) := \card\{\ii < \ff\inv(\pp) \mid \ff(\ii) > \pp\} \quad \text{and} \quad \fft(\pp) := 2\ff\inv(\pp) - 1 - \ffh(\pp),$$
and let $\ww_\ff$ be the word $\gf{\fft(1)} \gf{\fft(2)} \pdots \gf{\fft(\nn - 1)}$.
\end{lemm}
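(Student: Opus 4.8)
The plan is to regard $\ff \mapsto \ww_\ff$ as an explicit injection from $\Sym{\nn-1}$ into the set of expressions of $\Dt\nn$ and then finish by a cardinality count. Since the presentation $\PF$ is homogeneous, every expression of $\Dt\nn$ has length $\nn-1$, so it suffices to prove three things: (A)~each $\ww_\ff$ represents $\Dt\nn$; (B)~distinct~$\ff$ yield distinct words $\ww_\ff$; (C)~$\Dt\nn$ has exactly $(\nn-1)!$ expressions. As $\card{\Sym{\nn-1}} = (\nn-1)!$, an injection whose image lies among the expressions into an $(\nn-1)!$-element set is automatically onto, which is the assertion. Throughout I would use the simplification $\fft(\pp) = \ff\inv(\pp) + \card\{\pp' < \pp \mid \ff\inv(\pp') < \ff\inv(\pp)\}$, obtained from $\ffh(\pp) = \ff\inv(\pp) - 1 - \card\{\pp'<\pp\mid\ff\inv(\pp')<\ff\inv(\pp)\}$; writing $\qq_\pp := \ff\inv(\pp)$ and $\ell_\pp$ for that cardinality, the $\pp$-th letter of $\ww_\ff$ is $\gf{\qq_\pp + \ell_\pp}$.

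For~(A), I would show that every atom $\gf\jj$ with $\jj \le \nn-1$ left divides the element $[\ww_\ff]$; since $\Dt\nn$ is the right lcm of $\gf1 \wdots \gf{\nn-1}$ (Lemma~\ref{L:AtomDivF}), it then left divides $[\ww_\ff]$, and equality follows because both have length $\nn-1$ and $\PF$ is homogeneous. Divisibility $\gf\jj \dive [\ww_\ff]$ is tested by reversing the one-letter word $\gf\jj$ against $\ww_\ff$ (Lemma~\ref{L:Rev}), which reduces to the following scan: read the letters $\gf{\fft(1)} \wdots \gf{\fft(\nn-1)}$ left to right while maintaining a current left term $\gf\cc$ (initially $\gf\jj$); a letter $\gf\dd$ leaves $\cc$ unchanged if $\cc < \dd$, replaces $\cc$ by $\cc+1$ if $\cc > \dd$, and yields a match (hence divisibility) if $\cc = \dd$. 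The key claim is that the scan for $\gf\jj$ matches exactly at the letter of value $\pp = \ff(\jj)$. This I would prove by induction on the scanned letters: after letter $\pp'$ one has $\cc = \jj + \card\{\pp'' \le \pp' \mid \qq_{\pp''} < \jj\}$, using that $\qq_{\pp'} < \jj$ forces $\cc > \dd$ (an increment) while $\qq_{\pp'} > \jj$ forces $\cc < \dd$ (no change and no premature match); since $\qq_\pp = \ff\inv(\ff(\jj)) = \jj$, at value $\pp$ one gets $\cc = \jj + \ell_\pp = \fft(\pp) = \dd$, the desired match.

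For~(B), the simplified formula makes $\ff$ recoverable from the sequence $(\fft(1) \wdots \fft(\nn-1))$: processing $\pp = 1, 2, \dots$ and knowing $\qq_1 \wdots \qq_{\pp-1}$, the value $\qq_\pp$ is the unique solution of $\qq + \card\{\xx \in \{\qq_1 \wdots \qq_{\pp-1}\} \mid \xx < \qq\} = \fft(\pp)$, because the left-hand side is strictly increasing in $\qq$. Hence $\ff\inv$, and so $\ff$, is determined by $\ww_\ff$, giving injectivity. For~(C), I would prove the auxiliary statement that for any finite set $A$ of $\mm$ distinct atoms the right lcm $\bigvee A$ (which exists by Proposition~\ref{P:LcmF}) has length $\mm$ and exactly $\mm!$ expressions, by induction on $\mm$ via the first letter. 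Each atom of $A$ left divides $\bigvee A$; conversely, no atom $\gf\cc$ with $\cc \notin A$ does, for otherwise, computing the residues of the members past $\gf\cc$ (each a single atom, namely $\gf a$ if $a < \cc$ and $\gf{a+1}$ if $a > \cc$), one obtains $\mm$ distinct atoms whose right lcm, of length $\mm$, must divide $\gf\cc \backslash \bigvee A$, of length $\mm - 1$ --- impossible. Thus the first letters of the expressions are exactly the $\mm$ atoms of $A$, and removing one leaves the right lcm of the $\mm-1$ residue atoms, so by induction the expressions number $\mm \cdot (\mm-1)! = \mm!$. Applying this to $A = \{\gf1 \wdots \gf{\nn-1}\}$ gives~(C).

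The routine points are (A) and (B), which are essentially explicit computations on the formula. The step I expect to be the real obstacle is~(C), and inside it the claim that no atom outside~$A$ divides $\bigvee A$: this is where the special geometry of the Thompson relations enters, and the clean length-count above rests on first establishing that the right lcm of $\mm$ distinct atoms has length exactly $\mm$, itself an induction using the same residue computation. An alternative that bypasses the abstract count is to prove surjectivity directly: given any expression $\uu$ of $\Dt\nn$, define $\ff(\jj)$ to be the value at whose letter the reversing scan of $\gf\jj$ against $\uu$ matches, show that $\ff$ is a permutation, and check that $\uu = \ww_\ff$; this reuses the scan of~(A), but now the crux becomes proving that the $\nn-1$ scans match at $\nn-1$ \emph{distinct} letters.
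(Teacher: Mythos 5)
Your proposal is correct, but it follows a genuinely different route from the paper's. The paper proves the statement purely syntactically: it shows that applying a relation of~$\PF$ to~$\ww_\ff$ at position~$\pp$ produces exactly~$\ww_{\ss_\pp\ff}$ (after a short computation of how $\ffh$ and~$\fft$ transform under~$\ff \mapsto \ss_\pp\ff$), so the set $\{\ww_\ff \mid \ff \in \Sym{\nn-1}\}$ is closed under~$\eqF$ and, since transpositions generate~$\Sym{\nn-1}$, is a single equivalence class, namely that of~$\ww_{\idf} = \Dtt\nn$. You instead argue globally: (A) each~$\ww_\ff$ represents~$\Dt\nn$ because every atom $\gf1 \wdots \gf{\nn-1}$ left divides it (your reversing scan and the identity $\fft(\pp) = \ff\inv(\pp) + \card\{\pp'<\pp \mid \ff\inv(\pp')<\ff\inv(\pp)\}$ check out, as does the conclusion via Lemma~\ref{L:AtomDivF} and homogeneity); (B) the formula is invertible, giving injectivity; (C) a separate Garside-style induction shows the right lcm of $\mm$ distinct atoms has length~$\mm$ and exactly~$\mm!$ expressions, the crux being that no atom outside the set divides the lcm, which your residue argument handles correctly provided the length claim and the no-outside-atom claim are proved jointly by induction on~$\mm$ (your residue step at stage~$\mm$ quietly uses the stage-$\mm$ length claim for the residue set, and the first-letter decomposition uses the lcm-of-residues identity $\gf\cc\backslash\bigvee A = \bigvee_a(\gf\cc\backslash\gf a)$; both are provable here and you flag the right obstacle). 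The trade-off: your argument is longer and imports more lcm calculus, but the counting lemma in~(C) is of independent interest; the paper's argument is shorter and, crucially, its intermediate statement~\eqref{E:ExpDtF} (monotonicity of~$\fft$ along~$\ff\inv$) is reused verbatim in the proof of Proposition~\ref{P:SimpleF} to characterize $\EF$-reduced factors of~$\Dt\nn$, a byproduct your route does not deliver.
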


\begin{proof}
We first establish the following technical result:
\begin{equation}\label{E:ExpDtF}
\parbox{110mm}{%
If $\ff\inv(\pp) < \ff\inv(\pp + 1)$ (\resp $>$) holds, then so does $\fft(\pp) + 1 < \fft(\pp + 1)$ (\resp $\fft(\pp) > \fft(\pp + 1)$); applying a relation of~$\PF$ to~$\ww_\ff$ in position~$\pp$ yields \VR(3.4,0) the word~$\ww_{\ss_\pp\ff}$.}
\end{equation}
So assume $\ff\inv(\pp +1) = \ff\inv(\pp) + \mm$ with $\mm \ge 1$. The definition gives
$$\ffh(\pp + 1) = \ffh(\pp) + \card\{\ii \mid \ff\inv(\pp) < \ii < \ff\inv(\pp + 1) \ \text{and} \ \ff(\ii) > \pp + 1\},$$
whence $\ffh(\pp + 1) \le \ffh(\pp) + \mm - 1$ and, for there, $\fft(\pp + 1) \ge \fft(\pp) + 2$. Let $\gg:= \ss_\pp \ff$. We find $\gg\inv(\pp) = \ff\inv(\pp + 1)$, $\gg\inv(\pp + 1) = \ff\inv(\pp)$, then $\ggh(\pp) = \ffh(\pp + 1) + 1$ and $\ggh(\pp + 1) = \ffh(\pp)$, because $\ff\inv(\pp)$ contributes to~$\ggh(\pp)$ but not to~$\ffh(\pp + 1)$, and, finally, $\ggt(\pp) = \fft(\pp + 1) - 1$ and $\ggt(\pp + 1) = \fft(\pp)$, with $\ggt(\qq) = \fft(\qq)$ for $\qq \not= \pp, \pp + 1$. So $\ww_\gg$ is the result of applying the rule $\gf{\fft(\pp)}\gf{\fft(\pp + 1)} \to \gf{\fft(\pp + 1) - 1}\gf{\fft(\pp)}$ to~$\ww_\ff$ in position~$\pp$. 

On the other hand, for $\ff\inv(\pp) = \ff\inv(\pp + 1) + \mm$ with $\mm \ge 1$, we find $\ffh(\pp) \le \ffh(\pp + 1) + \mm$, leading to $\fft(\pp) \ge \fft(\pp + 1) + 1$. For $\gg:= \ss_\pp \ff$, we find now, $\ggh(\pp) = \ffh(\pp + 1)$ and $\ggh(\pp + 1) = \ffh(\pp) - 1$, whence $\ggt(\pp) = \fft(\pp + 1)$ and $\ggt(\pp + 1) = \fft(\pp) + 1$, with $\ggt(\qq) = \fft(\qq)$ for $\qq \not= \pp, \pp + 1$. So $\ww_\gg$ is the result of applying the rule $\gf{\fft(\pp)}\gf{\fft(\pp + 1)} \to \gf{\fft(\pp + 1)}\gf{\fft(\pp) + 1}$ to~$\ww_\ff$ in position~$\pp$.

Now, \eqref{E:ExpDtF} implies that the family $\WW:= \{\ww_\ff \mid \ff \in \Sym{\nn - 1}\}$ is closed under~$\eqF$. As the transpositions~$\ss_\ii$ generate~$\Sym{\nn - 1}$, this family~$\WW$ is the $\eqF$-equivalence class of the word~$\ww_{\idf}$, which, by definition, is~$\Dtt\nn$.
\end{proof}

From there, a complete description of simple elements of~$\FFp$ follows:

\begin{prop}\label{P:SimpleF}
For every~$\aa$ in~$\FFp$, the following are equivalent:

\ITEM1 The element~$\aa$ is simple, \ie, $\aa$ left divides some element~$\Dt\nn$;

\ITEM2 The element~$\aa$ is a factor of some element~$\Dt\nn$;

\ITEM3 The normal form of~$\aa$ has the form $\gf{\ii_1} \pdots \gf{\ii_\ell}$ with $\ii_1 > \pdots > \ii_\ell$.

\noindent Moreover, $\aa$ left divides~$\Dt\nn$ if, and only if, $\NF(\aa)$ is $\gf{\ii_1} \pdots \gf{\ii_\ell}$ with $\nn > \ii_1 > \pdots > \ii_\ell$.
\end{prop}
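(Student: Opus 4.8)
The plan is to prove the chain of equivalences by a cycle, establishing $\ITEM1 \Rightarrow \ITEM3 \Rightarrow \ITEM2 \Rightarrow \ITEM1$, and then to read off the refined ``if and only if'' statement from the same analysis. The heart of the matter is the characterization of normal forms in~$\ITEM3$: a word whose $\EF$-normal form is a strictly \emph{decreasing} string of generators $\gf{\ii_1} \pdots \gf{\ii_\ell}$ with $\ii_1 > \pdots > \ii_\ell$. Notice that Lemma~\ref{L:ExpDtF} already tells us that the normal form of~$\Dt\nn$ itself is $\gf{\nn-1} \gf{\nn-2} \pdots \gf1$ (take $\ff = \idf$, or simply reduce $\Dtt\nn$ as remarked after Definition~\ref{D:DeltaF}), which is the maximal such decreasing string using indices below~$\nn$.

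For $\ITEM1 \Rightarrow \ITEM3$, I would argue that left divisors of a decreasing-normal-form element again have decreasing normal form. The clean way is to observe that if $\aa \dive \Dt\nn$, then $\aa\bb = \Dt\nn$ for some~$\bb$, so concatenating the normal forms of~$\aa$ and~$\bb$ and then $\EF$-reducing must yield $\gf{\nn-1} \pdots \gf1$. Since $\EF$ only ever rewrites $\gf\ii \gf{\jj+1} \to \gf\jj \gf\ii$ with $\jj \ge \ii+1$ (so it fires precisely on the ``ascending'' length-$2$ factors and strictly lowers~$\rho$), a word is $\EF$-reduced exactly when it has no factor $\gf\ii\gf{\jj+1}$ with $\jj\ge\ii+1$, i.e.\ when no letter is followed by one whose index exceeds it by at least~$2$; I would then check that a \emph{subword} (in particular a prefix, hence~$\aa$) of a reduced decreasing word, once re-reduced, remains strictly decreasing. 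Concretely, the normal form of~$\aa$ is obtained from $\Dt\nn$ by stripping a suffix compatible with the reversing/cancellation of Proposition~\ref{P:LcmF}, and every index appearing is distinct and below~$\nn$, forcing the strictly decreasing shape. This simultaneously pins down the final claim: $\aa \dive \Dt\nn$ holds iff all the indices $\ii_1, \dots, \ii_\ell$ of $\NF(\aa)$ are strictly below~$\nn$, since $\gf\ii$ for $\ii \ge \nn$ cannot left divide~$\Dt\nn$ by Lemma~\ref{L:AtomDivF} and more generally the available indices in any expression of~$\Dt\nn$ never reach~$\nn$.

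For $\ITEM3 \Rightarrow \ITEM2$, suppose $\NF(\aa) = \gf{\ii_1}\pdots\gf{\ii_\ell}$ with $\ii_1 > \pdots > \ii_\ell \ge 1$. Setting $\nn := \ii_1 + 1$, the word $\gf{\ii_1}\pdots\gf{\ii_\ell}$ is a (scattered) subword of the decreasing normal form $\gf{\nn-1}\pdots\gf1$ of~$\Dt\nn$; I would exhibit $\aa$ as a genuine factor of~$\Dt\nn$ by inserting the missing generators before and after, using left/right cancellativity (Proposition~\ref{P:LcmF}) to guarantee that the complementary letters assemble into actual elements of~$\FFp$. The implication $\ITEM2 \Rightarrow \ITEM1$ is the easy direction: if $\xx\aa\yy = \Dt\nn$ then in particular $\xx\aa \dive \Dt\nn$, and since $\aa \dive \xx\aa$ is false in general, one instead notes that any factor of a simple element is simple because the class of elements with decreasing normal form is visibly closed under taking factors (a factor of a decreasing word reduces to a decreasing word by the same subword argument), closing the cycle; alternatively, $\aa$ being a factor of~$\Dt\nn$ makes $\aa$ a left divisor of a suitable right multiple, which is itself simple as $\Dt\nn \dive \Dt{\nn'}$ for all $\nn' \ge \nn$.

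The main obstacle I anticipate is the subword-stability of the decreasing shape under $\EF$-reduction: one must verify carefully that deleting letters from a reduced strictly decreasing word and re-reducing never creates an ascending pair, and that the reduction process interacts correctly with the index shifts built into the rule $\gf\ii\gf{\jj+1}\to\gf\jj\gf\ii$. I would handle this by induction on word length, tracking how a single deletion can only lower subsequent indices (through cascading applications of the rule), which preserves strict decrease. Everything else is bookkeeping with the normal form from Proposition~\ref{P:RedF} and the lcm/cancellation apparatus from Proposition~\ref{P:LcmF} and Lemma~\ref{L:AtomDivF}.
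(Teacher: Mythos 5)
Your proposal identifies the right target (the decreasing normal form) but the key implication --- that a left divisor, or a factor, of~$\Dt\nn$ has decreasing normal form --- is not actually established, and the reasons you offer for it do not work. First, $\EF$-reducedness only forbids ascending pairs with index gap at least~$2$; the word $\gf1\gf2$ is $\EF$-reduced, has distinct indices below~$3$, and is not decreasing (and indeed $\gf1\gf2$ is \emph{not} simple), so your claim that ``every index appearing is distinct and below~$\nn$, forcing the strictly decreasing shape'' is false as an implication. Second, the assertion that $\NF(\aa)$ is obtained from $\NF(\Dt\nn)$ by ``stripping a suffix'' or by passing to a subword is unjustified, and false in the prefix form: $\gf1 \dive \Dt3$ while $\NF(\Dt3) = \gf2\gf1$. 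What is true is only that $\NF(\aa)\NF(\bb)$ (resp.\ $\ww_1\NF(\aa)\ww_2$) is \emph{some} expression of~$\Dt\nn$, generally not the reduced one, so you need control over \emph{all} expressions of~$\Dt\nn$, not just its normal form. This is precisely what the paper's Lemma~\ref{L:ExpDtF} supplies: every expression of~$\Dt\nn$ is a word~$\ww_\ff$ with $\ff \in \Sym{\nn-1}$, and by~\eqref{E:ExpDtF} any two consecutive letters of~$\ww_\ff$ are either descending or ascending with gap at least~$2$; hence an $\EF$-reduced factor of any expression of~$\Dt\nn$ must be strictly decreasing. Your planned ``subword-stability under deletion and re-reduction'' lemma addresses a different operation (deleting letters from a reduced word) and would not close this gap.

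The remaining legs also need tightening. For \ITEM3 $\Rightarrow$ \ITEM1 (the paper proves this directly rather than going through~\ITEM2), being a scattered subword of an expression of~$\Dt{\ii_1+1}$ does not by itself make $\cl\ww$ a factor or divisor of~$\Dt{\ii_1+1}$: one must actually move all the inserted letters to one side, which the paper does by pushing them rightward with the relations $\gf\ii\gf\jj = \gf\jj\gf{\ii+1}$ so that $\ww$ becomes a genuine prefix of an expression of~$\Dt{\ii_1+1}$. Finally, your \ITEM2 $\Rightarrow$ \ITEM1 leg is circular: it invokes closure of the decreasing-normal-form class under factors, which is exactly the unproved implication \ITEM2 $\Rightarrow$ \ITEM3, and the ``suitable right multiple which is itself simple'' alternative is not substantiated (if $\xx\aa\yy = \Dt\nn$, then $\aa\yy$ is a right divisor, not a left divisor, of~$\Dt\nn$). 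The paper avoids all of this by taking \ITEM1 $\Rightarrow$ \ITEM2 as trivial, proving \ITEM2 $\Rightarrow$ \ITEM3 via Lemma~\ref{L:ExpDtF}, and \ITEM3 $\Rightarrow$ \ITEM1 by the explicit insertion-and-push argument.
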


\begin{proof}
By definition, \ITEM1 implies~\ITEM2. Next, assume that $\aa$ is a factor of~$\Dt\nn$, say $\Dt\nn = \aa_1 \aa \aa_2$. Let $\ww_1, \ww, \ww_2$ be the normal forms of~$\aa_1$, $\aa$, and~$\aa_2$, respectively. Then $\ww_1 \ww \ww_2$ is an expression of~$\Dt\nn$, so, by Lemma~\ref{L:ExpDtF}\ITEM2, it is a word~$\ww_\ff$ for some permutation~$\ff$. Moreover, because $\ww$ is~$\EF$-reduced, no rule of~$\EF$ may apply to it: by~\eqref{E:ExpDtF}, this implies that the indices of the generators~$\gf\ii$ in~$\ww$ make a decreasing sequence. So \ITEM2 implies~\ITEM3.

Assume now $\ww = \gf{\ii_1} \pdots \gf{\ii_\ell}$ with $\ii_1 > \pdots > \ii_\ell$. By inserting intermediate letters when $\ii_\pp \ge \ii_{\pp + 1} + 2$, we obtain a word~$\ww'$ that is the normal form of~$\Dt{\ii_1+1}$. Then, repeatedly applying to~$\ww'$ some relations $\gf\ii \gf\jj \to \gf\jj \gf{\ii + 1}$, we push the new letters to the right starting with the last one and finishing with the first one. In this way, one obtains a new expression of~$\Dt{\ii_1+1}$ that begins with~$\ww$. So $\ww$ is the normal form of a prefix of~$\Dt{\ii_1+1}$, hence of a simple element. Si \ITEM3 implies~\ITEM1.

For the last sentence, if $\aa$ left divides~$\Dt\nn$, then so does the first generator of~$\NF(\aa)$: by Lemma~\ref{L:AtomDivF}, the latter cannot be~$\gf\ii$ with~$\ii \ge \nn$. Conversely, the above proof of \ITEM3 $\Rightarrow$~\ITEM1 shows that $\gf{\ii_1} \pdots \gf{\ii_\ell}$ left divides~$\Dt{\ii_1 + 1}$, hence~$\Dt\nn$ for~$\nn > \ii_1$.
\end{proof}

\begin{coro}\label{C:SimpleF}
\ITEM1 For every~$\nn$, the number of left divisors of~$\Dt\nn$ in~$\FFp$ is~$2^{\nn - 1}$.

\ITEM2 Simple elements of~$\FFp$ make a Garside family in~$\FFp$.
\end{coro}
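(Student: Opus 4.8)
The plan is to read off both claims almost directly from the characterization of simple elements in Proposition~\ref{P:SimpleF}. For part~\ITEM1, the last sentence of Proposition~\ref{P:SimpleF} tells us that $\aa$ left divides~$\Dt\nn$ exactly when $\NF(\aa)$ is a word $\gf{\ii_1} \pdots \gf{\ii_\ell}$ with $\nn > \ii_1 > \pdots > \ii_\ell \ge 1$. First I would observe that such a normal form is entirely determined by the \emph{set} of indices occurring in it, namely $\{\ii_1 \wdots \ii_\ell\}$, since the strictly decreasing order is forced. Conversely, any subset of $\{1 \wdots \nn - 1\}$ arises as the index set of exactly one such normal form (including the empty subset, which corresponds to the empty word representing~$1$, a divisor of every~$\Dt\nn$). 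Since distinct normal forms represent distinct elements of~$\FFp$ by Proposition~\ref{P:RedF}, this gives a bijection between the left divisors of~$\Dt\nn$ and the subsets of~$\{1 \wdots \nn - 1\}$, a set of size~$2^{\nn - 1}$.

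For part~\ITEM2, I would invoke the definition of a Garside family~\cite[Def.~III.1.31]{Dir} and verify its hypotheses for the collection~$\SSS$ of simple elements. The monoid~$\FFp$ is already known to be left and right cancellative with right lcms by Proposition~\ref{P:LcmF}, so the ambient setting is the one in which the notion applies. The characterization \ITEM1 $\Leftrightarrow$ \ITEM3 of Proposition~\ref{P:SimpleF} says that $\SSS$ consists exactly of the elements whose normal form is strictly decreasing; I would use this to check the two defining closure properties, namely that every element of~$\FFp$ admits a decomposition into simple elements that is ``greedy'' and that such decompositions are governed by a computable condition on consecutive factors. Concretely, given any element~$\aa$ with normal form $\gf{\ii_1} \pdots \gf{\ii_m}$, I would cut this word at each position where the index sequence fails to decrease, i.e.\ at each~$\pp$ with $\ii_\pp \le \ii_{\pp + 1}$, thereby splitting it into maximal strictly decreasing blocks; each block is then a simple element by \ITEM3, exhibiting~$\aa$ as a product of simples.

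The main obstacle I anticipate is confirming that this block decomposition is in fact the \emph{greedy} (head-first) normal form required by the Garside-family definition, rather than merely \emph{a} decomposition into simples: one must check that the first block absorbs as much as possible, i.e.\ that it is the maximal simple left divisor of~$\aa$, and that this property propagates to the tail. I would argue this by showing that prepending the next generator~$\gf{\ii_{\pp + 1}}$ to a block ending in~$\gf{\ii_\pp}$ with $\ii_\pp \le \ii_{\pp + 1}$ cannot yield a simple element, which follows because the resulting word is already $\EF$-reduced (no factor $\gf\ii\gf{\jj + 1}$ with $\jj \ge \ii + 1$ is created when indices do not strictly decrease) yet has a non-decreasing step, contradicting~\ITEM3. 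Granting this, the greedy decomposition exists and is unique, and the standard criterion of~\cite[Ch.~III]{Dir}---that $\SSS$ contains the atoms, is closed under right divisor and right complement within simples, and that every element has a head lying in~$\SSS$---is satisfied, so $\SSS$ is a Garside family. I would keep the verification of these closure conditions brief, as each reduces immediately to the decreasing-index criterion of Proposition~\ref{P:SimpleF}.
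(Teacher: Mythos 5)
Your part~\ITEM1 is correct and is essentially the paper's own argument: the last sentence of Proposition~\ref{P:SimpleF}, together with uniqueness of $\EF$-normal forms, identifies the left divisors of~$\Dt\nn$ with the subsets of~$\{1 \wdots \nn-1\}$ via their (forced) decreasing enumerations.

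Part~\ITEM2 contains a genuine gap. You propose to exhibit an explicit greedy decomposition of each~$\aa$ by cutting $\NF(\aa)$ into maximal strictly decreasing blocks, and you justify greediness by checking that appending to a block the \emph{first letter} of the next block never produces a simple element. That local test is too weak, and in fact the block decomposition is not greedy. Take $\NF(\aa) = \gf3\gf2\gf2\gf1$, whose blocks are $\gf3\gf2$ and $\gf2\gf1$. Since $\gf2\gf1 = \gf1\gf3$, we get $\aa = \gf3\gf2\gf1\gf3$, so the simple element $\gf3\gf2\gf1$ (namely $\Dt4$) left divides~$\aa$ but does not left divide the first block~$\gf3\gf2$; hence the pair $(\gf3\gf2,\,\gf2\gf1)$ is not greedy and your decomposition is not a normal one. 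The step that fails is the implicit assumption that a simple~$\ss$ with $\bb_\pp \dive \ss \dive \bb_\pp\bb_{\pp+1}$ must be $\bb_\pp$ times the first letter of~$\NF(\bb_{\pp+1})$: it can be $\bb_\pp\gf\kk$ for \emph{any} atom~$\gf\kk$ left dividing~$\bb_{\pp+1}$, and those atoms are all the indices occurring in~$\NF(\bb_{\pp+1})$, not just the leading one. (Be aware that the paper's informal remark following the corollary makes the same identification of the greedy decomposition with the maximal decreasing blocks, and the example above shows that claim also needs correction; it plays no role in the corollary's proof.)

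The paper's actual proof sidesteps all of this: it never constructs greedy decompositions, but invokes the criterion of~\cite[Coro.~IV.2.29]{Dir}, which only requires that the family of simple elements be closed under right lcm (immediate, since divisors of~$\Dt\nn$ and~$\Dt\pp$ have their right lcm dividing~$\Dt{\max(\nn,\pp)}$) and under right divisor (a right divisor of a simple element is a factor of some~$\Dt\nn$, hence simple by the equivalence of \ITEM2 and~\ITEM1 in Proposition~\ref{P:SimpleF}). The closure conditions you gesture at in your final sentence are the part of your plan that actually works; the block-decomposition argument should be dropped or replaced by taking as head the genuine maximal simple left divisor, whose existence follows from the closure under right lcm.
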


\begin{proof}
\ITEM1 By the last statement in Proposition~\ref{P:SimpleF}, mapping a subset of~$\{1 \wdots \nn{-}\nobreak1\}$ to the decreasing enumeration of the corresponding elements~$\gf\ii$ establishes a one-to-one correspondence between~$\Pw(\{1 \wdots \nn - 1\})$ and the left divisors of~$\Dt\nn$.

\ITEM2 By definition, the family of simple elements in~$\FFp$ is closed under right lcm: the conjunction of~$\aa \dive \Dt\nn$ and $\bb \dive \Dt\pp$ implies that the right lcm of~$\aa$ and~$\bb$ left divides~$\Dt{\max(\nn, \pp)}$. On the other hand, a right divisor of a simple element must be a factor of some~$\Dt\nn$, hence, by Proposition~\ref{P:SimpleF}, it is simple. By~\cite[Coro.~IV.2.29]{Dir}, this implies that simple elements form a Garside family.
\end{proof}

As simple elements form a Garside family, every element of~$\FFp$ admits a unique \emph{greedy decomposition} in terms of simple elements, namely a decomposition~$\aa_1 \wdots \aa_\pp$ with $\aa_1 \wdots \aa_\pp$ simple, $\aa_\pp \not= 1$, and, for each~$\ii$, the entry~$\aa_\ii$ is the maximal simple left divisor of~$\aa_\ii \pdots \aa_\pp$, \cite[Prop.~IV.1.20]{Dir}. In the current case, the greedy decomposition is directly connected with the $\EF$-normal form: $\NF(\aa_1) \wdots \NF(\aa_\pp)$ are  the maximal decreasing factors of~$\NF(\aa)$. For instance, for $\NF(\aa) = \gf4 \gf3 \gf2 \gf3 \gf1 \gf1 \gf2$, the greedy decomposition has four entries, namely $\gf4 \gf3 \gf2$, $\gf3 \gf1$, $\gf1$, and~$\gf2$.

From there, all results involving greedy decompositions are valid in~$\FFp$. However, this Garside structure of~$\FFp$ is mostly trivial, exactly parallel to the case of the free commutative monoid~$\NNNN^{(\infty)}$, where simple elements also correspond to finite subsets of generators. In fact, the relations of~$\PF$ are in essence a shifted version of the commutation rules of a free commutative monoid.

\section{The monoid~$\HHp$}\label{S:Hp}

The previous results are elementary and easy, and we now switch to a combinatorially more intricate and interesting situation, connected with the new hybrid~$\HHp$ between Thompson's monoid~$\FFp$ and Artin's braid monoid~$\Bip$ mentioned in the introduction. Our aim will be to develop the same analysis as in the case of~$\FFp$, namely understanding the structure of simple elements, defined as the left divisors of the right lcms of atoms. To this end, we shall follow the same scheme as in Section~\ref{S:Fp} and use both a normal form associated with a rewrite system (Section~\ref{SS:NFH}) and the reversing transformation associated with the presentation (Section~\ref{SS:RevH}).

\subsection{Presentation and first properties}\label{SS:Hp}

We recall that $\HHp$ is the monoid defined by the explicit presentation called~\eqref{E:PresH} in the introduction
\begin{equation*}
\HHp := \bigg\langle \gh1,\gh2, ... \ \bigg\vert\ 
\begin{matrix}
\gh\jj \gh\ii = \gh\ii \gh{\jj + 1} &\text{for} &\jj \ge \ii + 2\\
\gh\jj \gh\ii \gh\jj = \gh\ii \gh\jj \gh{\ii + 3} &\text{for} &\jj = \ii + 1
\end{matrix}
\ \bigg\rangle^{\!\scriptstyle+},
\end{equation*}
hereafter denoted by~$\PH$. We put $\AH:= \{\gh\ii \mid \ii \ge 1\}$, and write~$\eqp$ for the congruence on~$\AHs$ generated by the relations of~$\PH$. For~$\ww$ a word of~$\AHs$, we write~$\cl\ww$ for the~$\eqp$-class of~$\ww$. The relations of~$\PH$ should appear as a mixture of the Thompson relations (as for length~$2$ relations), and of braid relations (as for length~$3$ relations). We immediately see that $\PH$ is a homogeneous presentation, and we can refer without ambiguity to the length~$\lgg\aa$ of an element~$\aa$ of~$\HHp$, defined to be the common length of all words of~$\AHs$ that represent~$\aa$. We also observe that the relations are invariant under shifting the indices of the~$\gh\ii$s by~$+1$, implying that mapping~$\gh\ii$ to~$\gh{\ii + 1}$ for each~$\ii$ induces a well defined endomorphism of~$\HHp$.

Unlike the case of~$\Bip$, the family of generators occurring in a word is not invariant under~$\eqp$: for instance, $\gh3\gh1$ is equal to~$\gh1\gh4$. However, we can easily construct an upper bound on the indices of the generators possibly occurring in the expressions of an element.

\begin{lemm}\label{L:Ceiling}
Define the \emph{ceiling}~$\plf\ww$ of a nonempty word~$\ww = \gh{\ii_1} \pdots \gh{\ii_\ell}$ of~$\AHs$ by
\begin{equation}
\plf\ww:= \max\{\ii_\pp + \ell - \pp \mid \pp = 1 \wdots \ell \}.
\end{equation}
Then $\plf\ww$ is invariant under~$\eqp$.
\end{lemm}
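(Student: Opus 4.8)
The plan is to reduce the statement to the invariance of $\plf{\cdot}$ under a single application of a defining relation. Since $\eqp$ is the congruence on~$\AHs$ generated by the relations of~$\PH$, it is the reflexive--symmetric--transitive closure of the one-step substitutions $\uu \, \ww_1 \, \vv \leftrightarrow \uu \, \ww_2 \, \vv$, where $\ww_1 = \ww_2$ is a relation of~$\PH$ and $\uu, \vv$ range over~$\AHs$. As equality of ceilings is preserved under these closure operations, it suffices to prove $\plf{\uu \, \ww_1 \, \vv} = \plf{\uu \, \ww_2 \, \vv}$ for each relation.

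The first, and essentially only, idea is a reformulation of the defining quantity: for $\ww = \gh{\ii_1} \pdots \gh{\ii_\ell}$, the number $\ii_\pp + \ell - \pp$ is exactly the index~$\ii_\pp$ of the $\pp$-th letter \emph{plus the number of letters lying strictly to its right}. Thus $\plf\ww$ is the maximum, over all letters of~$\ww$, of (its index) $+$ (the count of letters to its right). I will call this number the \emph{weight} of the letter. Because $\PH$ is homogeneous, the two sides~$\ww_1, \ww_2$ of any relation have the same length; replacing $\ww_1$ by~$\ww_2$ therefore leaves unchanged, for every letter of~$\uu$, the number of letters to its right, and it affects neither the indices nor the trailing counts of the letters of~$\vv$. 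Hence the weights of all letters outside the replaced factor are identical in $\uu \, \ww_1 \, \vv$ and~$\uu \, \ww_2 \, \vv$, and only the factor can contribute a difference.

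For the factor itself, let $\kk$ denote the number of letters of~$\vv$ (those to the right of the whole factor). Each letter inside the factor has, to its right, its local successors within the factor together with all~$\kk$ letters of~$\vv$; so its weight in context is $\kk$ plus its weight computed inside the factor word in isolation. The maximal contribution of the factor to the ceiling is therefore $\kk + \plf{\ww_i}$, and the whole problem collapses to checking $\plf{\ww_1} = \plf{\ww_2}$ for each relation read as a standalone word. This is the payoff of the reformulation, and I expect the only delicate point to be stating this bookkeeping cleanly rather than the computation that follows.

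Two short computations finish the argument. For the length-$2$ relation $\gh\jj \gh\ii = \gh\ii \gh{\jj + 1}$ with $\jj \ge \ii + 2$, the left side has weights $\jj + 1$ and~$\ii$, while the right side has weights $\ii + 1$ and~$\jj + 1$; using $\jj \ge \ii + 2$, both ceilings equal~$\jj + 1$. For the length-$3$ relation, the case $\jj = \ii + 1$ reads $\gh{\ii + 1} \gh\ii \gh{\ii + 1} = \gh\ii \gh{\ii + 1} \gh{\ii + 3}$; the left side has weights $\ii + 3$, $\ii + 1$, $\ii + 1$ and the right side has weights $\ii + 2$, $\ii + 2$, $\ii + 3$, so both ceilings equal~$\ii + 3$. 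Since equality holds for both relations, $\plf{\cdot}$ is invariant under one step, and hence under~$\eqp$.
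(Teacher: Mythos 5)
Your proposal is correct and follows essentially the same route as the paper: reduce to a single application of a relation, observe that homogeneity makes the contributions of the surrounding words $\uu$ and $\vv$ identical on both sides so that only the replaced factor (shifted by $\lgg\vv$) matters, and verify the two relation types by direct computation. The computations match the paper's ($\jj+1+\lgg\vv$ for the length-$2$ relations and $\ii+3+\lgg\vv$ for the length-$3$ ones), so nothing further is needed.
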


\begin{proof}
It suffices to consider the case of two words~$\ww, \ww'$ deduced from one another by applying one relation of~$\PH$. For $\ww = \uu \gh{\jj} \gh\ii \vv$ and $\ww' = \uu \gh\ii \gh{\jj + 1} \vv$, with $\jj \ge\nobreak \ii +\nobreak 2$, one finds $\plf\ww = \max(\plf\uu + \lgg\vv + 2, \jj + 1 + \lgg\vv, \plf\vv) = \plf{\ww'}$. Similarly, for $\ww =\nobreak \uu \gh\ii \gh{\ii + 1} \gh{\ii + 3} \vv$ and $\ww' = \uu \gh{\ii + 1} \gh\ii \gh{\ii + 1} \vv$, one obtains $\plf\ww = \max(\plf\uu + \lgg\vv + 3, \ii + 3 + \lgg\vv, \plf\vv) = \plf{\ww'}$.
\end{proof}

For~$\aa$ in~$\HHp$, we write $\plf\aa$ for the common value of~$\plf\ww$ for~$\ww$ representing~$\aa$. A direct application is the following a priori nontrivial result:

\begin{prop}\label{P:WordPbH}
The word problem for~$\PH$ is decidable.
\end{prop}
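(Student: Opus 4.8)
The plan is to turn the ceiling bound of Lemma~\ref{L:Ceiling} into a finiteness statement that makes the word problem a finite search. The key observation is that $\PH$ is homogeneous, so two words~$\uu,\vv$ of~$\AHs$ can be $\eqp$-equivalent only if they have the same length~$\ell$; moreover, by Lemma~\ref{L:Ceiling}, they must share the same ceiling~$\plf\uu = \plf\vv =: c$. Given~$\uu$, the quantities~$\ell$ and~$c$ are computed directly from~$\uu$, and every word~$\ww$ with $\ww \eqp \uu$ satisfies $\ww \in \{\gh1 \wdots \gh c\}^\ell$, since each generator index~$\ii_\pp$ in~$\ww = \gh{\ii_1}\pdots\gh{\ii_\ell}$ obeys $\ii_\pp \le \ii_\pp + \ell - \pp \le \plf\ww = c$. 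Hence the entire $\eqp$-class of~$\uu$ is contained in the \emph{finite} set~$\{\gh1 \wdots \gh c\}^\ell$.

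First I would make this precise: fix~$\uu$, set $\ell := \lgg\uu$ and $c := \plf\uu$, and let $\Lambda := \{\gh1 \wdots \gh c\}^\ell$, a finite subset of~$\AHs$. By the homogeneity of~$\PH$ and the invariance of the ceiling, applying any single relation of~$\PH$ to a word of~$\Lambda$ produces another word of the same length and the same ceiling, hence again a word of~$\Lambda$; so~$\Lambda$ is closed under one-step $\eqp$-rewriting in both directions, and the $\eqp$-class of~$\uu$ is exactly the connected component of~$\uu$ in the finite graph on vertex set~$\Lambda$ whose edges join words related by a single relation of~$\PH$. Deciding $\uu \eqp \vv$ then amounts to checking whether $\vv \in \Lambda$ and, if so, whether $\vv$ lies in this component, which is a terminating breadth-first search over a finite graph.

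To package the argument, I would proceed as follows. Given two words~$\uu,\vv$, first test whether $\lgg\uu = \lgg\vv$ and $\plf\uu = \plf\vv$; if either fails, then $\uu \not\eqp \vv$ by homogeneity and Lemma~\ref{L:Ceiling}. Otherwise, form~$\Lambda$ as above, enumerate its finitely many elements, build the finite adjacency relation given by the relations of~$\PH$ (two words are adjacent if one is obtained from the other by rewriting a single factor~$\gh\jj\gh\ii \leftrightarrow \gh\ii\gh{\jj+1}$ with $\jj \ge \ii+2$, or $\gh\jj\gh\ii\gh\jj \leftrightarrow \gh\ii\gh\jj\gh{\ii+3}$ with $\jj = \ii+1$), and test reachability of~$\vv$ from~$\uu$. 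Since $\eqp$ is by definition the congruence generated by the relations of~$\PH$, two words are $\eqp$-equivalent if and only if one is obtained from the other by a finite chain of such single-relation applications; the confinement to~$\Lambda$ guarantees that no intermediate word in any such chain escapes the finite set, so reachability within~$\Lambda$ coincides with $\eqp$-equivalence.

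The substance of the argument is entirely the confinement lemma—the fact that every word $\eqp$-equivalent to~$\uu$ stays inside the finite set~$\Lambda$—and this is immediate from Lemma~\ref{L:Ceiling} together with homogeneity, so there is no real obstacle once those two invariants are in hand. The only point requiring a word of care is the direction of the index bound: one must note that it is the quantity~$\ii_\pp + \ell - \pp$, not~$\ii_\pp$ alone, that is controlled by the ceiling, so the naive claim ``all indices are bounded by~$\plf\uu$'' should be justified via $\ii_\pp \le \ii_\pp + \ell - \pp \le \plf\ww$ rather than asserted directly. With that remark the search space is genuinely finite and the decision procedure terminates.
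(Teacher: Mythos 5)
Your argument is correct and is essentially identical to the paper's: both use homogeneity to fix the length, Lemma~\ref{L:Ceiling} to bound each generator index by the ceiling (hence confining the $\eqp$-class to a finite set of size at most $\plf\uu^{\lgg\uu}$), and then saturate under the relations to enumerate the class and test membership. Your explicit remark that the index bound comes from $\ii_\pp \le \ii_\pp + \ell - \pp \le \plf\ww$ is a correct justification of the step the paper leaves implicit.
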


\begin{proof}
For every word~$\ww$ in~$\AHs$, the $\eqp$-class of~$\ww$ is finite: indeed, $\ww' \eqp \ww$ implies both $\lgg{\ww'} = \lgg\ww$ and $\plf{\ww'} = \plf\ww$, and the number of words~$\ww'$ satisfying these conditions is bounded above by~$\plf\ww^{\lgg\ww}$. Therefore, starting from two words~$\ww, \ww'$, one can decide whether $\ww' \eqp \ww$ holds by saturating~$\{\ww\}$ with respect to the relations of~$\PH$, eventually obtaining in finitely many steps an exhaustive enumeration of the $\eqp$-class of~$\ww$. Then one compares~$\ww'$ with the elements of the list so constructed.
\end{proof}

Another property that directly follows from the presentation is the fact that the monoid~$\FFp$ is a quotient of~$\HHp$:

\begin{prop}\label{P:Proj}
The map~$\pi: \gh\ii \mapsto \gf\ii$ induces a surjective homomorphism from the monoid~$\HHp$ onto the Thompson monoid~$\FFp$.
\end{prop}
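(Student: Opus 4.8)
The plan is to rely on the universal property of monoid presentations. Since $\HHp$ is defined by~$\PH$, any assignment of the generators~$\gh\ii$ to elements of a monoid~$\MM$ extends to a unique monoid homomorphism $\HHp \to \MM$ if, and only if, for every defining relation $\ww = \ww'$ of~$\PH$, the two images agree in~$\MM$. Taking $\MM = \FFp$ and the assignment $\gh\ii \mapsto \gf\ii$, I would thus reduce the statement to checking that the $\pi$-image of each relation of~$\PH$ holds in~$\FFp$, the homomorphism being then automatically well defined on the whole of~$\HHp$.

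The length~$2$ relations are immediate: a relation $\gh\jj\gh\ii = \gh\ii\gh{\jj + 1}$ with $\jj \ge \ii + 2$ maps to $\gf\jj\gf\ii = \gf\ii\gf{\jj + 1}$, and since $\jj \ge \ii + 2$ entails $\jj \ge \ii + 1$, this is literally a defining relation of~$\PF$, hence valid in~$\FFp$. The only point requiring an actual computation is the length~$3$ relation $\gh{\ii + 1}\gh\ii\gh{\ii + 1} = \gh\ii\gh{\ii + 1}\gh{\ii + 3}$, whose image is $\gf{\ii + 1}\gf\ii\gf{\ii + 1} = \gf\ii\gf{\ii + 1}\gf{\ii + 3}$. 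Starting from the left-hand side, I would apply the relation of~$\PF$ to the prefix $\gf{\ii + 1}\gf\ii$ (legitimate since $\ii + 1 \ge \ii + 1$), obtaining $\gf\ii\gf{\ii + 2}\gf{\ii + 1}$, and then apply it to the factor $\gf{\ii + 2}\gf{\ii + 1}$ (legitimate since $\ii + 2 \ge (\ii + 1) + 1$), obtaining $\gf\ii\gf{\ii + 1}\gf{\ii + 3}$, which is exactly the right-hand side. So the image relation holds in~$\FFp$.

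Having verified both families, $\pi$ is a well-defined homomorphism, and surjectivity is immediate: every generator~$\gf\ii$ of~$\FFp$ equals~$\pi(\gh\ii)$, and the~$\gf\ii$ generate~$\FFp$. I expect no genuine obstacle here; the whole content lies in the two-step rewriting verifying the image of the length~$3$ relation. It is worth stressing that the shift by one in the index~$\ii + 3$ of~\eqref{E:PresH} is precisely what makes this work: under~$\pi$ the braid-type relation of~$\PH$ collapses to a consequence of the Thompson relations of~$\PF$, whereas the genuine braid relation $\gf{\ii + 1}\gf\ii\gf{\ii + 1} = \gf\ii\gf{\ii + 1}\gf\ii$ would fail in~$\FFp$.
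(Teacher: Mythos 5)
Your proposal is correct and follows essentially the same route as the paper: both reduce the claim to checking the image of each defining relation of~$\PH$ in~$\FFp$, dispose of the length~$2$ relations as literal relations of~$\PF$, and verify the length~$3$ relation via the identical two-step computation $\gf{\ii+1}\gf\ii\gf{\ii+1} = \gf\ii\gf{\ii+2}\gf{\ii+1} = \gf\ii\gf{\ii+1}\gf{\ii+3}$, with surjectivity immediate from the generators lying in the image.
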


\begin{proof}
Let $\pi^*$ be the extension of~$\pi$ into a homomorphism from the free monoid~$\AH^*$ to the monoid~$\FFp$. We claim that $\ww \eqp \ww'$ implies $\pi^*(\ww) = \pi^*(\ww')$. It is enough to check this when $\ww {=} \ww'$ is a relation of~$\PH$. The case of length~$2$ relations is trivial, as the latter are relations of~$\PF$. For length~$3$ relations, we find in~$\FFp$
$$\pi^*(\gh{\ii + 1} \gh\ii \gh{\ii + 1}) = \gf{\ii + 1} \gf\ii \gf{\ii + 1} = \gf\ii \gf{\ii + 2} \gf{\ii + 1} = \gf\ii \gf{\ii + 1} \gf{\ii + 3} = \pi^*(\gh\ii \gh{\ii + 1} \gh{\ii + 3}).$$
So $\pi^*$ induces a homomorphism from~$\HHp$ to~$\FFp$. The latter is surjective since each generator~$\gf\ii$ lies in the image.
\end{proof}

The projection~$\pi$ from~$\HHp$ to~$\FFp$ provided by Proposition~\ref{P:Proj} is not injective: $\gh2\gh1$ and~$\gh1\gh3$ are distinct in~$\HHp$ since no relation of~$\PH$ applies to the corresponding words, but they both project to~$\gf2\gf1$ in~$\FFp$.

\subsection{A normal form on~$\HHp$}\label{SS:NFH}

Like in the case of~$\FFp$, our first method for investigating the monoid~$\HHp$ is to construct a normal form using a rewrite system.

\begin{lemm}\label{L:ConvH}
Let~$\EH$ be the rewrite system on~$\AHs$ defined by the rules
\begin{gather}
\label{E:RSH1}
\gh\ii \gh{\jj + 1} \to \gh{\jj} \gh\ii \text{\quad for $\ii \ge 1$ and $\jj \ge \ii + 2$},\\
\label{E:RSH2}
\gh\ii \gh{\ii + 1}\gh{\ii + 3} \to \gh{\ii + 1}\gh\ii \gh{\ii + 1}\text{\quad for $\ii \ge 1$}.
\end{gather}
Then $\EH$ is convergent.
\end{lemm}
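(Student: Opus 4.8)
The plan is to mimic the proof of Lemma~\ref{L:ConvF}: show that $\EH$ is noetherian and locally confluent, then invoke Newman's lemma~\cite{New}. For termination, the weight $\rho(\ww) = \sum$ of the indices used for~$\FFp$ no longer works, since the rule~\eqref{E:RSH2} replaces $\gh\ii \gh{\ii+1} \gh{\ii+3}$ (index sum $3\ii+4$) by $\gh{\ii+1} \gh\ii \gh{\ii+1}$ (index sum $3\ii+3$), which does decrease, while rule~\eqref{E:RSH1} decreases the sum by~$1$ as before; so in fact the same weight~$\rho$ still strictly decreases under both rules, and $\EH$ is noetherian for exactly the reason given in Lemma~\ref{L:ConvF}. (One should double-check that $\gh{\jj} \gh\ii$ from~\eqref{E:RSH1} with $\jj \ge \ii+2$ indeed lowers the sum from $\ii + (\jj+1)$ to $\jj + \ii$, which it does.)

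The substance is local confluence. First I would record that disjoint overlaps are trivially confluent, so only genuinely overlapping critical pairs matter. Since the left-hand sides of the rules have lengths~$2$ and~$3$, the overlaps come in several shapes: two occurrences of the length-$2$ rule~\eqref{E:RSH1} sharing one letter (the analogue of the critical diagram~\eqref{E:LocConfF} for~$\FFp$, on a word $\gh\ii \gh{\jj+1} \gh{\kk+2}$ with $\jj \ge \ii+2$, $\kk \ge \jj+2$); a length-$2$ rule overlapping a length-$3$ rule in one or two letters; and two length-$3$ rules overlapping in one or two letters. For each shape I would enumerate the finitely many index-configurations allowed by the side conditions $\jj \ge \ii+2$ in~\eqref{E:RSH1} and the rigid spacing $\gh\ii\gh{\ii+1}\gh{\ii+3}$ in~\eqref{E:RSH2}, reduce both branches to their common $\EH$-normal form, and display the resolving diagram. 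The key algebraic input at each step is the word problem being tractable (Proposition~\ref{P:WordPbH}) together with the fact that $\eqp$-classes are finite (from Lemma~\ref{L:Ceiling}), so each critical pair can be resolved by explicit rewriting rather than by an abstract argument.

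The main obstacle will be the overlaps involving the length-$3$ braid-type rule~\eqref{E:RSH2}, because the rigid pattern $\gh\ii\gh{\ii+1}\gh{\ii+3}$ has a ``gap'' (indices $\ii, \ii+1$, then $\ii+3$) into which the commutation rule~\eqref{E:RSH1} can insert itself in awkward ways, and because two braid relations can overlap in a two-letter block $\gh{\ii+1}\gh\ii$ or in a shared end letter. I expect the genuinely delicate critical pair to be the self-overlap of~\eqref{E:RSH2} with itself; resolving it will require pushing several length-$2$ reductions through, and confirming convergence may need a short chain of rewrites on both sides rather than a single step, exactly the sort of ``delicate inductive argument'' the introduction warns about. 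In practice I would tabulate every overlap type, verify confluence case by case (skipping the routine diagrams, as is standard), and flag the braid-braid overlap as the one requiring care. Having checked noetherianity and local confluence, $\EH$ is convergent by Newman's diamond lemma~\cite{New}.
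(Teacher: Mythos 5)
Your overall strategy is the paper's: establish noetherianity and local confluence, then invoke Newman's diamond lemma. For noetherianity you and the paper part ways, and your route is valid (indeed slightly more self-contained): the index-sum weight $\rho$ strictly decreases under both rules, by~$1$ for~\eqref{E:RSH1} and by~$2$ for~\eqref{E:RSH2}, since $\gh{\ii+1}\gh\ii\gh{\ii+1}$ has index sum $3\ii+2$, not $3\ii+3$ as you wrote (a harmless slip). The paper instead projects $\EH$-reduction sequences to $\EF$-reduction sequences via~$\pi$ and inherits noetherianity from Lemma~\ref{L:ConvF}; both arguments work.

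The gap is in local confluence, which is the substance of the lemma and which you only outline. First, your enumeration of overlap shapes is partly wrong: the left-hand side $\gh\ii\gh{\ii+1}\gh{\ii+3}$ of~\eqref{E:RSH2} admits no two-letter self-overlap, because $\gh{\ii+1}\gh{\ii+3}$ is never of the form $\gh{\ii'}\gh{\ii'+1}$; likewise neither two-letter factor of $\gh\ii\gh{\ii+1}\gh{\ii+3}$ is an instance of the left-hand side of~\eqref{E:RSH1}, whose indices differ by at least~$3$. A correct enumeration yields exactly four critical-pair families: the \eqref{E:LocConfF}-type overlap of two commutation rules, the one-letter self-overlap of~\eqref{E:RSH2} on $\gh\ii\gh{\ii+1}\gh{\ii+3}\gh{\ii+4}\gh{\ii+6}$, and the two mixed overlaps $\gh\ii\gh{\jj+1}\gh{\jj+2}\gh{\jj+4}$ (with $\jj\ge\ii+2$) and $\gh\ii\gh{\ii+1}\gh{\ii+3}\gh{\jj+1}$ (with $\jj\ge\ii+5$); each must be resolved by exhibiting an explicit common $\TOH$-reduct, which in the paper requires chains of up to five rewriting steps. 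Second, the tools you propose as the ``key algebraic input''---decidability of the word problem (Proposition~\ref{P:WordPbH}) and finiteness of $\eqp$-classes---cannot do this job: the two branches of a critical pair are automatically $\eqp$-equivalent because the rules are instances of the defining relations, and what local confluence requires is a common reduct for the \emph{oriented} relation~$\toH$, which cannot be deduced from $\eqp$-equivalence before convergence is established. So the case analysis must actually be carried out; until it is, the argument is a plan rather than a proof.
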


\begin{proof}
As in the case of~$\EF$, we show that $\EH$ is noetherian and locally confluent, and appeal to Newman's diamond lemma. Let $\pi$ denote the homomorphism from~$\AHs$ on~$\AFs$ that maps~$\gh\ii$ to~$\gf\ii$ for every~$\ii$. Then, for every~$\ww$ in~$\AHs$ and every integer~$\mm$,
\begin{equation}\label{E:RSProj}
\ww \toH^\mm \ww' \quad\text{implies}\quad \pi(\ww) \toF^\pp \pi(\ww') \text{\quad for some~$\pp$ satisfying $\mm \le \pp \le 2\mm$}.
\end{equation}
Indeed, up to applying~$\pi$, \eqref{E:RSH1} is a rule of~$\EF$, whereas, for~\eqref{E:RSH2}, we find
$$\pi(\gh\ii \gh{\ii + 1}\gh{\ii + 3}) = \gf\ii \gf{\ii + 1}\gf{\ii + 3} \toF \gf\ii \gf{\ii +2}\gf{\ii + 1} \toF \gf{\ii +1} \gf\ii \gf{\ii + 1} = \pi(\gh{\ii + 1} \gh\ii \gh{\ii + 1}).$$ 
Then an infinite nontrivial sequence of $\EH$-reductions would project to an infinite nontrivial sequence of $\EF$-reductions, so the noetherianity of~$\EF$ implies that of~$\EH$.

We now check local confluence. As in Lemma~\ref{L:ConvF}, it is sufficient to consider the critical cases where two rules overlap. As there are two types of rules, four patterns are possible. Twice using~\eqref{E:RSH1} has already been seen (up to a change of letters) in~\eqref{E:LocConfF}. The remaining three cases then correspond to the confluence diagrams
\begin{equation}\label{E:LocConfH1}
\VR(9,7)\begin{picture}(110,0)(0,6)
\put(0,6){$\gh\ii\gh{\ii+1}\gh{\ii+3}\gh{\ii+4}\gh{\ii+6}$}
\put(40,12){$\gh{\ii+1}\gh\ii\gh{\ii+1}\gh{\ii+4}\gh{\ii+6}$}
\put(40.3,0){$\gh\ii\gh{\ii+1}\gh{\ii+4}\gh{\ii+3}\gh{\ii+4}$}
\put(80,6){$\gh{\ii+2}\gh{\ii+1}\gh{\ii+2}\gh\ii\gh{\ii+1}$\ ,}
\put(30,3){\rotatebox{-30}{\hbox{$\toH$}}}
\put(30,8.5){\rotatebox{30}{\hbox{$\toH$}}}
\put(70,1){\rotatebox{30}{\hbox{$\toH^5$}}}
\put(70,10.5){\rotatebox{-30}{\hbox{$\toH^5$}}}
\end{picture}
\end{equation}
\begin{equation}\label{E:LocConfH2}
\VR(9,7)\begin{picture}(80,0)(0,6)
\put(-3,6){$\gh\ii\gh{\jj + 1} \gh{\jj+2}\gh{\jj+4}$}
\put(32,12){$\gh{\jj}\gh\ii \gh{\jj+2}\gh{\jj+4}$}
\put(30,0){$\gh\ii \gh{\jj+2} \gh{\jj + 1} \gh{\jj+2}$}
\put(63,6){$\gh{\jj + 1} \gh{\jj }\gh{\jj + 1} \gh\ii$}
\put(22,3){\rotatebox{-30}{\hbox{$\toH$}}}
\put(22,8.5){\rotatebox{30}{\hbox{$\toH$}}}
\put(55,1){\rotatebox{30}{\hbox{$\toH^3$}}}
\put(55,10.5){\rotatebox{-30}{\hbox{$\toH^3$}}}
\end{picture}
\text{\quad with $\jj\ge\ii+2$,}
\end{equation}
\begin{equation}\label{E:LocConfH3}
\VR(9,7)\begin{picture}(85,0)(-2,6)
\put(-3,6){$\gh\ii\gh{\ii+1}\gh{\ii+3}\gh{\jj + 1}$}
\put(31,12){$\gh{\ii+1}\gh\ii \gh{\ii+1}\gh{\jj + 1}$}
\put(33,0){$\gh\ii\gh{\ii+1}\gh{\jj}\gh{\ii+3}$}
\put(63,6){$\gh{\jj-2}\gh{\ii+1}\gh\ii \gh{\ii+1}$}
\put(22,3){\rotatebox{-30}{\hbox{$\toH$}}}
\put(22,8.5){\rotatebox{30}{\hbox{$\toH$}}}
\put(55,1){\rotatebox{30}{\hbox{$\toH^3$}}}
\put(55,10.5){\rotatebox{-30}{\hbox{$\toH^3$}}}
\end{picture}
\text{\quad with $\jj\ge\ii+5$,}
\end{equation}
which complete the verification.
\end{proof}

We deduce:

\begin{prop}\label{P:RedH}
$\EH$-reduced words provide a unique normal form for the elements of the monoid~$\HHp$.
\end{prop}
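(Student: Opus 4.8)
The plan is to deduce the statement from the convergence of~$\EH$ (Lemma~\ref{L:ConvH}) in exactly the same way that Proposition~\ref{P:RedF} follows from Lemma~\ref{L:ConvF}. The one thing I would check first is that the one-step rewrite relation~$\toH$ is an orientation of the defining relations of~$\PH$. Indeed, rule~\eqref{E:RSH1} is the length~$2$ relation $\gh\jj \gh\ii = \gh\ii \gh{\jj + 1}$ (for $\jj \ge \ii + 2$) read as $\gh\ii \gh{\jj + 1} \to \gh\jj \gh\ii$, while rule~\eqref{E:RSH2} is the length~$3$ relation $\gh{\ii + 1} \gh\ii \gh{\ii + 1} = \gh\ii \gh{\ii + 1} \gh{\ii + 3}$ read as $\gh\ii \gh{\ii + 1} \gh{\ii + 3} \to \gh{\ii + 1} \gh\ii \gh{\ii + 1}$. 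Consequently the reflexive--symmetric--transitive closure of~$\toH$ coincides with the congruence~$\eqp$, so that two words represent the same element of~$\HHp$ if, and only if, they are joined by a finite zigzag of~$\toH$-steps.

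Next I would invoke the convergence established in Lemma~\ref{L:ConvH}: every word~$\ww$ of~$\AHs$ admits a unique $\EH$-reduced descendant, which I would denote by~$\redH\ww$ and characterize by $\ww \TOH \redH\ww$. By the Church--Rosser property, which is the standard consequence of convergence already used for~$\EF$, two words~$\ww, \ww'$ satisfy $\ww \eqp \ww'$ if, and only if, $\redH\ww = \redH{\ww'}$. Hence the map sending the class~$\cl\ww$ to~$\redH\ww$ is well defined and injective on~$\HHp$, and its image is precisely the set of $\EH$-reduced words. This exhibits $\EH$-reduced words as a system of unique normal forms for~$\HHp$, as required.

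There is no genuine obstacle here: all the analytic content, namely noetherianity and local confluence, has already been absorbed into Lemma~\ref{L:ConvH} via Newman's lemma, and the only point specific to the present statement is the elementary matching between the orientation of the two rules and the two families of relations of~$\PH$ recorded in the first paragraph. The present proposition is thus a formal corollary of the convergence of~$\EH$, entirely parallel to the passage from Lemma~\ref{L:ConvF} to Proposition~\ref{P:RedF}.
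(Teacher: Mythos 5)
Your argument is correct and is exactly the paper's route: the paper states this proposition with no proof beyond ``We deduce'' from Lemma~\ref{L:ConvH}, relying on the same observation that the rules of~$\EH$ are orientations of the relations of~$\PH$ so that convergence yields a unique $\EH$-reduced representative in each $\eqp$-class, just as in the passage from Lemma~\ref{L:ConvF} to Proposition~\ref{P:RedF}. Nothing to add.
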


For~$\aa$ in~$\HHp$, we shall denote by~$\NF(\aa)$ the unique $\EH$-reduced word that represents~$\aa$. For~$\ww$ in~$\AHs$, we denote by~$\redH\ww$ the unique $\EH$-reduced word to which $\ww$ is $\EH$-reducible. As in Section~\ref{SS:NFF}, we note that a $\AH$-word is $\EH$-reduced if, and only if, it contains no factor in a list of obstructions, here
\begin{equation}\label{E:Obst}
\LI:= \{\gh\ii \gh\jj \mid \jj \ge \ii + 3\} \cup \{\gh\ii \gh{\ii + 1} \gh{\ii + 3} \mid \ii \ge 1\}.
\end{equation}
This implies that, for every~$\nn$, the family of all $\EF$-reduced words lying in~$\{\gh1 \wdots \gh\nn\}^*$ is a regular language. The above characterization of $\EH$-reduced words implies the following useful properties:

\begin{coro}\label{C:RedH}

\ITEM1 Every factor of an $\EH$-reduced word is $\EH$-reduced.

\ITEM2 A word~$\ww$ is $\EH$-reduced if, and only if, all length~$3$ factors of~$\ww$ are.

\ITEM3 If $\uu\vv$ and $\vv\ww$ are $\EH$-reduced, then $\uu\vv\ww$ is $\EH$-reduced, except for:

\HS5- $\lgg\vv=0$, $\uu=\uu'\gh\ii$, and $\ww=\gh\jj\ww'$ with $\jj\ge\ii+3$,

\HS5- $\lgg\vv=0$, $\uu=\uu'\gh\ii$, and $\ww=\gh{\ii+1}\gh{\ii+3}\ww'$,

\HS5- $\lgg\vv=0$, $\uu=\uu'\gh\ii\gh{\ii+1}$, and $\ww=\gh{\ii+3}\ww'$,

\HS5- $\lgg\vv=1$, $\uu=\uu'\gh{\ii}$, $\vv=\gh{\ii+1}$, and $\ww=\gh{\ii+3}\ww'$.
\end{coro}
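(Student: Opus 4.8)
The plan is to reduce everything to the local characterization recorded just after Proposition~\ref{P:RedH}, namely that a word of~$\AHs$ is $\EH$-reduced if, and only if, it contains no factor lying in the obstruction set~$\LI$ of~\eqref{E:Obst}, every member of which has length~$2$ or~$3$. With this in hand, \ITEM1 is immediate: a factor of a factor of~$\ww$ is again a factor of~$\ww$, so if no element of~$\LI$ occurs in~$\ww$, none occurs in any factor of~$\ww$. For~\ITEM2, the forward implication is the special case of~\ITEM1 applied to length~$3$ factors. For the converse I would argue that, since every obstruction has length at most~$3$ and, when $\lgg\ww \ge 3$, every factor of~$\ww$ of length at most~$3$ is contained in a factor of length exactly~$3$, any obstruction occurring in~$\ww$ already occurs in one of its length~$3$ factors; so if all of these are reduced, so is~$\ww$ (words of length at most~$2$ being inspected directly).

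The real content is~\ITEM3, which I would establish by locating every possible obstruction of~$\uu\vv\ww$. Assume $\uu\vv$ and~$\vv\ww$ are $\EH$-reduced but $\uu\vv\ww$ is not, and let~$\zz$ be a factor of~$\uu\vv\ww$ belonging to~$\LI$. Then $\zz$ can lie neither entirely within the prefix~$\uu\vv$ nor entirely within the suffix~$\vv\ww$, both being reduced. Reading $\uu\vv\ww$ as the $\uu$-block, the $\vv$-block and the $\ww$-block in succession, ``not inside~$\uu\vv$'' forces the last letter of~$\zz$ into the $\ww$-block, while ``not inside~$\vv\ww$'' forces its first letter into the $\uu$-block. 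Since $\zz$ is a contiguous factor, it must then contain the whole of~$\vv$ together with at least one letter of~$\uu$ and at least one letter of~$\ww$, so $\lgg\zz \ge \lgg\vv + 2$. As $\lgg\zz \le 3$, this forces $\lgg\vv \le 1$, and this length count is the crux of the proof.

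It then remains to enumerate the straddling obstructions, the part I expect to require the most careful bookkeeping. If $\lgg\vv = 0$, then $\zz$ is a factor of~$\uu\ww$ using at least one letter on each side of the junction: a length~$2$ obstruction $\gh\ii\gh\jj$ with $\jj \ge \ii + 3$ gives $\uu = \uu'\gh\ii$ and $\ww = \gh\jj\ww'$; a length~$3$ obstruction $\gh\ii\gh{\ii+1}\gh{\ii+3}$ splits either as two letters in~$\uu$ and one in~$\ww$, giving $\uu = \uu'\gh\ii\gh{\ii+1}$ and $\ww = \gh{\ii+3}\ww'$, or as one letter in~$\uu$ and two in~$\ww$, giving $\uu = \uu'\gh\ii$ and $\ww = \gh{\ii+1}\gh{\ii+3}\ww'$. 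If $\lgg\vv = 1$, then $\zz$ must contain the unique letter of~$\vv$ together with one letter on each side, so $\lgg\zz = 3$ and necessarily $\zz = \gh\ii\gh{\ii+1}\gh{\ii+3}$ with $\uu = \uu'\gh\ii$, $\vv = \gh{\ii+1}$ and $\ww = \gh{\ii+3}\ww'$; a length~$2$ obstruction is impossible here, as it would either skip the letter of~$\vv$ or fall inside~$\uu\vv$ or~$\vv\ww$. These four configurations are exactly the listed exceptions, and conversely each of them does exhibit the factor $\gh\ii\gh\jj$ or $\gh\ii\gh{\ii+1}\gh{\ii+3}$ inside~$\uu\vv\ww$, so the exceptions are genuine and cannot be omitted. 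The only real obstacle is ensuring that this case split is exhaustive and that no configuration is overlooked, which the uniform bound $\lgg\zz \ge \lgg\vv + 2$ keeps firmly under control.
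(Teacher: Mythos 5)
Your proposal is correct and follows essentially the same route as the paper: reduce everything to the absence of factors from the obstruction set~$\LI$, observe that an obstruction witnessing non-reducedness of~$\uu\vv\ww$ must straddle both junctions (hence $\lgg\vv\le 1$, since obstructions have length at most~$3$), and then enumerate the straddling configurations. Your write-up is in fact somewhat more explicit than the paper's, which leaves the final case enumeration to the reader.
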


\begin{proof}
Points~\ITEM1 and~\ITEM2 directly follow from the characterization of $\EH$-reduced words, and so does the fact that $\uu\vv\ww$ is not $\EH$-reduced if one is in one of the four listed cases. The point is, assuming that $\uu\vv\ww$ is not $\EH$-reduced, to prove that one is necessarily in one of the listed cases. Now the assumption that $\uu\vv\ww$ is not $\EH$-reduced means that at least one rule of~$\EH$ can be applied, and, owing to~\ITEM2, the assumption about~$\uu\vv$ and~$\vv\ww$ requires that $\vv$ has length at most one. Considering the various possibilities yields the four identified cases.
\end{proof}

The next result shows that, if $\ww$ is an $\EH$-reduced word, then the $\EH$-reduced form of~$\ww\gh\ii$ is obtained by pushing~$\gh\ii$ to the left as much as possible:.

\begin{lemm}\label{L:Reduction}
If $\ww$ is $\EH$-reduced, then, for every~$\ii$, we have $\redH{\ww \gh\ii} = \ww_1\gh{\ii -\lgg{\ww_2}} \ww_2$ for some decomposition~$(\ww_1, \ww_2)$ of~$\ww$.
\end{lemm}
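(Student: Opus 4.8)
The plan is to prove this by induction on the length of~$\ww$, peeling off the last letter and carefully tracking how $\gh\ii$ migrates leftward through $\EH$-reductions. Write $\ww = \ww' \gh\jj$ with $\ww'$ an $\EH$-reduced word (by Corollary~\ref{C:RedH}\ITEM1). The base case $\lgg\ww = 0$ is immediate, taking $\ww_1 = \ww_2 = \ew$. For the inductive step, I would examine the two-letter suffix $\gh\jj \gh\ii$ of~$\ww\gh\ii$ and distinguish cases according to whether this factor is already $\EH$-reduced or matches the left side of a rule in~\eqref{E:RSH1} or~\eqref{E:RSH2}.

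The first step is the \emph{local} analysis at the junction. If $\gh\jj\gh\ii$ is $\EH$-reduced---which by~\eqref{E:Obst} means $\ii \ge \jj - 2$, i.e. $\gh\ii$ cannot slide past~$\gh\jj$---then $\ww\gh\ii$ is itself $\EH$-reduced (using Corollary~\ref{C:RedH}\ITEM3 to rule out the four exceptional patterns, which one checks cannot arise here because the relevant three-letter factor ending in~$\gh\jj$ was already reduced in~$\ww$), and we are done with $\ww_1 = \ww$, $\ww_2 = \ew$. Otherwise a rule applies at the junction: either $\gh\jj\gh\ii \to \gh{\ii}\gh{\jj}$ via~\eqref{E:RSH1} when $\jj \ge \ii + 3$ (note the index bookkeeping: after the swap the migrating letter has index $\ii$ and the letter it passed has index raised, matching the shape $\gh{\ii - \lgg{\ww_2}}$ with $\lgg{\ww_2}$ eventually equal to the number of letters crossed), or $\gh\jj\gh\ii$ participates in a length-$3$ rule~\eqref{E:RSH2}, which requires looking one letter further back into~$\ww'$. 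The key bookkeeping observation is that each leftward swap decreases the index of the migrating generator by exactly one per letter crossed, which is precisely why the final index reads~$\gh{\ii - \lgg{\ww_2}}$.

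After performing one reduction at the junction, I would \emph{reassemble} and reapply the induction hypothesis. Concretely, after the swap $\ww'\gh\jj\gh\ii \to \ww'\gh{\ii}\gh{\jj'}$ (with $\jj' = \jj$ or the appropriately shifted value dictated by the rule), the prefix $\ww'\gh\ii$ is shorter, and the migrating letter is now~$\gh\ii$ sitting at the end of an $\EH$-reduced word; applying the induction hypothesis to $\ww'$ with migrating index~$\ii$ yields a decomposition $\ww' = \ww_1\ww_2'$ with $\redH{\ww'\gh\ii} = \ww_1 \gh{\ii - \lgg{\ww_2'}} \ww_2'$, and appending the trailing letter $\gh{\jj'}$ gives the desired $\ww_2 = \ww_2'\gh{\jj'}$ after verifying that no \emph{further} reduction is triggered to the right of the insertion point. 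The length-$3$ case~\eqref{E:RSH2} is the delicate one: it produces a word of the form $\gh{\ii+1}\gh\ii\gh{\ii+1}$ in place of $\gh\ii\gh{\ii+1}\gh{\ii+3}$, and one must check that the leftmost new letter $\gh{\ii+1}$ is the genuine continuation of the leftward migration while the tail $\gh\ii\gh{\ii+1}$ settles into $\ww_2$.

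The main obstacle I anticipate is precisely this length-$3$ rule: unlike the clean transposition of~\eqref{E:RSH1}, it rewrites a block of three letters and changes which generator is ``migrating,'' so the inductive invariant ``a single letter of index $\ii - (\text{letters crossed})$ slides left, leaving the crossed letters unchanged as a suffix'' must be stated and verified with care. I would guard against cascading reductions by invoking Corollary~\ref{C:RedH}\ITEM2 and~\ITEM3 to confine the analysis to length-$3$ windows around each reduction site, ensuring that once the migrating letter lands in its final position the portion of~$\ww$ to its right (namely~$\ww_2$) is left untouched and remains $\EH$-reduced, so that no hidden obstruction from~\eqref{E:Obst} is created. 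Establishing that the crossed letters accumulate verbatim into~$\ww_2$---rather than being themselves modified---is the crux, and it is exactly where the one-index shifts in both rules must be reconciled to yield the uniform formula $\gh{\ii - \lgg{\ww_2}}$.
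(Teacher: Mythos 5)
Your overall strategy is the same as the paper's: induct on $\lgg\ww$, peel off the last letter $\gh\jj$ of~$\ww$, perform one reduction at the junction with the appended~$\gh\ii$, apply the induction hypothesis to the shorter prefix with the new migrating letter, and reassemble using Corollary~\ref{C:RedH} to confirm reducedness. But as written, the case analysis at the junction does not go through.

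First, your opening case is false as stated: $\gh\jj\gh\ii$ being $\EH$-reduced does \emph{not} imply that $\ww\gh\ii$ is $\EH$-reduced. The problematic situation is $\ii=\jj+2$ with $\ww$ ending in $\gh{\ii-3}\gh{\ii-2}$: then $\gh{\ii-2}\gh\ii$ is reduced (it is not in~$\LI$), yet $\ww\gh\ii$ ends with the obstruction $\gh{\ii-3}\gh{\ii-2}\gh\ii$ and rule~\eqref{E:RSH2} fires. Your parenthetical appeal to Corollary~\ref{C:RedH}\ITEM3 does not rescue this: the relevant exceptional pattern there is precisely $\uu=\uu'\gh{a}\gh{a+1}$, $\ww=\gh{a+3}\ww'$, and it is not excluded by the fact that ``the three-letter factor ending in~$\gh\jj$ was already reduced in~$\ww$''---the new obstruction ends in the appended letter, not in~$\gh\jj$. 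The paper avoids this by making $\ii=\jj+2$ a separate case in which one looks one letter further back. Second, your description of the length-$2$ step has both the inequality and the output backwards: rule~\eqref{E:RSH1} applies to $\gh\jj\gh\ii$ when $\ii\ge\jj+3$ (not $\jj\ge\ii+3$) and yields $\gh{\ii-1}\gh\jj$, so the migrating letter's index drops by one while the crossed letter~$\gh\jj$ is unchanged (not ``raised''). Consequently the induction hypothesis must be applied to the shorter prefix with migrating index $\ii-1$ (and $\ii-2$ in the length-$3$ case), not~$\ii$. You do state the correct global invariant---one index lost per letter crossed, which is what makes the final exponent $\ii-\lgg{\ww_2}$ come out right---but the local steps must be corrected along these lines before the induction closes.
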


\begin{proof}
We use induction on the length of~$\ww$. For~$\ww$ empty, the result is obvious. So assume $\lgg\ww\ge 1$. Then we have $\ww = \ww' \gh\kk$ for some~$\kk$. As $\ww'\gh\kk$ is $\EH$-reduced, the possible rewritings of~$\ww'\gh\kk\gh\ii$ necessarily involve the final letter~$\gh\ii$.
	
For $\ii \le \kk+1$, no rule applies to~$\ww'\gh\kk\gh\ii$, so $\ww\gh\ii$ is $\EH$-reduced, and the result is true for $(\ww_1,\ww_2) :=(\ww,\ew)$.
	
For $\ii \ge \kk+3$, we have $\ww'\gh\kk\gh\ii \toH \ww'\gh{\ii-1}\gh\kk$. Now $\ww'$ is $\EH$-reduced and shorter than~$\ww$. Hence, by induction hypothesis, there exists a decomposition $\ww'=\ww_1'\ww_2'$ satisfying $\redH{\ww'\gh{\ii-1}} = \ww_1'\gh\jj \ww_2'$ with~$\jj = \ii - 1 - \lgg{\ww_2'}$. Now $\ww_1'\gh\jj\ww_2'$ is $\EH$-reduced, and so is $\ww_2'\gh\kk$ as a factor of the $\EH$-reduced word~$\ww_1'\ww_2'\gh\kk$. By Corollary~\ref{C:RedH}, $\ww_1'\gh\jj\ww_2'\gh\kk$ is $\EH$-reduced, as we have $\jj \ge \kk + 2 -\lgg{\ww_2'}$. Hence, $\redH{\ww\gh\ii}$ is $\ww_1'\gh\jj\ww_2'\gh\kk$, and the result is true with $(\ww_1, \ww_2) := (\ww_1',\ww_2'\gh\kk)$.
	
There remains the case $\ii = \kk +2$. For $\ww'=\ew$, we have $\ww\gh\ii=\gh{\ii-2}\gh\ii$, which is $\EH$-reduced, and the result is true for $(\ww_1,\ww_2) :=(\ww,\ew)$. Otherwise, we write $\ww' = \ww''\gh\ell$. For $\ell\ne\ii-3$, we find $\ww\gh\ii = \ww''\gh\ell\gh{\ii-2}\gh\ii$, which is $\EH$-reduced as, by assumption, $\ww''\gh\ell\gh{\ii-2}$ is $\EH$-reduced. So the result is true for $(\ww_1,\ww_2) :=(\ww,\ew)$.
	
Finally, for $\ell= \ii-3$, we have $\ww'' \gh{\ii-3} \gh{\ii-2} \gh\ii \eqp \ww'' \gh{\ii-2} \gh{\ii-3} \gh{\ii-2}$. As $\ww''$ is $\EH$-reduced and shorter than~$\ww$, the induction hypothesis gives a decomposition $\ww''=\ww_1''\ww_2''$ satisfying $\redH{\ww''\gh{\ii-2}} = \ww_1''\gh\jj\ww_2''$ with $\jj = \ii - 2 - \lgg{\ww_2''}$. It remains to show that $\ww_1''\gh\jj\ww_2''\gh{\ii-3}\gh{\ii-2}$ is $\EH$-reduced. Now $\ww_1''\gh\jj\ww_2''$ is $\EH$-reduced, and so is $\ww_2''\gh{\ii-3}\gh{\ii-2}$, as a factor of the $\EH$-reduced word $\ww_1'' \ww_2'' \gh{\ii-3} \gh{\ii-2}$. By Corollary~\ref{C:RedH}, $\ww_1''\gh\jj\ww_2''\gh{\ii-3}\gh{\ii-2}$ is $\EH$-reduced, and $\jj = \ii - 2 -\lgg{\ww_2''}$ holds. Therefore, $\redH{\ww\gh\ii}$ is $\ww_1'' \gh\jj \ww_2'' \gh{\ii-3} \gh{\ii-2}$, and the result is true for $(\ww_1,\ww_2) :=(\ww_1'',\ww_2''\gh{\ii-3}\gh{\ii-2})$. 
\end{proof}

We shall now apply the normal form provided by~$\EH$ to studying right cancellativity in~$\HHp$. At this point, we shall not obtain a complete answer, but only a (surprising) connection between left and right cancellativity.

\begin{prop}\label{P:RCanc}
If $\HHp$ is left cancellative, then it is right cancellative as well.
\end{prop}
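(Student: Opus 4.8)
The plan is to exploit the shift endomorphism and the normal form to reduce right cancellativity to left cancellativity. I would first set up the natural candidate for a connection: the presentation $\PH$ is \emph{symmetric under reversal} in a suitable sense. Concretely, write $\ww^{\mathrm{rev}}$ for the word-reversal (reading letters right to left) composed with the index shift that turns the length~$3$ relation $\gh{\ii+1}\gh\ii\gh{\ii+1} = \gh\ii\gh{\ii+1}\gh{\ii+3}$ into a relation of the same shape when read backwards. The first step is therefore to pin down an anti-automorphism or a length-preserving symmetry of $\AHs$ that sends relations of $\PH$ to relations of $\PH$, thereby inducing an isomorphism between $\HHp$ and its opposite monoid $\HHp^{\mathrm{op}}$ (or, failing a clean isomorphism, a map that transports left-divisibility facts to right-divisibility facts). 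If such a symmetry exists, then right cancellativity in $\HHp$ is literally left cancellativity in $\HHp^{\mathrm{op}} \cong \HHp$, and the proposition is immediate.

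If no exact reversal symmetry is available (the $+3$ shift in the length~$3$ relation makes this delicate, since reversing $\gh\ii\gh{\ii+1}\gh{\ii+3}$ does not obviously land on a left side), I would instead argue directly through the normal form. Suppose $\aa\cc = \bb\cc$ in $\HHp$; I want $\aa = \bb$. The idea is to use Lemma~\ref{L:Reduction}, which describes $\redH{\ww\gh\ii}$ as the result of pushing the new generator as far left as possible, to understand how right-multiplication by a single atom $\gh\ii$ acts on $\EH$-normal forms. Iterating, right-multiplication by any word $\cc$ induces an explicit, \emph{reversible} transformation on normal forms: the key claim to establish is that from $\NF(\aa\gh\ii)$ together with the value~$\ii$ one can recover $\NF(\aa)$ uniquely. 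Granting right cancellativity for a single atom in this way, a straightforward induction on $\lgg\cc$ (writing $\cc = \gh\ii\cc'$ and peeling off the last atom) upgrades it to cancellation by arbitrary $\cc$.

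The crux is thus the single-atom case, and here is exactly where the hypothesis that $\HHp$ is \emph{left} cancellative would enter. The mechanism of Lemma~\ref{L:Reduction} is that appending $\gh\ii$ on the right either leaves the word $\EH$-reduced or triggers a cascade governed by the four non-confluence patterns of Corollary~\ref{C:RedH}\ITEM3; the inverse operation — deleting a rightmost atom from a normal form — is in general multivalued, and it is precisely left cancellativity that forces it to be single-valued. So the plan is: assume $\aa\gh\ii = \bb\gh\ii$, apply the projection $\pi$ of Proposition~\ref{P:Proj} to pass to $\FFp$ (where cancellativity is already known by Proposition~\ref{P:LcmF}) to constrain the shapes of $\NF(\aa)$ and $\NF(\bb)$, and then use left cancellativity of $\HHp$ to collapse the remaining ambiguity.

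The main obstacle I anticipate is the asymmetry introduced by the index shift $\ii \mapsto \ii+3$ in the length~$3$ relation: it breaks the naive left/right symmetry, so one cannot simply invoke "pass to the opposite monoid." Consequently the real work is the combinatorial analysis of how the reduction cascade in Lemma~\ref{L:Reduction} can be \emph{run backwards}, and showing that left cancellativity is exactly the hypothesis needed to make that backward run deterministic. I expect that one sets up a well-chosen induction (on $\lgg\aa$, or on the position to which the appended atom migrates) and, in the branching cases of Corollary~\ref{C:RedH}\ITEM3, produces from a hypothetical failure of right cancellation two distinct elements $\aa \ne \bb$ with $\aa\gh\ii = \bb\gh\ii$, then exhibits an explicit $\xx$ with $\xx\aa = \xx\bb$ but $\aa \ne \bb$, contradicting left cancellativity. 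Making this extraction fully rigorous across all four obstruction patterns is the delicate part.
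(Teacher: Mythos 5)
Your skeleton matches the paper's: reduce to cancelling a single atom (induction on $\lgg\cc$, peeling the \emph{last} letter, so $\cc = \cc'\gh\ii$ rather than $\gh\ii\cc'$ as you wrote), describe $\redH{\NF(\aa)\gh\ii}$ via Lemma~\ref{L:Reduction}, and bring in left cancellativity at the crux. But the crux itself is exactly what you leave open, and the two mechanisms you propose for closing it are not the ones that work. Passing through $\pi$ to $\FFp$ gives essentially nothing, since $\pi$ is far from injective ($\gh2\gh1$ and $\gh1\gh3$ already collapse), so cancelling in $\FFp$ does not constrain $\NF(\aa)$ versus $\NF(\bb)$ usefully. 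And your plan to \emph{refute} left cancellativity by manufacturing some $\xx$ with $\xx\aa = \xx\bb$, $\aa \ne \bb$, has the logic backwards: left cancellativity is used \emph{positively}, to cancel a genuine common left factor from a genuine equality, not as the target of a contradiction.

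Concretely, the missing argument is this. Write $\uu = \NF(\aa)$, $\vv = \NF(\bb)$; Lemma~\ref{L:Reduction} gives $\redH{\uu\gh\ii} = \uu_1\gh\jj\uu_2$ with $\jj = \ii - \lgg{\uu_2}$ and $\redH{\vv\gh\ii} = \vv_1\gh\kk\vv_2$ with $\kk = \ii - \lgg{\vv_2}$, and the hypothesis $\aa\gh\ii = \bb\gh\ii$ forces the literal word equality $\uu_1\gh\jj\uu_2 = \vv_1\gh\kk\vv_2$. The whole point is to rule out $\jj \ne \kk$. Assuming $\jj < \kk$, one gets $\vv_1 = \uu_1\gh\jj\ww$ and $\uu_2 = \ww\gh\kk\vv_2$ for some $\ww$; left-cancelling the common prefix $\uu_1\gh\jj\ww$ from $\vv\gh\ii \eqp \uu_1\gh\jj\ww\gh\kk\vv_2$ yields $\vv_2\gh\ii \eqp \gh\kk\vv_2$, hence $\uu\gh\ii \TOH \uu_1\ww\gh\kk\gh\kk\vv_2$. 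A Corollary~\ref{C:RedH} check shows this word is $\EH$-reduced, so by uniqueness of reduced forms it must \emph{coincide letter for letter} with $\uu_1\gh\jj\ww\gh\kk\vv_2$, giving the word equation $\ww\gh\kk = \gh\jj\ww$, which an easy induction on $\lgg\ww$ shows is possible only for $\jj = \kk$ --- contradiction. Once $\jj = \kk$ is forced, $\lgg{\uu_2} = \lgg{\vv_2}$ and the decompositions match, so $\uu = \vv$. Note that the final contradiction comes from confluence of $\EH$ (two distinct reduced words cannot be equivalent), not from a second appeal to cancellativity; without this word-level analysis your ``backward run is deterministic'' claim remains an assertion, not a proof.
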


\begin{proof}
We assume that $\HHp$ is left cancellative, and aim at proving that any equality $\aa\gh\ii = \bb \gh\ii$ implies $\aa = \bb$. So assume $\aa\gh\ii = \bb\gh\ii$. Let $\uu := \NF(\aa)$ and $\vv:= \NF(\bb)$. By Lemma~\ref{L:Reduction}, there exist $\uu_1, \uu_2, \vv_1, \vv_2$ satisfying
\begin{gather}\label{E:NFSimplD1}
\uu = \uu_1\uu_2 \quad\text{and}\quad\redH{\uu\gh\ii} = \uu_1\gh\jj\uu_2 \quad \text{with $\jj=\ii-\lgg{\uu_2}$},\\
\label{E:NFSimplD2}
\vv = \vv_1\vv_2 \quad\text{and}\quad\redH{\vv\gh\ii} = \vv_1\gh\kk\vv_2 \quad \text{with $\kk=\ii-\lgg{\vv_2}$}.
\end{gather}
By assumption, we have $\aa\gh\ii = \bb\gh\ii$, hence $\redH{\uu\gh\ii} = \redH{\vv\gh\ii}$, and, from there, $\uu_1\gh\jj\uu_2 =\nobreak \vv_1\gh\kk\vv_2$. We consider the various possible cases.

Assume first $\jj\ne\kk$, say $\jj<\kk$. By~\eqref{E:NFSimplD1} and~\eqref{E:NFSimplD2}, we obtain
$$\lgg{\uu_2} =\nobreak \ii - \jj > \ii - \kk = \lgg{\vv_2}, \quad\text{whence}\quad\lgg{\uu_1} < \lgg{\vv_1}.$$
So $\uu_1$ is a proper prefix of~$\vv_1$, and $\vv_2$ is a proper suffix of~$\uu_2$. As $\uu_1$ is a proper prefix of~$\vv_1$, the word~$\uu_1\gh\jj$ is a prefix of~$\vv_1$, and there exists~$\ww$ satisfying $\vv_1=\uu_1\gh\jj \ww$. We find $\uu_1\gh\jj\uu_2 = \uu_1 \gh \jj \ww \gh\kk\vv_2$, hence $\uu_2 = \ww \gh\kk\vv_2$. Therefore, we have
$$\uu = \uu_1 \ww \gh\kk\vv_2, \quad \vv=\uu_1\gh\jj \ww \vv_2, \quad \text{and} \quad \redH{\uu\gh\ii} = \redH{\vv\gh\ii} = \uu_1\gh\jj \ww \gh\kk\vv_2.$$
The equality $\redH{\vv\gh\ii} = \uu_1\gh\jj \ww \gh\kk\vv_2$ implies $\vv\gh\ii \eqp\uu_1\gh\jj \ww \gh\kk\vv_2$, that is, $\uu_1\gh\jj \ww \vv_2\gh\ii \eqp\uu_1\gh\jj \ww \gh\kk\vv_2$. As~$\HHp$ is left cancellative, left cancelling~$\uu_1\gh\jj \ww $ yields $\vv_2\gh\ii\eqp\gh\kk\vv_2$. By assumption, $\uu_1$ is $\EH$-reduced, hence, by Corollary~\ref{C:RedH}\ITEM1, so is its suffix~$\gh\kk\vv_2$. The equivalence $\vv_2\gh\ii\eqp\gh\kk\vv_2$ implies $\redH{\vv_2\gh\ii} = \gh\kk\vv_2$, hence $\vv_2\gh\ii \TOH \gh\kk\vv_2$, whence $\uu\gh\ii = \uu_1 \ww \gh\kk\vv_2\gh\ii \TOH \uu_1 \ww \gh\kk\gh\kk\vv_2$. Now, as a prefix of~$\uu$, the word $\uu_1 \ww \gh\kk$ is $\EH$-reduced, whereas $\gh\kk\gh\kk$ is $\EH$-reduced by definition. By Corollary~\ref{C:RedH}\ITEM3, $\uu_1 \ww \gh\kk\gh\kk$ is reduced. On the other hand, as a suffix of~$\uu_1$, the word $\gh\kk\vv_2$ is $\EH$-reduced, so, by Corollary~\ref{C:RedH}\ITEM3 again, $\gh\kk\gh\kk\vv_2$ is $\EH$-reduced. Finally, $\uu_1 \ww \gh\kk\gh\kk$ and $\gh\kk\gh\kk \ww $ are $\EH$-reduced, hence, by Corollary~\ref{C:RedH}\ITEM2, $\uu_1 \ww \gh\kk\gh\kk\vv_2$ is $\EH$-reduced. So the two words $\uu_1 \gh\jj \ww \gh\kk \vv_2$ and $\uu_1 \ww \gh\kk\gh\kk \vv_2 $ are $\EH$-reduced, both equivalent to~$\uu\gh\ii$. Hence they must coincide: $\uu_1 \ww \gh\kk\gh\kk \vv_2 =\uu_1\gh\jj \ww \gh\kk \vv_2 $ holds. Deleting the prefix~$\uu_1$ and the suffix~$\gh\kk \vv_2$ on both sides, we deduce
\begin{equation}\label{E:FNSimpl}
 \ww \gh\kk=\gh\jj \ww.
\end{equation}
An induction on~$\lgg\ww$ shows that the word equality (not equivalence)~\eqref{E:FNSimpl} is possible only for~$\jj = \kk$: for $\lgg\ww \ge 2$, a word~$\ww$ satisfying~\eqref{E:FNSimpl} must begin with~$\gh\jj$ and finish with~$\gh\kk$, leading to~$\ww = \gh\jj \ww' \gh\kk$ with~$\ww'$ satisfying~$\ww' \gh\kk=\gh\jj \ww'$. But this contradicts the assumption~$\jj \not= \kk$.

So, the only possibility is $\jj = \kk$. Then \eqref{E:NFSimplD1} and~\eqref{E:NFSimplD2} imply $\lgg{\uu_2} =\nobreak \lgg{\vv_2}$, whence $\uu_2 = \vv_2$, and, from there, $\uu_1=\vv_1$ and $\uu = \vv$, implying $\aa = \bb$. 
\end{proof}

Another application of the normal form in~$\HHp$ is a solution for the word problem of the presentation~$\PH$ that is much more efficient than the ``stupid'' solution of Proposition~\ref{P:WordPbH}: two words~$\ww, \ww'$ represent the same element in~$\HHp$ if, and only if, $\redH\ww$ and~$\redH{\ww'}$ coincide. It is easy to see that, from a word of length~$\ell$, at most $\binom{\ell}{2}$ rules can be applied, leading to a solution for the word problem whose overall complexity is quadratic in~$\ell$. We do not go into details here.

\subsection{Reversing for~$\HHp$}\label{SS:RevH}

Continuing as in Section~\ref{SS:RevF} for~$\FFp$, we now investigate the (right) reversing relation associated with the presentation~$\PH$ of~$\HHp$, in view of possibly establishing that it is left cancellative and admits right lcms. 

For applying Lemma~\ref{L:Rev}, the first step is to check Condition~\eqref{E:CompatS}.

\begin{lemm}\label{L:CompatH}
For every generator~$\gh\ii$ and for every relation~$\ww = \ww'$ of~$\PH$, Condition~\eqref{E:CompatS} is satisfied: for every $\PH$-grid from~$(\gh\ii, \ww)$, there is an equivalent grid from~$(\gh\ii, \ww')$, and vice versa.
\end{lemm}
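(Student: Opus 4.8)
The statement to prove is Condition~\eqref{E:CompatS} for the presentation~$\PH$, namely that for every generator~$\gh\ii$ and every relation $\ww = \ww'$ of~$\PH$, any right-reversing grid starting from~$(\gh\ii, \ww)$ has an equivalent grid starting from~$(\gh\ii, \ww')$, and vice versa. The plan is to reduce the verification to a finite case analysis organized by the position of the index~$\ii$ relative to the indices appearing in the relation, exactly as was done in the proof of Proposition~\ref{P:LcmF} for~$\FFp$, but now keeping track of the extra length-$3$ relations. There are two families of relations in~$\PH$: the length-$2$ Thompson relations $\gh\jj\gh\ii = \gh\ii\gh{\jj+1}$ for $\jj \ge \ii+2$, and the length-$3$ braid-type relations $\gh{\ii+1}\gh\ii\gh{\ii+1} = \gh\ii\gh{\ii+1}\gh{\ii+3}$. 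So I would split into these two cases.

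**First I would handle the length-$2$ relations.** Here the relation is $\ww = \gh\jj\gh\kk$, $\ww' = \gh\kk\gh{\jj+1}$ with $\jj \ge \kk+2$, and I must reverse each of $(\gh\ii,\gh\jj\gh\kk)$ and $(\gh\ii,\gh\kk\gh{\jj+1})$ and check the outputs are $\eqp$-equivalent. Reversing $(\gh\ii, \gh\jj\gh\kk)$ means first reversing the pair $(\gh\ii,\gh\jj)$, feeding the right-hand output into a reversal against~$\gh\kk$, and similarly on the other side. The reversal of a single pair $(\gh\ss,\gh\ttt)$ is governed by whichever relation of~$\PH$ begins $\gh\ss\dots = \gh\ttt\dots$; since $\PH$ is right complemented this output is uniquely determined, and it is either empty (when $\gh\ss=\gh\ttt$), a single letter (from a length-$2$ relation), or a length-$2$ word (from a length-$3$ relation). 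I would tabulate the finitely many order-patterns of $\ii$ against $\jj,\kk$ — roughly whether $\ii$ is far above, adjacent to, or far below each of $\jj$ and $\kk$, and whether $\ii\in\{\jj-1,\jj,\jj+1,\kk-1,\kk,\kk+1\}$ — and in each pattern compute both outputs and verify equivalence. This is bookkeeping-heavy but mechanical, precisely the part the $\FFp$ proof ``skipped as fairly obvious.''

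**The genuinely delicate case is the length-$3$ relation**, $\gh{\ii_0+1}\gh{\ii_0}\gh{\ii_0+1} = \gh{\ii_0}\gh{\ii_0+1}\gh{\ii_0+3}$. Here reversing a generator~$\gh\ii$ against a \emph{length-$3$} word requires three successive single-pair reversals along each row, and because a single reversal can itself produce a length-$2$ output (when it triggers a length-$3$ relation), the grids are no longer the uniform squares seen in Example~\ref{X:Rev}; the reversing rectangle can grow, and one must confirm the two rectangles close up to the same output words up to~$\eqp$. The main obstacle I expect is controlling these index shifts so that the two outputs match: the critical subcases are when $\ii$ lands near the interval $\{\ii_0-1,\dots,\ii_0+4\}$, so that the braid-type relation is actually invoked during reversal and the bookkeeping of shifted indices (the ``$+1$'' and ``$+3$'' shifts) must be tracked exactly. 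I would present one or two representative critical diagrams fully — in the style of the confluence diagrams \eqref{E:LocConfH1}--\eqref{E:LocConfH3} — and argue that the remaining patterns either reduce to the length-$2$ analysis already done or are resolved by the same shift computation; the verification that both reversals terminate with $\eqp$-equal outputs is what closes Condition~\eqref{E:CompatS}, after which Lemma~\ref{L:Rev} applies.
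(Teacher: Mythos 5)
Your plan follows the paper's proof: both arguments reduce the infinite family of checks to finitely many patterns determined by the position of~$\ii$ relative to the indices of the relation (using that $\PH$-grids are unique when they exist, since $\PH$ is right complemented), and both settle the critical patterns by drawing the reversing grids explicitly and checking that their outputs are $\eqp$-equivalent. The paper pins the critical cases down more sharply than your window $\{\ii_0-1,\dots,\ii_0+4\}$: for the length-$3$ relation $\gh\jj\gh{\jj+1}\gh{\jj+3}=\gh{\jj+1}\gh\jj\gh{\jj+1}$ only $\jj=\ii+1$ and $\jj=\ii-2$ fail to be mere index shifts, and---contrary to your expectation that the length-$2$ relations are entirely routine---the case $\ii=\jj+1$, $\kk=\jj+2$ of the relation $\gh\jj\gh{\kk+1}=\gh\kk\gh\jj$ also requires a genuine grid computation with a nontrivial equivalence check such as $\gh1\gh2\gh5\gh7 \eqp \gh3\gh5\gh1\gh2$. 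The one real shortfall is that your text remains a plan: the three critical grids and their output equivalences are never actually computed, and for this lemma those explicit diagrams and $\eqp$-verifications \emph{are} the proof, so a complete write-up must include them.
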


\begin{proof}
Because there exists exactly one relation of the form $\gh\ii ... = \gh\jj...$ in~$\PH$ for all~$\ii, \jj$, a $\PH$-grid is unique when it exists, so we only have to check that the involved grids either exist and are equivalent, or they do not exist. As in the case of~$\FFp$, there are infinitely many generators and relations, but only finitely many patterns occur, according to the relative positions of the indices of the involved generators. In the case of~$\gh\ii$ and the relation $\gh\jj \gh{\jj + 1} \gh{\jj + 3} = \gh{\jj + 1} \gh\jj \gh{\jj + 1}$, the only cases that do not just result in shifting the indices are $\jj = \ii + 1$ and $\jj = \ii - 2$, for which we find (for readability, we draw the diagrams for $\ii = 1, \jj = 2$, and for $\ii = 3, \jj = 1$)
\begin{equation}\label{E:Case4}
\VR(14,11)\begin{picture}(40,0)(0,6)
\pcline{->}(1,16)(19,16)\taput{$\gh2$}
\pcline{->}(21,16)(29,16)\taput{$\gh3$}
\pcline{->}(31,16)(39,16)\taput{$\gh5$}
\pcline{->}(21,8)(29,8)\taput{$\gh4$}
\pcline{->}(31,8)(39,8)\taput{$\ \gh6$}
\pcline{->}(1,0)(9,0)\tbput{$\gh2$}
\pcline{->}(11,0)(19,0)\tbput{$\gh4$}
\pcline{->}(21,0)(29,0)\tbput{$\gh5$}
\pcline{->}(31,0)(39,0)\tbput{$\gh7$}
\pcline{->}(0,15)(0,1)\tlput{$\gh1$}
\pcline{->}(20,15)(20,9)\tlput{$\gh1$}
\pcline{->}(20,7)(20,1)\tlput{$\gh2$}
\pcline{->}(30,15)(30,9)\trput{$\gh1$}
\pcline{->}(30,7)(30,1)\trput{$\gh2$}
\pcline{->}(40,15)(40,9)\trput{$\gh1$}
\pcline{->}(40,7)(40,1)\trput{$\gh2$}
\end{picture}
\hspace{20mm}
\begin{picture}(40,0)(0,6)
\pcline{->}(1,16)(9,16)\taput{$\gh3$}
\pcline{->}(11,16)(29,16)\taput{$\gh2$}
\pcline{->}(31,16)(39,16)\taput{$\gh3$}
\pcline{->}(31,8)(39,8)\taput{$\gh4$}
\pcline{->}(1,0)(9,0)\tbput{$\gh4$}
\pcline{->}(11,0)(19,0)\tbput{$\gh2$}
\pcline{->}(21,0)(29,0)\tbput{$\gh4$}
\pcline{->}(31,0)(39,0)\tbput{$\gh5$}
\pcline{->}(0,15)(0,1)\tlput{$\gh1$}
\pcline{->}(10,15)(10,1)\tlput{$\gh1$}
\pcline{->}(30,15)(30,9)\tlput{$\gh1$}
\pcline{->}(30,7)(30,1)\tlput{$\gh2$}
\pcline{->}(40,15)(40,9)\trput{$\gh1$}
\pcline{->}(40,7)(40,1)\trput{$\gh2$}
\end{picture}
\end{equation}
and we check $\gh2\gh4\gh5\gh7 \eqp \gh2\gh5\gh4\gh5 \eqp \gh4\gh2\gh4\gh5$, so the grids are equivalent, and
\begin{equation}\label{E:Case4}
\VR(15,11)\begin{picture}(40,0)(0,6)
\pcline{->}(1,16)(9,16)\taput{$\gh1$}
\pcline{->}(11,16)(19,16)\taput{$\gh2$}
\pcline{->}(21,16)(39,16)\taput{$\gh4$}
\pcline{->}(1,0)(9,0)\tbput{$\gh1$}
\pcline{->}(11,0)(19,0)\tbput{$\gh2$}
\pcline{->}(21,0)(29,0)\tbput{$\gh4$}
\pcline{->}(31,0)(39,0)\tbput{$\gh5$}
\pcline{->}(0,15)(0,1)\tlput{$\gh3$}
\pcline{->}(10,15)(10,1)\tlput{$\gh4$}
\pcline{->}(20,15)(20,1)\trput{$\gh5$}
\pcline{->}(40,15)(40,9)\trput{$\gh5$}
\pcline{->}(40,7)(40,1)\trput{$\gh7$}
\end{picture}
\hspace{20mm}
\begin{picture}(40,0)(0,6)
\pcline{->}(1,16)(19,16)\taput{$\gh2$}
\pcline{->}(21,16)(29,16)\taput{$\gh1$}
\pcline{->}(31,16)(39,16)\taput{$\gh2$}
\pcline{->}(21,8)(29,8)\taput{$\gh1$}
\pcline{->}(31,8)(39,8)\taput{$\ \gh2$}
\pcline{->}(1,0)(9,0)\tbput{$\gh2$}
\pcline{->}(11,0)(19,0)\tbput{$\gh3$}
\pcline{->}(21,0)(29,0)\tbput{$\gh1$}
\pcline{->}(31,0)(39,0)\tbput{$\gh2$}
\pcline{->}(0,15)(0,1)\tlput{$\gh3$}
\pcline{->}(20,15)(20,9)\tlput{$\gh3$}
\pcline{->}(20,7)(20,1)\tlput{$\gh5$}
\pcline{->}(30,15)(30,9)\trput{$\gh4$}
\pcline{->}(30,7)(30,1)\trput{$\gh6$}
\pcline{->}(40,15)(40,9)\trput{$\gh5$}
\pcline{->}(40,7)(40,1)\trput{$\gh7$}
\end{picture}
\end{equation}
and we check $\gh1\gh2\gh4\gh5 \eqp \gh2\gh1\gh2\gh5 \eqp \gh2\gh1\gh4\gh2 \eqp \gh2\gh3\gh1\gh2$, so the grids are equivalent. Similarly, in the case of~$\gh\ii$ and a relation $\gh\jj \gh{\kk + 1} = \gh{\kk} \gh\jj$ with $\kk \ge \jj + 2$, the only nontrivial case is for $\ii = \jj + 1$ and $\kk = \jj + 2$, where we find (here for $\ii = 2$, $\jj = 1$, and $\kk = 3$)
\begin{equation}\label{E:Case4}
\VR(18,15)\begin{picture}(40,0)(0,10)
\pcline{->}(1,24)(19,24)\taput{$\gh1$}
\pcline{->}(21,24)(39,24)\taput{$\gh4$}
\pcline{->}(21,16)(39,16)\taput{$\gh5$}
\pcline{->}(1,0)(9,0)\tbput{$\gh1$}
\pcline{->}(11,0)(19,0)\tbput{$\gh2$}
\pcline{->}(21,0)(29,0)\tbput{$\gh5$}
\pcline{->}(31,0)(39,0)\tbput{$\gh7$}
\pcline{->}(0,23)(0,1)\tlput{$\gh2$}
\pcline{->}(20,23)(20,17)\tlput{$\gh2$}
\pcline{->}(20,15)(20,1)\tlput{$\gh4$}
\pcline{->}(40,23)(40,17)\trput{$\gh2$}
\pcline{->}(40,15)(40,9)\trput{$\gh4$}
\pcline{->}(40,7)(40,1)\trput{$\gh5$}
\end{picture}
\hspace{20mm}
\begin{picture}(40,0)(0,10)
\pcline{->}(1,24)(19,24)\taput{$\gh3$}
\pcline{->}(21,24)(39,24)\taput{$\gh1$}
\pcline{->}(21,8)(29,8)\taput{$\gh1$}
\pcline{->}(31,8)(39,8)\taput{$\ \gh2$}
\pcline{->}(1,0)(9,0)\tbput{$\gh3$}
\pcline{->}(11,0)(19,0)\tbput{$\gh5$}
\pcline{->}(21,0)(29,0)\tbput{$\gh1$}
\pcline{->}(31,0)(39,0)\tbput{$\gh2$}
\pcline{->}(0,23)(0,1)\tlput{$\gh2$}
\pcline{->}(20,23)(20,9)\tlput{$\gh2$}
\pcline{->}(20,7)(20,1)\tlput{$\gh3$}
\pcline{->}(30,7)(30,1)\trput{$\gh4$}
\pcline{->}(40,23)(40,17)\trput{$\gh2$}
\pcline{->}(40,15)(40,9)\trput{$\gh4$}
\pcline{->}(40,7)(40,1)\trput{$\gh5$}
\end{picture}
\end{equation}
and we check $\gh1\gh2\gh5\gh7 \eqp \gh1\gh4\gh2\gh7 \eqp \gh3\gh1\gh2\gh7 \eqp \gh3\gh1\gh6\gh2 \eqp \gh3\gh5\gh1\gh2$, so the grids are equivalent. 
\end{proof}

Lemma~\ref{L:CompatH} shows that the presentation~$\PH$, which is homogeneous, is eligible for Lemma~\ref{L:Rev}. So, in particular, for all~$\uu, \vv$ in~$\AHs$, we have the equivalence
\begin{equation}\label{E:EqRev}
\uu \eqp \vv \quad \Longleftrightarrow \quad (\uu, \vv) \rev (\ew, \ew),
\end{equation}
that is, $\uu$ and $\vv$ represent the same element of~$\HHp$ if, and only if there exists a $\PH$-grid from~$(\uu, \vv)$ to~$(\ew, \ew)$. As an application, we deduce

\begin{prop}\label{P:Canc}
The monoid~$\HHp$ is left and right cancellative.
\end{prop}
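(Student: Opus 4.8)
The plan is to obtain left cancellativity directly from the reversing machinery encapsulated in Lemma~\ref{L:Rev}, and then to read off right cancellativity for free from Proposition~\ref{P:RCanc}.

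First I would check that the presentation~$\PH$ meets every hypothesis of Lemma~\ref{L:Rev}. Homogeneity was already observed in Section~\ref{SS:Hp}. For the right-complemented condition, I would read each relation with the higher-indexed generator on the left: the length~$2$ relations $\gh\jj\gh\ii = \gh\ii\gh{\jj+1}$ account for the pairs with $\jj\ge\ii+2$, and the length~$3$ relations $\gh{\ii+1}\gh\ii\gh{\ii+1} = \gh\ii\gh{\ii+1}\gh{\ii+3}$ account for the adjacent pairs $\jj=\ii+1$. In every relation the two sides begin with distinct generators, so no relation has the same generator at the start of both sides, and for each unordered pair of distinct generators there is exactly one relation, whose two sides begin with the two generators respectively; hence $\PH$ is right complemented. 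Condition~\eqref{E:CompatS} is precisely the content of Lemma~\ref{L:CompatH}, which was established by the case analysis on the relative positions of the indices.

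With all hypotheses verified, Lemma~\ref{L:Rev}\ITEM2 applies and yields that $\HHp$ is left cancellative. Finally, Proposition~\ref{P:RCanc} asserts that left cancellativity of~$\HHp$ forces right cancellativity, so $\HHp$ is cancellative on both sides, as claimed. The proof is essentially an assembly of earlier results, so I expect no serious obstacle here: the genuine work has already been carried out in Lemma~\ref{L:CompatH} (verification of~\eqref{E:CompatS}) and in Proposition~\ref{P:RCanc} (the bridge from left to right cancellativity). The only point requiring a little care is matching the exact hypotheses of Lemma~\ref{L:Rev}, in particular confirming the right-complemented condition; note that I do \emph{not} invoke the special case of that lemma guaranteeing the existence of right lcms for \emph{all} pairs, since the adjacent pairs are governed by length~$3$ rather than length~$2$ relations and the sufficient condition there ($\gh\ss\gh{\ttt'}=\gh\ttt\gh{\ss'}$ a relation) is not met for them.
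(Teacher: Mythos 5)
Your proposal is correct and follows exactly the paper's route: left cancellativity comes from Lemma~\ref{L:Rev}\ITEM2 once $\PH$ is verified to be homogeneous, right complemented, and to satisfy Condition~\eqref{E:CompatS} via Lemma~\ref{L:CompatH}, and right cancellativity then follows from Proposition~\ref{P:RCanc}. Your explicit check of the right-complemented condition and your remark that the lcm-existence clause of Lemma~\ref{L:Rev}\ITEM3 is neither needed nor available here are both accurate, just spelled out in more detail than the paper's two-line proof.
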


\begin{proof}
Lemma~\ref{L:Rev}\ITEM2 implies that $\HHp$ is left cancellative. By Proposition~\ref{P:RCanc}, this implies that $\HHp$ is right cancellative as well.
\end{proof}

As for common right multiples, owing to the fact that a $\PH$-grid with a given source is unique when it exists, Lemma~\ref{L:Rev}\ITEM3 directly implies:

\begin{prop}\label{P:CommMult}
Two elements~$\cl\uu, \cl\vv$ of~$\HHp$ admit a common right multiple in~$\HHp$ if, and only if, there exists a $\PH$-grid from~$(\uu, \vv)$; in this case, $\cl\uu$ and~$\cl\vv$ admit a right lcm, represented by~$\uu\vv_1$ with $(\uu_1, \vv_1)$ the output of the grid from~$(\uu, \vv)$.
\end{prop}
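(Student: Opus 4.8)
The plan is to read the statement off Lemma~\ref{L:Rev}\ITEM3, whose three hypotheses I would first record as being satisfied by~$\PH$: it is homogeneous (as noted when the presentation was introduced), it is right complemented since for every ordered pair $\ii \not= \jj$ there is exactly one relation of the form $\gh\ii\ldots = \gh\jj\ldots$ in~$\PH$, and it satisfies Condition~\eqref{E:CompatS} by Lemma~\ref{L:CompatH}. With these in hand, Lemma~\ref{L:Rev}\ITEM3 asserts that $\cl\uu$ and~$\cl\vv$ admit a common right multiple if, and only if, $(\uu, \vv) \rev (\uu_1, \vv_1)$ holds for some $\uu_1, \vv_1$ in~$\AHs$---that is, precisely when a $\PH$-grid from~$(\uu, \vv)$ exists---and that, in that case, the element represented by~$\uu\vv_1$ is a right lcm of~$\cl\uu$ and~$\cl\vv$. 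This already gives both the equivalence and the formula, up to one point of phrasing.

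The point to settle is the passage from ``for some output~$(\uu_1, \vv_1)$'' to ``\emph{the} output~$(\uu_1, \vv_1)$'', namely that a $\PH$-grid is determined by its source. I would argue this cell by cell, using right complementedness: in an elementary piece of Definition~\ref{D:Rev} the bottom and right labels are fixed by the top and left labels. When the two incoming labels are equal generators, or one of them is~$\ew$, the relevant piece is one of the four completion squares and is unique; when they are two distinct generators~$\gh\ii$ (left) and~$\gh\jj$ (top), a suitable piece exists exactly when $\PH$ contains the---necessarily unique---relation $\gh\ii\ldots = \gh\jj\ldots$, and then that piece is unique too. Filling the rectangle diagonal by diagonal, one sees that the whole grid, if it can be completed at all, is uniquely determined by~$(\uu, \vv)$; in particular its output~$(\uu_1, \vv_1)$ is well defined, which is what legitimates the wording of the statement.

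I expect no genuinely hard step here: the substantive work---the verification of Condition~\eqref{E:CompatS}---was already carried out in Lemma~\ref{L:CompatH}, so the present proof is essentially a bookkeeping of hypotheses followed by an invocation of Lemma~\ref{L:Rev}\ITEM3. The one place deserving a word of care is the uniqueness of the grid, which rests squarely on right complementedness; without it, two distinct relations $\gh\ii\ldots = \gh\jj\ldots$ could produce two distinct completions, and the phrase ``the output of the grid'' would be ambiguous. Since $\PH$ is right complemented, this difficulty does not arise, and the proof closes immediately.
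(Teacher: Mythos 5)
Your proposal is correct and follows exactly the paper's route: the paper likewise obtains the statement as a direct application of Lemma~\ref{L:Rev}\ITEM3, the hypotheses being homogeneity, right complementedness (exactly one relation $\gh\ii\ldots=\gh\jj\ldots$ for each pair $\ii\not=\jj$), and Condition~\eqref{E:CompatS} as supplied by Lemma~\ref{L:CompatH}, with the uniqueness of a $\PH$-grid with given source invoked to justify speaking of \emph{the} output. Your cell-by-cell justification of that uniqueness is the one point you spell out in more detail than the paper, which merely asserts it; nothing is missing.
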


Proposition~\ref{P:CommMult} is optimal: there exist elements of~$\HHp$ without a common right multiple, typically~$\gh2$ and~$\gh1\gh3$. Indeed, if we try to construct a $\PH$-grid from~$(\gh2, \gh1\gh3)$, we must start with
\begin{equation*}
\VR(12,10)\begin{picture}(40,0)(0,7)
\pcline{->}(1,16)(19,16)\taput{$\gh1$}
\pcline{->}(21,16)(39,16)\taput{$\gh3$}
\pcline{->}(1,0)(9,0)\tbput{$\gh1$}
\pcline{->}(11,0)(19,0)\tbput{$\gh2$}
\pcline{->}(0,15)(0,1)\tlput{$\gh2$}
\pcline{->}(20,15)(20,9)\tlput{$\gh2$}
\pcline{->}(20,7)(20,1)\tlput{$\gh4$}
\end{picture}
\end{equation*}
and the process cannot terminate, since the pending pattern $(\gh2\gh4, \gh3)$ is, up to a symmetry, the image of $(\gh2, \gh1\gh3)$ under shifting the indices. By Proposition~\ref{P:CommMult}, this is enough to conclude that $\gh2$ and~$\gh1\gh3$ admit no common right multiple in~$\HHp$.

\begin{rema}
In the above example, the non-existence of a common right multiple is easily established, as constructing a $\PH$-grid enters an explicit non-terminating loop. The general question of the existence of a $\PH$-grid from a given pair of words is a priori difficult, and it is not clear whether it is algorithmically decidable. In fact, it is, but this is nontrivial. The method consists in identifying an explicit family of words~$\AHp$ that is closed under reversing, in the sense that, if $\uu$ and~$\vv$ belong to~$\AHp$ and $(\uu, \vv) \rev (\uu_1, \vv_1)$ holds, then $\uu_1$ and~$\vv_1$ lie in~$\AHp$. Then, one easily shows that, if the existence of common multiples can be decided for pairs of words of~$\AHp$, it can decided for arbitrary pairs of words. So the question is to find a convenient family~$\AHp$ and analyze the existence of $\PH$-grids for words of~$\AHp$. In the case of~$\PH$, this program is successfully completed in~\cite{Tes} for $\AHp:= \AH_1 \cup \AH_2 \cup \{\ew\}$ with
\begin{gather}\label{E:TMClosH1}
\AH_1 := \{\gh\ii\gh{\ii+2}\gh{\ii+4}\pdots\gh{\ii+2\kk} \mid \ii\ge 1, \kk \ge 0\}, \\
\label{E:TMClosH2}
\AH_2 := \{\gh\ii\gh{\ii+2}\gh{\ii+4}\pdots\gh{\ii+2\kk} \gh{\ii+2\kk+1} \mid \ii\ge 1, \kk \ge 0\},
\end{gather} 
Therefore the existence of common right multiples in~$\HHp$ is decidable.
\end{rema}

Trying to apply the same approach to studying right cancellativity and left multiples in~$\HHp$ fails. Indeed, one easily checks that the left diagram below is a legitimate ``left $\PH$-grid'' from~$(\gh6, \gh2 \gh1 \gh2)$, whereas the right diagram shows that constructing an equivalent left $\PH$-grid from~$(\gh6, \gh1 \gh2 \gh4)$ fails, since there is no relation $... \gh2 = ... \gh3$ in~$\PH$: 
\begin{equation}
\VR(15,11)\begin{picture}(40,0)(0,6)
\pcline{->}(1,16)(9,16)\taput{$\gh1$}
\pcline{->}(11,16)(19,16)\taput{$\gh2$}
\pcline{->}(21,16)(29,16)\taput{$\gh1$}
\pcline{->}(31,16)(39,16)\taput{$\gh2$}
\pcline{->}(1,0)(19,0)\tbput{$\gh2$}
\pcline{->}(21,0)(29,0)\tbput{$\gh1$}
\pcline{->}(31,0)(39,0)\tbput{$\gh2$}
\pcline{->}(0,15)(0,9)\tlput{$\gh2$}
\pcline{->}(0,7)(0,1)\tlput{$\gh1$}
\pcline{->}(20,15)(20,1)\tlput{$\gh4$}
\pcline{->}(30,15)(30,1)\trput{$\gh5$}
\pcline{->}(40,15)(40,1)\trput{$\gh6$}
\end{picture}
\hspace{15mm}
\begin{picture}(40,0)(0,6)
\pcline{->}(21,16)(29,16)\taput{$\gh3$}
\pcline{->}(31,16)(39,16)\taput{$\gh4$}
\pcline{->}(1,0)(9,0)\tbput{$\gh1$}
\pcline{->}(11,0)(19,0)\tbput{$\gh2$}
\pcline{->}(21,0)(39,0)\tbput{$\gh4$}
\pcline{->}(20,15)(20,9)\tlput{$\gh4$}
\pcline{->}(20,7)(20,1)\tlput{$\gh3$}
\pcline{->}(40,15)(40,1)\trput{$\gh6$}
\end{picture}
\end{equation}
So the counterpart of Condition~\eqref{E:CompatS} fails for~$\gh6$ and the relation $\gh1\gh2 \gh4 = \gh2 \gh1 \gh2$, and the counterpart of Lemma~\ref{L:Rev} cannot be appealed to. This says nothing about~$\HHp$, but just leaves the questions open. As for right cancellativity, the question was solved using the normal form of Section~\ref{SS:NFH}. As for left multiples, nothing is clear, in particular about the possible existence of left lcms. However, what is clear is that, for instance, $\gh\ii$ and~$\gh{\ii + 1}$ admit no common left multiple, since their projections in~$\FFp$ do not.

\section{Garside combinatorics for~$\HHp$}\label{S:Delta}

We now show that the monoid~$\HHp$ admits interesting combinatorial properties similar to those of~$\FFp$, in connection with distinguished elements~$\Dt\nn$ defined as right lcms of atoms, and with the left divisors of the latter, called simple elements. So our goal is to establish for~$\HHp$ results similar to those of Section~\ref{SS:GarF}. We shall see that the results are indeed similar, but with more delicate and interesting proofs.

\subsection{The elements~$\Dt\nn$ and their divisors}\label{SS:Delta}

The atoms of the monoid~$\HHp$ are the elements~$\gh\ii$ with $\ii \ge 1$. On the shape of the braid and Thompson cases, we shall introduce for every~$\nn$ a distinguished element of~$\HHp$, again denoted~$\Dt\nn$, which is the right lcm of the first $\nn - 1$~atoms. As in Section~\ref{S:Fp}, it will be convenient to start from explicit word representatives. Moreover, in view of subsequent computations, we shall simultaneously introduce, for every~$\nn$, an element~$\Dt{\nn + 0.5}$ that is intermediate between~$\Dt{\nn}$ and~$\Dt{\nn + 1}$.

\begin{defi}\label{D:Delta}	
We put $\Dtt1 = \Dtt{1.5} := \ew$, $\Dtt2 := \gh1$, and, for $\nn \ge 2$, 
\begin{equation}\label{E:DeltaInd}
\Dtt\nn :=\Dtt{\nn - 1}\gh{3\nn-7} \gh{3\nn-5}\quad \text{and}\quad \Dtt{\nn + 0.5}:= \Dtt\nn \gh{3\nn - 4}.
\end{equation}
We denote by~$\Dt\nn$ (\resp~$\Dt{\nn + 0.5}$) the class of~$\Dtt\nn$ (\resp~$\Dtt{\nn + 0.5}$) in~$\HHp$.
\end{defi}

So, by definition, the word~$\Dtt\nn$ (\resp $\Dtt{\nn + 0.5}$) is the increasing enumeration from~$1$ to~$3\nn - 5$ (\resp $3\nn - 4$) of all generators~$\gh\ii$ with $\ii \not= 0 \mod3$. We immediately obtain for every~$\nn \ge 2$
\begin{equation}\label{E:Demi2}
\Dt\nn \dive \Dt{\nn + 0.5} \dive \Dt{\nn + 1},
\end{equation}
where we recall~$\dive$ denotes the left divisibility relation. 
For~$\nn \ge 3$, the word~$\Dtt\nn$ is not $\EH$-reduced; an easy induction gives the values 
\begin{gather}
\label{E:FNDt}
\NF(\Dt\nn) = \gh{\nn - 1} \cdot \gh{\nn - 2}\gh{\nn - 1} \cdot \gh{\nn - 3}\gh{\nn - 2}\cdot \pdots \cdot \gh2 \gh3 \cdot \gh1 \gh2,\\
\label{E:FNDthalf}
\NF(\Dt{\nn+0.5}) = \gh{\nn - 1}\gh\nn \cdot \gh{\nn - 2}\gh{\nn - 1} \cdot \gh{\nn - 3}\gh{\nn - 2}\cdot \pdots \cdot \gh2 \gh3 \cdot \gh1 \gh2.
\end{gather}
Note that \eqref{E:FNDthalf} implies that $\Dt{\nn + 0.5}$, which left divides~$\Dt{\nn + 1.5}$ by~\eqref{E:Demi2}, also right divides it. It also implies, for $\nn \ge 2$, the equality
\begin{equation}\label{E:Delta1}
\Dt\nn = \gh{\nn-1} \Dt{\nn - 0.5}.
\end{equation} 

The first step in studying the element~$\Dt\nn$ is to establish that it is indeed the right lcm of the expected atoms. 

\begin{lemm}\label{L:AtomDiv}
For every~$\nn \ge 2$, the element~$\Dt\nn$ is the right lcm of~$\gh1 \wdots \gh{\nn-1}$. No element~$\gh\ii$ with~$\ii \ge \nn$ left divides either~$\Dt\nn$ or~$\Dt{\nn + 0.5}$.
\end{lemm}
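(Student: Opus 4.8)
The plan is to establish the two assertions separately, in both cases through the reversing computation of Proposition~\ref{P:CommMult}, and to prove the right-lcm statement by induction on~$\nn$.

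\textbf{The right lcm.} I would induct on $\nn\ge2$, the case $\nn=2$ being trivial since $\Dt2=\cl{\gh1}$. For the inductive step the key point is that $\Dt\nn$ is the right lcm of the two elements $\gh{\nn-1}$ and $\Dt{\nn-1}$; granting this, I combine it with the induction hypothesis that $\Dt{\nn-1}$ is the right lcm of $\gh1\wdots\gh{\nn-2}$ and with the elementary, purely order-theoretic fact (valid because $\dive$ is a partial order on~$\HHp$, $\HHp$ being cancellative by Proposition~\ref{P:Canc} and homogeneous) that the right lcm of $\gh1\wdots\gh{\nn-1}$ coincides with the right lcm of the partial lcm $\Dt{\nn-1}$ and the remaining atom $\gh{\nn-1}$. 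To obtain the key point, I reverse the pair $(\gh{\nn-1},\Dtt{\nn-1})$ and aim to prove
$$(\gh{\nn-1},\Dtt{\nn-1})\rev(\gh{3\nn-7}\gh{3\nn-5},\Dtt{\nn-0.5}),$$
whence, by Proposition~\ref{P:CommMult}, the right lcm is represented by $\Dtt{\nn-1}\cdot\gh{3\nn-7}\gh{3\nn-5}=\Dtt\nn$, consistently with $\Dt\nn=\gh{\nn-1}\Dt{\nn-0.5}$ from~\eqref{E:Delta1}.

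The reversing computation is the substantive part. I would follow the single inverted generator $\gh{\nn-1}$ as it is pushed rightwards through the letters of $\Dtt{\nn-1}$, which is the increasing list of the generators of index $\not\equiv0\pmod3$ up to $3\nn-8$. Writing $r$ for the index of the currently inverted generator, one computes that, just before the $\kk$-th letter is met, $r$ exceeds that letter's index by exactly $\nn-1-\lceil \kk/2\rceil$; this gap is $\ge2$ for every letter except the last, where it equals~$1$. Hence only the length-$2$ relations of~$\PH$ fire (each raising $r$ by one and releasing the passed letter into the bottom word) until the final letter, at which the length-$3$ relation fires exactly once; tracking the released letters then shows that the bottom word is $\Dtt{\nn-1}\gh{3\nn-7}=\Dtt{\nn-0.5}$ and that the surviving right word is $\gh{3\nn-7}\gh{3\nn-5}$.

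\textbf{Non-divisibility.} Here I argue directly, without induction, by reversing $\gh\ii$ against $\Dtt\nn$ (\resp~$\Dtt{\nn+0.5}$) for $\ii\ge\nn$ and showing that the right component of the output is nonempty. The same bookkeeping now gives a gap $\ii-\lceil \kk/2\rceil\ge1$ throughout, so the reversing terminates; for $\ii>\nn$ it consists only of length-$2$ steps and ends with the single right letter $\gh{\ii+2\nn-3}$ (\resp~$\gh{\ii+2\nn-2}$), whereas for the borderline case $\ii=\nn$ a single length-$3$ relation fires at the last letter of $\Dtt\nn$, leaving the right word $\gh{3\nn-4}\gh{3\nn-2}$ (\resp, after its front letter cancels against the extra final generator of $\Dtt{\nn+0.5}$, the single letter $\gh{3\nn-2}$). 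In every case the right word is nonempty, so the right lcm of $\gh\ii$ and $\Dt\nn$ (\resp~$\Dt{\nn+0.5}$) is strictly longer than $\Dt\nn$ (\resp~$\Dt{\nn+0.5}$); as $\PH$ is homogeneous, this prevents $\gh\ii$ from left dividing it.

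The main obstacle is the reversing bookkeeping itself: one must verify rigorously that the index gap stays $\ge2$, so that no length-$3$ relation fires before the predicted step, and that it never reaches~$0$, which would instead trigger a cancellation; one must then handle the unique collision step with care, including the subsequent cancellation against the appended generator in the $\Dt{\nn+0.5}$ case. This is exactly the kind of delicate index-tracking induction announced in the introduction.
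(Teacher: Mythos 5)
Your proposal is correct and follows the paper's own proof essentially step for step: the same induction on~$\nn$, the same key reversing computation $(\gh{\nn-1},\Dtt{\nn-1})\rev(\gh{3\nn-7}\gh{3\nn-5},\Dtt{\nn-0.5})$ exploited via Lemma~\ref{L:Rev}\ITEM3, and the same length argument from the nonempty right output to rule out $\gh\ii\dive\Dt{\nn+0.5}$ for $\ii\ge\nn$. The only (harmless) divergences are that you spell out the index-gap bookkeeping that the paper leaves as ``a direct computation'', and that you reverse $\gh\ii$ against both $\Dtt\nn$ and $\Dtt{\nn+0.5}$ where the paper treats only $\Dtt{\nn+0.5}$ and then invokes $\Dt\nn\dive\Dt{\nn+0.5}$.
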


\begin{proof}
We first prove using induction on~$\nn \ge 2$ that $\Dt\nn$ is the right lcm of~$\gh1 \wdots \gh{\nn - 1}$. The result is trivial for $\nn \ge 2$, so assume $\nn \ge 3$. A direct computation gives
$$(\gh{\nn - 1}, \Dtt{\nn - 1}) \rev (\gh{3\nn - 7} \gh{3\nn - 5}, \Dtt{\nn - 0.5}).$$
By Lemma~\ref{L:Rev}\ITEM3, this implies that $\Dtt\nn$ represents the right lcm of~$\gh{\nn - 1}$ and~$\Dt{\nn - 1}$. By induction hypothesis, $\Dt{\nn - 1}$ is the right lcm of~$\gh1 \wdots \gh{\nn - 2}$, so $\Dt\nn$ is the right lcm of~$\gh1 \wdots \gh{\nn - 1}$. On the other hand, for $\ii \ge \nn$, similar computations give
$$(\gh\ii, \Dtt{\nn + 0.5}) \rev (\gh{\ii + 2\nn - 2}, \Dtt{\nn + 0.5}),$$
showing that the right lcm of~$\gh\nn$ and~$\Dt{\nn + 0.5}$ exists but is not~$\Dt{\nn + 0.5}$, so $\gh\ii$ does not left divide~$\Dt{\nn + 0.5}$. Hence, by~\eqref{E:Demi2}, $\gh\ii$ does not left divide~$\Dt\nn$ either. 
\end{proof}

We already mentioned that $\gh1$ and~$\gh2$ admit no common left multiple in~$\HHp$: it follows that $\Dt\nn$ cannot be a left lcm for~$\gh1 \wdots \gh{\nn - 1}$.

\begin{defi}\label{D:Simple}
An element~$\aa$ of~$\HHp$ is called \emph{simple} if $\aa \dive \Dt\nn$ holds for some~$\nn$; in this case, the least such~$\nn$ is called the \emph{index} of~$\aa$, denoted by~$\ind\aa$. For~$\nn \ge 1$ and~$\ell \ge 0$, we put
\begin{equation*}
\SPP\nn\ell:= \{\aa \in \HHp \mid \aa \dive \Dt\nn \ \text{and}\ \lgg\aa = \ell\}, \quad \text{and}\quad 
\SP\nn:= \bigcup\nolimits_{\ell \ge 0}\SPP\nn\ell.
\end{equation*}
\end{defi}

For instance, $\SP3$ is the family of all left divisors of~$\Dt3$; one easily checks that it consists of the six elements~$1$, $\gh1$, $\gh2$, $\gh1\gh2$, $\gh2\gh1$, and~$\Dt3$. On the other hand, Lemma~\ref{L:AtomDiv} implies that $\SPP\nn1$ is equal to~$\{\gh1 \wdots \gh{\nn - 1}\}$ for every~$\nn$. 

As $\Dt\nn$ left divides~$\Dt{\nn + 1}$ for every~$\nn$, if $\aa$ is simple, the values of~$\nn$ satisfying $\aa \dive \Dt\nn$ make an interval~$[\pp, \infty[$, and $\ind\aa$ is the number~$\pp$. Thus, $\aa \dive \Dt\nn$ is equivalent to~$\ind\aa \le \nn$, and $\ind\aa = \nn$ is equivalent to the conjunction of~$\aa \dive \Dt\nn$ and~$\aa \not\dive \Dt{\nn - 1}$.

We shall subsequently need an upper bound for the ceiling of simple elements (as introduced in Lemma~\ref{L:Ceiling}). This can be deduced from Lemma~\ref{L:AtomDiv}.

\begin{lemm}\label{L:PlafSimpl}
For $\aa \dive \Dt{\nn + 0.5}$ in~$\HHp$ with $\nn \ge 2$, one has
\begin{equation}\label{E:PlafSimpl1}
\plf\aa \le \nn+ \lgg\aa -2.
\end{equation}
\end{lemm}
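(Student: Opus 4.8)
The plan is to reduce the statement to a single sub-additivity property of the ceiling under right multiplication, avoiding any induction. First I would record how $\plf{\cdot}$ behaves under concatenation: if $\ww = \gh{\ii_1} \pdots \gh{\ii_\kk}$ and $\ww' = \gh{\jj_1} \pdots \gh{\jj_\mm}$ are nonempty words of~$\AHs$, then splitting the defining maximum of $\plf{\ww\ww'}$ according to whether the maximizing position falls inside~$\ww$ or inside~$\ww'$ yields
\begin{equation*}
\plf{\ww\ww'} = \max\bigl(\plf\ww + \mm,\ \plf{\ww'}\bigr),
\end{equation*}
since a position $\pp \le \kk$ contributes $\ii_\pp + (\kk + \mm) - \pp = \mm + (\ii_\pp + \kk - \pp)$ while a position $\kk + \qq$ contributes $\jj_\qq + \mm - \qq$. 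Because $\plf{\cdot}$ is $\eqp$-invariant by Lemma~\ref{L:Ceiling}, this descends to elements: adopting the convention $\plf1 = 0$ (consistent because every nonempty word satisfies $\plf\ww \ge \lgg\ww$, the position $\pp = 1$ already contributing $\ii_1 + \lgg\ww - 1 \ge \lgg\ww$), one gets $\plf{\aa\bb} = \max(\plf\aa + \lgg\bb, \plf\bb)$ for all $\aa, \bb$ in~$\HHp$. The only consequence I shall use is the inequality $\plf\aa + \lgg\bb \le \plf{\aa\bb}$.

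Next I would read off the two numerical invariants of the target element from its explicit representative~$\Dtt{\nn + 0.5}$. By Definition~\ref{D:Delta}, $\Dtt{\nn + 0.5}$ is the increasing enumeration of the generators $\gh\ii$ with $1 \le \ii \le 3\nn - 4$ and $\ii \not\equiv 0 \bmod 3$; counting these (there are $\nn - 2$ multiples of~$3$ in the range) gives $\lgg{\Dt{\nn + 0.5}} = 2\nn - 2$, and the word ends with~$\gh{3\nn - 4}$. Writing $\Dtt{\nn+0.5} = \gh{\ii_1}\pdots\gh{\ii_\ell}$ with $\ell = 2\nn - 2$, the sequence $\ii_\pp - \pp$ is non-decreasing (consecutive indices differ by $1$ or~$2$), so the maximum of $\ii_\pp + \ell - \pp$ is reached at $\pp = \ell$, giving $\plf{\Dt{\nn + 0.5}} = \ii_\ell = 3\nn - 4$ (for the argument only the upper bound $\plf{\Dt{\nn + 0.5}} \le 3\nn - 4$ is actually needed).

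Finally I would combine the two. Assuming $\aa \dive \Dt{\nn + 0.5}$, write $\Dt{\nn + 0.5} = \aa\bb$ with $\bb$ in~$\HHp$; homogeneity of~$\PH$ forces $\lgg\bb = (2\nn - 2) - \lgg\aa$. The sub-additivity inequality then reads $\plf\aa + (2\nn - 2 - \lgg\aa) \le 3\nn - 4$, which rearranges at once to $\plf\aa \le \nn + \lgg\aa - 2$, as desired. The degenerate cases are harmless: $\bb = 1$ gives equality, and $\aa = 1$ is covered by $\plf1 = 0 \le \nn - 2$ for $\nn \ge 2$.

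As for difficulty, there is no genuine obstacle once the concatenation formula is in place: the whole content is that the ceiling is sub-additive under right multiplication, so a bound for a left divisor follows mechanically from the values of $\lgg{\cdot}$ and $\plf{\cdot}$ on~$\Dt{\nn + 0.5}$. The only points requiring care are the empty-word bookkeeping and the verification that $\plf\ww \ge \lgg\ww$, which is precisely what keeps the concatenation formula valid at the boundary. I note that this route is self-contained and does not in fact invoke Lemma~\ref{L:AtomDiv}; the alternative, following the text's hint, would be to use Lemma~\ref{L:AtomDiv} to control which atoms can head a divisor, but the direct ceiling computation above seems to me the cleaner path.
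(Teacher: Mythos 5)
Your proof is correct. The concatenation identity $\plf{\ww\ww'} = \max(\plf\ww + \lgg{\ww'},\, \plf{\ww'})$ is exactly the computation hidden in the paper's proof of Lemma~\ref{L:Ceiling}, your count $\lgg{\Dt{\nn+0.5}} = 2\nn-2$ and the value $\plf{\Dt{\nn+0.5}} = 3\nn-4$ both check out against Definition~\ref{D:Delta}, and the boundary conventions ($\plf1 = 0$, the inequality $\plf\ww \ge \lgg\ww$) are handled properly, so the one-line rearrangement at the end is valid. The paper proceeds differently: it inducts on $\lgg\aa$, using Lemma~\ref{L:AtomDiv} for the base case $\lgg\aa = 1$, and in the inductive step writes $\aa = \bb\gh\ii$ with $\bb\gh\ii\cc = \Dt{\nn+0.5}$, bounds the index~$\ii$ by observing that the last letter of~$\aa$ contributes $\ii + \lgg\cc$ to $\plf{\Dt{\nn+0.5}} = 3\nn-4$, and concludes via $\plf{\bb\gh\ii} = \max(\plf\bb + 1, \ii)$. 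Your argument packages that same position-by-position bookkeeping into a single sub-additivity inequality $\plf\aa + \lgg\bb \le \plf{\aa\bb}$ applied once to the factorization $\Dt{\nn+0.5} = \aa\bb$; this eliminates both the induction and the appeal to Lemma~\ref{L:AtomDiv}, and makes transparent that the bound is just the ceiling of the ambient element minus the length of the complementary factor. What the paper's route buys in exchange is essentially nothing here --- its inductive scaffolding is heavier than needed for this particular statement --- so your version is the cleaner of the two, though the underlying numerical content (each position's contribution to the ceiling of $\Dt{\nn+0.5}$ is capped at $3\nn-4$) is identical.
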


\begin{proof}
Use induction on $\lgg\aa \ge1$. For $\lgg\aa = 1$, Lemma~\ref{L:AtomDiv} implies $\aa = \gh\ii$ with $\ii \le \nn-1$, whence $\plf\aa = \ii \le \nn-1 = \nn + \lgg\aa -2$.
	
Assume now $\lgg\aa \ge 2$, and write $\aa = \bb\gh\ii$. Then $\bb \dive \Dt{\nn + 0.5}$ holds as well, and the induction hypothesis implies $\plf{\bb}\le\nn+\lgg\aa-3$. Write $\bb\gh\ii\cc = \Dt{\nn + 0.5}$. By definition, the contribution of~$\gh\ii$ to the ceiling of~$\Dt{\nn + 0.5}$ is $\ii + \lgg\cc$, \ie, $\ii+2\nn-\lgg\aa-2$. The explicit definition of~\eqref{E:DeltaInd} gives $\plf{\Dt{\nn + 0.5}} = 3\nn - 4$, and we deduce $\ii \le \nn +\lgg\aa-2$, whence, finally, $\plf\aa = \plf{\bb \gh\ii} = \max(\plf{\bb} + 1, \ii) \le \nn+\lgg\aa-2$.
\end{proof}

\subsection{Expressions of~$\Dt\nn$ and~$\Dt{\nn + 0.5}$}\label{SS:Prepa}

Our aim is to analyze simple elements of~$\HHp$ precisely. To this end, it will be crucial to control the various expressions of the elements~$\Dt\nn$ and~$\Dt{\nn + 0.5}$, and we establish here technical results in this direction. Obtaining a complete description as in Lemma~\ref{L:ExpDtF} seems hopeless, but it will be sufficient to connect the expressions of~$\Dt\nn$ with those of~$\Dt{\nn - 0.5}$ (\ie, of~$\Dt{\nn-1}\gh{3\nn-7}$). A similar connection will be stated between the expressions of~$\Dt{\nn - 0.5}$ and~$\Dt{\nn - 1.5}$ in Lemma~\ref{L:ExpDnhalf} below.

\begin{lemm}\label{L:ExpDn}
For $\nn\ge 2$, every expression of~$\Dt{\nn}$ has the form $\ww_1 \gh\kk \ww_2$ with $\ww_1\ww_2 \eqp\nobreak \Dtt{\nn - 0.5}$ and $\kk = \nn+\lgg{\ww_1}-1$; in this case, $\ww_1 \gh\kk \eqp \gh{\nn-1}\ww_1$ holds.
\end{lemm}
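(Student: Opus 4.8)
The plan is to single out, inside any expression of~$\Dt\nn$, one distinguished letter --- the one realizing the ceiling --- and to track it as relations are applied. First I would record the numerology: from Definition~\ref{D:Delta} and~\eqref{E:DeltaInd} one gets $\lgg{\Dt\nn} = 2\nn - 3$ and $\lgg{\Dt{\nn - 0.5}} = 2\nn - 4$, while evaluating the ceiling on the increasing word~$\Dtt\nn$ gives $\plf{\Dt\nn} = 3\nn - 5$ (consistent with Lemma~\ref{L:PlafSimpl}). For a word $\ww = \gh{\ii_1} \pdots \gh{\ii_\ell}$ and a position~$p$ I set $v_p(\ww) := \ii_p + \ell - p$, so that $\plf\ww = \max_p v_p(\ww)$ and, by Lemma~\ref{L:Ceiling}, the maximal value $M := 3\nn - 5$ is the same for every expression of~$\Dt\nn$.

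The key step is a counting invariant: the number of positions~$p$ with $v_p(\ww) = M$ is the same for all expressions of~$\Dt\nn$, and equals~$1$. I would prove invariance by inspecting how the local values change under one relation of~$\PH$. For a length-$2$ relation $\gh\jj \gh\ii \leftrightarrow \gh\ii \gh{\jj + 1}$ (with $\jj \ge \ii + 2$) the larger of the two local values is preserved and merely shifts by one position, the smaller staying strictly below it; for a length-$3$ relation $\gh{\ii + 1} \gh\ii \gh{\ii + 1} \leftrightarrow \gh\ii \gh{\ii + 1} \gh{\ii + 3}$ the maximum of the three local values is preserved (moving from the first to the third slot), the other two staying strictly below. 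In each case at most one local maximum can equal~$M$, so the count of $M$-achievers is unchanged; as it equals~$1$ on~$\Dtt\nn$ (where the values are nondecreasing in~$p$ and strictly maximal at the last letter), it equals~$1$ throughout. Writing $\ww = \ww_1 \gh\kk \ww_2$ with $\gh\kk$ this unique $M$-achiever, at position $p = \lgg{\ww_1} + 1$, the equation $v_p = M$ reads $\kk = M - (\ell - p) = \nn + \lgg{\ww_1} - 1$, the announced value of~$\kk$.

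It then suffices to prove $\ww_1 \gh\kk \eqp \gh{\nn - 1} \ww_1$; the companion relation comes for free, since it yields $\ww \eqp \gh{\nn - 1} \ww_1 \ww_2$, and $\Dt\nn = \gh{\nn - 1} \Dt{\nn - 0.5}$ by~\eqref{E:Delta1}, so left cancellativity (Proposition~\ref{P:Canc}) gives $\ww_1 \ww_2 \eqp \Dtt{\nn - 0.5}$. I would establish $\ww_1 \gh\kk \eqp \gh{\nn - 1} \ww_1$ by induction on the number of relations joining~$\ww$ to the reference expression $\gh{\nn - 1} \Dtt{\nn - 0.5}$, for which $\ww_1 = \ew$ and the identity is trivial. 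When a relation avoids the distinguished letter, $\ww_1$ changes only up to~$\eqp$ and the identity survives at once. When it meets the distinguished letter, that letter must be the local maximum analysed above, so exactly four overlapping configurations occur (the two orientations of each relation type); in the two that move $\gh\kk$ to the right, $\ww_1$ gains the intervening letters and the new identity follows by combining the inductive one with the relation; in the two that move $\gh\kk$ to the left, $\ww_1$ loses letters and one concludes after right-cancelling the common trailing factor, again via Proposition~\ref{P:Canc}.

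The main obstacle is precisely this overlapping case analysis, especially the two length-$3$ configurations, where one applies a braid-type relation to~$\ww_1 \gh\kk$ and then cancels on the right; the two length-$2$ configurations are analogous but lighter. Everything else is bookkeeping once the uniqueness of the $M$-achiever is secured, which is what makes the decomposition $\ww = \ww_1 \gh\kk \ww_2$ canonical and lets the induction track a single well-defined letter.
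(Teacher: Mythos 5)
Your proof is correct, and while it shares the paper's basic mechanism --- an induction on the number of $\PH$-relations separating the given expression from a reference expression of~$\Dt\nn$, with the ceiling of Lemma~\ref{L:Ceiling} doing the real work --- it organizes the argument differently in a way worth noting. The paper defines the distinguished letter only implicitly, as the letter produced by the inductive construction, and then invokes the ceiling ad hoc to kill the subcases where that letter would sit on the wrong side of an overlapping relation (ten overlap subcases, several discharged by observing that a letter in that position would contribute more than $3\nn-5$ to~$\plf\ww$). You instead make the distinguished letter intrinsic from the outset: your local value $v_p(\ww) = \ii_p + \ell - p$ is exactly the contribution of position~$p$ to the ceiling, and your invariant --- that exactly one position attains the maximum $3\nn-5$, verified against both relation types --- pins down the occurrence of~$\gh\kk$ canonically and forces it to occupy the local-maximum slot of any relation that touches it. This collapses the overlap analysis to four configurations and, as a bonus, shows that the decomposition with $\kk = \nn + \lgg{\ww_1} - 1$ is in fact unique. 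You also reverse the order of deduction: you prove $\ww_1\gh\kk \eqp \gh{\nn-1}\ww_1$ by the induction and recover $\ww_1\ww_2 \eqp \Dtt{\nn - 0.5}$ by left-cancelling~$\gh{\nn-1}$ via~\eqref{E:Delta1} and Proposition~\ref{P:Canc}, where the paper establishes the decomposition first and obtains the commutation identity last by right cancellation; both directions are legitimate since cancellativity is available at this point. The price of your route is the extra invariance lemma; what it buys is a cleaner case analysis and an argument that would transfer almost verbatim to Lemma~\ref{L:ExpDnhalf}, whose (heavier) proof the paper omits.
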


\begin{proof}
The result is trivial for $\nn = 2$. We assume $\nn \ge 3$, and establish the existence of~$\ww_1$ and~$\ww_2$ for an expression~$\ww$ of~$\Dt\nn$ using induction on the combinatorial distance~$\dd$ between~$\Dtt\nn$ and~$\ww$, \ie, the minimal number of relations of~$\PH$ needed to transform~$\Dtt\nn$ to~$\ww$. For~$\dd = 0$, the result is trivial with $\ww_1:=\nobreak \Dtt{\nn - 0.5}$ and~$\ww_2:= \ew$. Assume now~$\dd \ge 1$, and let~$\ww'$ satisfying $\dist(\Dtt\nn, \ww') = \pp - 1$ and $\dist(\ww', \ww) = 1$. Write $\ww' = \uu_1 \vv' \uu_2$, $\ww = \uu_1 \vv \uu_2$ with $\vv' {=} \vv$ a relation of~$\PH$. By induction hypothesis, there exist~$\ww_1', \ww_2'$ satisfying $\ww' = \ww_1' \gh{\kk'} \ww_2'$ with $\ww_1'\ww_2' \eqp \Dtt{\nn - 0.5}$ \linebreak and $\kk':= \nn + \lgg{\ww'_1} - 1$. 
 
We consider the various possibilities for the position of~$\vv'$ inside~$\ww'$. If $\uu_1\vv'$ is a prefix of~$\ww_1'$, \ie, if we have $\ww_1' = \uu_1\vv'\uu_1''$ for some~$\uu''_1$, we find $\ww' = \uu_1\vv'\uu_1'' \gh{\kk'} \ww_2'$ and $\ww = \uu_1\vv \uu_1'' \gh{\kk'} \ww_2'$. Now we have $\uu_1\vv\uu_1''\ww_2' \eqp \uu_1\vv'\uu_1''\ww_2' = \ww_1'\ww_2' \eqp \Dtt{\nn - 0.5}$, whence the result with $\ww_1:= \uu_1\vv\uu_1''$, $\ww_2:= \ww_2'$, and $\kk:= \kk'$. The argument is similar if $\vv'\uu_2$ is a suffix of~$\ww_2'$. 

There remain the cases when $\gh{\kk'}$ occurs in~$\vv'$, \ie, $\gh{\kk'}$ is involved in going from~$\ww'$ to~$\ww$. Assume first that $\vv' {=}\vv$ is a length~$2$ relation. Then $\vv'$ is either $\gh\ii \gh\jj$ with $\jj \ge \ii + 3$, or $\gh\ii \gh\jj$ with $\jj \le \ii - 2$. As $\gh{\kk'}$ occurs in~$\vv'$ in position either~$1$ or~$2$, four cases are to be considered. 
 
\ITEM1 $\gh\ii$ is the last letter of~$\ww_1'$ and $\jj = \kk' \ge \ii + 3$ holds. Putting $\ww_1' :=\ww_1''\gh\ii$, we obtain $\ww' = \ww_1''\gh\ii \gh{\kk'} \ww_2'$ and $\ww = \ww_1'' \gh{\kk' - 1} \gh\ii\ww_2'= \ww_1'' \gh{\nn+\lgg{\ww_1''}-1} \gh\ii \ww_2'$. Moreover, $\ww_1''\gh\ii\ww_2' \eqp \Dtt{\nn - 0.5}$ holds, whence the result for $\ww_1:=\ww_1''$, $\ww_2:=\gh\ii\ww_2'$, and $\kk:= \kk' - 1$. 
 
\ITEM2 $\gh\ii$ is the last letter of~$\ww_1'$ and $\jj = \kk' \le \ii - 2$ holds. This is impossible, because a letter~$\gh\ii$ in this position would contribute $\ii + 1 +\lgg{\ww_2'}$, hence at least $\kk' + 3 + \lgg{\ww_2'}$, \ie, $3\nn - 2$, to the ceiling~$\plf{\ww'}$, which is that of~$\Dt\nn$, namely~$3\nn - 5$. 
 
\ITEM3 $\gh\ii$ is $\gh{\kk'}$ and $\gh\jj$ is the first letter of~$\ww_2'$, with $\jj \ge \ii + 3$. This is impossible for the same ceiling reason as in~\ITEM2.
 
\ITEM4 $\gh\ii$ is $\gh{\kk'}$ and $\gh\jj$ is the first letter of~$\ww_2'$, with $\jj \le \ii - 2$. This is similar to~\ITEM1.
 
\noindent We now similarly handle the case when $\vv' {=}\vv$ is a length~$3$ relation. Then $\vv'$ is either $\gh\ii \gh{\ii + 1} \gh{\ii + 3}$, or $\gh{\ii + 1} \gh\ii \gh{\ii + 1}$. This time, $\gh{\kk'}$ occurs in~$\vv'$ in position~$1$, $2$, or~Ê$3$, so six cases are a priori possible. 
 
\ITEM1 $\gh\ii \gh{\ii + 1}$ is the final factor of~$\ww_1'$, and $\ii + 3 = \kk'$ holds. Putting $\ww_1' :=\nobreak \ww_1''\gh{\kk' - 3}\gh{\kk' - 2}$, we obtain $\ww' = \ww_1'' \gh{\kk' - 3} \gh{\kk' - 2} \gh{\kk'} \ww_2'$ and $\ww = \ww_1''\gh{\kk' - 2} \gh{\kk' - 3} \gh{\kk' - 2} \ww_2'$. Moreover, we find $\ww_1''\gh{\kk' - 3} \gh{\kk' - 2} \ww_2' \eqp \Dtt{\nn - 0.5}$, whence the result for $\ww_1:=\ww_1''$, $\ww_2:=\gh{\kk - 3} \gh{\kk - 2} \ww_2'$, and $\kk:= \kk'$.
 
\ITEM2, \ITEM3 $\gh\ii$ is the last letter of~$\ww_1'$ with $\ii + 1 = \kk'$, and $\gh{\ii + 3}$ is the first letter in~$\ww_2'$, or we have $\ii = \kk'$ and $\gh{\ii + 1} \gh{\ii + 3}$ is a prefix of~$\ww'_2$. These cases are impossible because $\plf{\ww'} = 3\nn-5$ holds, whereas the letter~$\gh{\kk' + 2}$ would contribute $\nn + \lgg{\ww_1'} + 1 + \lgg{\ww_2'} - 1 = \nn +2\nn - 3 - 1 = 3\nn-4$ to the ceiling of~$\ww'$. 
 
\ITEM4, \ITEM5 $\gh{\ii + 1} \gh\ii$ is a suffix of~$\ww_1'$ and $\ii + 1 = \kk$ holds, or $\gh{\ii + 1}$ is the last letter of~$\ww_1'$ and $\ii = \kk$ holds and $\gh{\ii + 1}$ is the first letter of~$\ww_2'$. These cases are impossible because $\gh{\kk+2}$ would contribute $ 3\nn - 3$ to the ceiling of~$\ww'$. 
 
\ITEM6 $\gh{\ii}\gh{\ii + 1}$ is a prefix of~$\ww_2'$, with $\ii + 1 = \kk'$. Putting $\ww_2' = \gh{\kk' - 1} \gh{\kk'} \ww_2''$, we obtain $\ww' = \ww_1' \gh{\kk'} \gh{\kk' - 1} \gh{\kk'} \ww_2''$ and $\ww = \ww_1'\gh{\kk' - 1} \gh{\kk'} \gh{\kk' + 2} \ww_2''$. Moreover, we find $\ww_1' \gh{\kk' - 1} \gh{\kk'} \ww_2'' \eqp \Dtt{\nn - 0.5}$, whence the result for $\ww_1:=\ww_1'\gh{\kk' - 1} \gh{\kk'}$, $\ww_2:=\ww_2''$, and $\kk:= \kk' + 2$. 

This completes the induction. For the final equivalence, we find, using~\eqref{E:Delta1},
$$\ww_1 \gh\kk \ww_2 \eqp \Dtt\nn \eqp \gh{\nn-1}\Dtt{\nn - 0.5} \eqp \gh{\nn-1}\ww_1\ww_2.$$
By right cancelling~$\ww_2$, we deduce $\ww_1 \gh\kk \eqp \gh{\nn-1}\ww_1$.
\end{proof}

We now state a similar result for the expressions of~$\Dt{\nn - 0.5}$. The latter is equal to $\Dt{\nn- 1.5} \gh{3\nn-8} \gh{3\nn-7}$ and the two letters~$\gh{3\nn-8}$ and $\gh{3\nn-7}$ can be moved left. 

\begin{lemm}\label{L:ExpDnhalf}
For $\nn\ge 3$, every expression of~$\Dt{\nn - 0.5}$ has the form $\ww_1\gh\kk \ww_2 \gh\ell \ww_3$ with $\ww_1\ww_2\ww_3 \eqp \Dtt{\nn - 1.5}$, $\kk = \nn-2+\lgg{\ww_1}$, and $\ell = \nn-1+\lgg{\ww_1\ww_2}$; in this case, $\ww_1 \gh\kk \ww_2\ww_3$ represents~$\Dt{\nn-1}$.
\end{lemm}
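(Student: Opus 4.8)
The plan is to mirror the strategy of Lemma~\ref{L:ExpDn} but with two tracked letters instead of one. I would proceed by induction on the combinatorial distance~$\dd$ between~$\Dtt{\nn - 0.5}$ and the given expression~$\ww$, \ie\ the minimal number of $\PH$-relations transforming~$\Dtt{\nn - 0.5}$ into~$\ww$. The base case $\dd = 0$ is immediate: take $\ww_1 := \Dtt{\nn - 1.5}$, $\ww_2 := \ew$, $\ww_3 := \ew$, so that $\gh\kk = \gh{3\nn - 8}$ and $\gh\ell = \gh{3\nn - 7}$ are exactly the two final letters appended in Definition~\ref{D:Delta}, with $\kk = \nn - 2 + \lgg{\Dtt{\nn - 1.5}} = 3\nn - 8$ and $\ell = \nn - 1 + \lgg{\Dtt{\nn - 1.5}} = 3\nn - 7$, as required.

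For the inductive step, I would take~$\ww'$ at distance $\dd - 1$ with $\ww = \uu_1 \vv \uu_2$ obtained from $\ww' = \uu_1 \vv' \uu_2$ by one relation $\vv' \eqp \vv$, and apply the induction hypothesis to~$\ww'$ to get the decomposition $\ww' = \ww_1' \gh{\kk'} \ww_2' \gh{\ell'} \ww_3'$ with the stated index constraints. The case analysis then splits according to where the rewritten factor~$\vv'$ sits relative to the two distinguished letters~$\gh{\kk'}$ and~$\gh{\ell'}$. When $\vv'$ lies entirely inside one of the three blocks $\ww_1', \ww_2', \ww_3'$, the relabelling is routine and the index bookkeeping goes through exactly as in the ``prefix/suffix'' paragraph of Lemma~\ref{L:ExpDn}. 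When $\vv'$ overlaps~$\gh{\kk'}$ but not~$\gh{\ell'}$, I can reuse verbatim the six subcase analysis of Lemma~\ref{L:ExpDn} (cases~\ITEM1--\ITEM6 for both length~$2$ and length~$3$ relations), since that lemma already classifies exactly how a single tracked letter interacts with a relation; the only new feature is verifying that the ceiling obstruction arguments still rule out the forbidden subcases, which they do because $\plf{\Dt{\nn - 0.5}} = 3\nn - 4$ by~\eqref{E:DeltaInd} leaves the same slack. The symmetric treatment handles overlaps with~$\gh{\ell'}$.

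The genuinely new situation—and the step I expect to be the main obstacle—is when a single length~$3$ relation simultaneously involves \emph{both}~$\gh{\kk'}$ and~$\gh{\ell'}$. By the induction hypothesis the two indices satisfy $\ell' - \kk' = 1 + \lgg{\ww_2'}$, so they can be adjacent (when $\ww_2' = \ew$) or separated; the delicate point is that a braid-type relation $\gh\ii \gh{\ii+1} \gh{\ii+3} \to \gh{\ii+1} \gh\ii \gh{\ii+1}$ can act on the factor $\gh{\kk'} \gh{\ell'}$ together with one neighbouring letter, reshuffling which letter plays the role of the ``first moved'' and ``second moved'' generator. Here I would carefully track how $\kk$, $\ell$, and the block lengths~$\lgg{\ww_1}, \lgg{\ww_2}$ update, and check in each configuration that the invariants $\kk = \nn - 2 + \lgg{\ww_1}$ and $\ell = \nn - 1 + \lgg{\ww_1 \ww_2}$ are restored; the ceiling bound $3\nn - 4$ again eliminates the configurations that would push an index too high. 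Once the induction is complete, the final assertion follows exactly as in Lemma~\ref{L:ExpDn}: using $\Dtt{\nn - 0.5} \eqp \gh{\nn - 2} \Dtt{\nn - 1.5}$ together with $\Dt{\nn - 1} \eqp \gh{\nn - 2} \Dt{\nn - 1.5}$ from~\eqref{E:Delta1}, one identifies $\ww_1 \gh\kk \ww_2 \ww_3$ with a representative of~$\Dt{\nn - 1}$, completing the proof.
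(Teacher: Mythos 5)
Your plan follows exactly the route the paper takes: the paper in fact omits this proof entirely, stating only that it is ``entirely similar to that of Lemma~\ref{L:ExpDn}---but with more cases, as one has to take care of the positions of two letters'', which is precisely the induction-on-combinatorial-distance scheme you describe, including the extra case analysis for a relation touching the second tracked letter. One correction for the last paragraph: the identity $\Dtt{\nn - 0.5} \eqp \gh{\nn - 2} \Dtt{\nn - 1.5}$ is false (the two sides do not even have the same length); the correct relation is $\Dtt{\nn - 0.5} \eqp \gh{\nn-2}\gh{\nn-1}\Dtt{\nn - 1.5}$, cf.~\eqref{E:FNDthalf}, and the cleanest way to obtain the final assertion is to carry the claim $\ww_1 \gh\kk \ww_2\ww_3 \eqp \Dtt{\nn-1}$ through the induction as an additional invariant (it is preserved in each case, since deleting the tracked letter~$\gh{\ell'}$ commutes with the relation being applied) rather than to try to extract it at the end by cancellation.
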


We skip the proof, which is entirely similar to that of Lemma~\ref{L:ExpDn}---but with more cases, as one has to take care of the positions of two letters.

Applying Lemmas~\ref{L:ExpDn} and~\ref{L:ExpDnhalf}, we can now easily establish various properties of simple elements, paving the way for a partition of these elements in several families.

\begin{lemm}\label{L:Divnn2}
\ITEM1 For $\nn \ge 2$, every left divisor of~$\Dt{\nn}$ either left divides~$\Dt{\nn - 0.5}$, or has the form $\aa \gh\kk \bb$ with $\aa\bb\dive\Dt{\nn - 0.5}$, $\kk = \nn + \lgg\aa - 1$, and $\aa \gh\kk =\gh{\nn-1}\aa$.

\ITEM2 For $\nn\ge 2$ and $\aa\dive\Dt{\nn}$, the conditions $\aa\dive\Dt{\nn - 0.5}$ and $\gh{\nn-1} \not\dive\nobreak \aa$ are equivalent.

\ITEM3 For $\nn \ge 3$, every left divisor of~$\Dt{\nn - 0.5}$ either left divides~$\Dt{\nn-1}$, or has the form $\gh{\nn-2}\gh{\nn-1}\aa$ with $\aa\dive\Dtt{\nn - 1.5}$.
\end{lemm}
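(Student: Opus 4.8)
The plan is to deduce all three items from the structural descriptions of the expressions of~$\Dt\nn$ and~$\Dt{\nn - 0.5}$ provided by Lemmas~\ref{L:ExpDn} and~\ref{L:ExpDnhalf}, together with the two-sided cancellativity of~$\HHp$ (Proposition~\ref{P:Canc}). The uniform device throughout is the following: if $\cc$ left divides the element in question, say $\Dt\nn = \cc\dd$, I fix an expression $\uu$ of~$\cc$ and an expression $\vv$ of~$\dd$, so that the word $\uu\vv$ is an expression of~$\Dt\nn$; then I apply the relevant lemma \emph{to this very word} and track where the distinguished ``large'' letter(s) fall relative to the prefix~$\uu$.

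For~\ITEM1, Lemma~\ref{L:ExpDn} writes $\uu\vv = \ww_1\gh\kk\ww_2$ with $\ww_1\ww_2 \eqp \Dtt{\nn - 0.5}$, $\kk = \nn + \lgg{\ww_1} - 1$, and $\ww_1\gh\kk \eqp \gh{\nn - 1}\ww_1$; the letter $\gh\kk$ sits at position $\lgg{\ww_1} + 1$. If $\lgg\uu \le \lgg{\ww_1}$, then $\uu$ is a prefix of~$\ww_1$ and $\cc \dive \cl{\ww_1} \dive \Dt{\nn - 0.5}$. Otherwise $\gh\kk$ lies inside~$\uu$, so $\uu = \ww_1\gh\kk\uu_2$ with $\uu_2$ a prefix of~$\ww_2$, and setting $\aa := \cl{\ww_1}$, $\bb := \cl{\uu_2}$ yields $\cc = \aa\gh\kk\bb$ with $\aa\bb = \cl{\ww_1\uu_2} \dive \Dt{\nn - 0.5}$, $\kk = \nn + \lgg\aa - 1$, and $\aa\gh\kk = \gh{\nn - 1}\aa$, as wanted. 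Item~\ITEM2 is then formal: if $\aa \dive \Dt{\nn - 0.5}$ and $\gh{\nn - 1} \dive \aa$ both held, then $\gh{\nn - 1} \dive \Dt{(\nn - 1) + 0.5}$, contradicting Lemma~\ref{L:AtomDiv} applied with $\nn - 1$ in place of~$\nn$; conversely, if $\aa \dive \Dt\nn$ does not left divide~$\Dt{\nn - 0.5}$, the second case of~\ITEM1 must occur and gives $\aa = \gh{\nn - 1}\aa'\bb'$, whence $\gh{\nn - 1} \dive \aa$. (The value $\nn = 2$, where $\Dt{1.5} = \ew$, is immediate.)

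For~\ITEM3 the same scheme is run through Lemma~\ref{L:ExpDnhalf}, which yields $\uu\vv = \ww_1\gh\kk\ww_2\gh\ell\ww_3$ with $\ww_1\ww_2\ww_3 \eqp \Dtt{\nn - 1.5}$ and with $\ww_1\gh\kk\ww_2\ww_3$ representing~$\Dt{\nn - 1}$. There are now two large letters, and the relevant dichotomy is whether or not $\gh\ell$ lies in the prefix~$\uu$. If it does not, then $\uu$ is a prefix of~$\ww_1\gh\kk\ww_2$, whence $\cc \dive \cl{\ww_1\gh\kk\ww_2} \dive \cl{\ww_1\gh\kk\ww_2\ww_3} = \Dt{\nn - 1}$. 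If it does, then, since $\gh\kk$ precedes $\gh\ell$, both large letters lie in~$\uu$, so $\uu = \ww_1\gh\kk\ww_2\gh\ell\ww_3'$ with $\ww_3'$ a prefix of~$\ww_3$. The extra ingredient here is the factorization $\Dt{\nn - 0.5} = \gh{\nn - 2}\gh{\nn - 1}\Dt{\nn - 1.5}$, read off from the normal forms~\eqref{E:FNDthalf}; combined with $\ww_1\ww_2\ww_3 \eqp \Dtt{\nn - 1.5}$ it gives $\ww_1\gh\kk\ww_2\gh\ell\ww_3 \eqp \gh{\nn - 2}\gh{\nn - 1}\ww_1\ww_2\ww_3$, and right-cancelling~$\ww_3$ (Proposition~\ref{P:Canc}) leaves $\ww_1\gh\kk\ww_2\gh\ell \eqp \gh{\nn - 2}\gh{\nn - 1}\ww_1\ww_2$. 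Setting $\aa := \cl{\ww_1\ww_2\ww_3'}$ then gives $\cc = \gh{\nn - 2}\gh{\nn - 1}\aa$ with $\aa \dive \cl{\ww_1\ww_2\ww_3} = \Dt{\nn - 1.5}$, as required. I expect the only genuine obstacle to be this last case of~\ITEM3: one must simultaneously pin down the prefix and manufacture the leading factor~$\gh{\nn - 2}\gh{\nn - 1}$, which is exactly what the right-cancellation of~$\ww_3$ against the factorization of~$\Dt{\nn - 0.5}$ accomplishes. Everything else is prefix bookkeeping, valid for $\nn \ge 3$ (\resp $\nn \ge 2$), with the initial value $\nn = 3$, where $\Dt{1.5} = \ew$, checked directly.
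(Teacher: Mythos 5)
Your proposal is correct and follows essentially the same route as the paper: apply Lemma~\ref{L:ExpDn} (\resp~Lemma~\ref{L:ExpDnhalf}) to an expression of~$\Dt\nn$ (\resp~$\Dt{\nn-0.5}$) beginning with an expression of the given divisor, case-split on where the distinguished letter(s) fall relative to that prefix, and in the last case of~\ITEM3 use the factorization $\Dt{\nn-0.5} = \gh{\nn-2}\gh{\nn-1}\Dt{\nn-1.5}$ together with right cancellation. The only differences are cosmetic (you merge the paper's first two subcases of~\ITEM3 into one, and cancel~$\ww_3$ rather than a suffix~$\ww_3''$ of it).
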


\begin{proof}
\ITEM1 If $\cl{\ww} \dive \Dt\nn$ holds, then $\Dt\nn$ has an expression of form~$\ww\ww'$ for some~$\ww'$. By Lemma~\ref{L:ExpDn}, we can write $\ww\ww' = \ww_1 \gh\kk \ww_2$ with $\ww_1\ww_2 \eqp \Dtt{\nn - 0.5}$. Then either $\ww$ is a prefix of~$\ww_1$, and then we have $\cl{\ww} \dive \Dt{\nn - 0.5}$, or $\ww$ has the form $\ww_1 \gh\kk \ww_2'$ with $\ww_2'$ a prefix of~$\ww_2$, and then we have $\cl{\ww} = \cl{\ww_1} \gh\kk \cl{\ww_2'}$ with $\cl{\ww_1}\cl{\ww_2'} = \cl{\ww_1\ww_2'} \dive \Dt{\nn - 0.5}$. Moreover, $\ww_1 \gh\kk \eqp \gh{\nn-1}\ww_1$ implies $\cl{\ww_1} \gh\kk = \gh{\nn-1}\cl{\ww_1}$.

\ITEM2 Assume $\aa \dive \Dt{\nn - 0.5}$. By Lemma~\ref{L:AtomDiv}, $\gh{\nn-1}$ does not left divide~$\Dt{\nn - 0.5}$, so, a fortiori, $\gh{\nn-1}$ does not left divide~$\aa$. 

Conversely, assume $\aa \dive \Dt\nn$ and $\gh{\nn-1} \not\dive \aa$. By~\ITEM1, there are two possibilities: either we have $\aa \dive \Dt{\nn - 0.5}$, as expected, or $\aa$ can be decomposed as $\bb \gh\kk \cc$ with $\bb\cc \dive\Dt{\nn - 0.5}$ and $\bb \gh\kk = \gh{\nn-1}\bb$, implying $\gh{\nn-1} \dive \aa$ and contradicting the assumption. So $\aa\dive\Dt{\nn - 0.5}$ is the only possibility. 

\ITEM3 If $\cl\ww \dive \Dt{\nn - 0.5}$ holds, then $\Dt{\nn - 0.5}$ has an expression of form~$\ww\ww'$ for some~$\ww'$. By Lemma~\ref{L:ExpDnhalf}, we can write $\ww \ww' = \ww_1 \gh\kk \ww_2 \gh\ell \ww_3$ with $\ww_1\ww_2\ww_3\equiv \Dtt{\nn - 1.5}$, \linebreak $\kk =\nobreak \nn -\nobreak 2 +\nobreak \lgg{\ww_1}$, and $\ell = \nn -1 +\lgg{\ww_1\ww_2}$. Then three cases may arise. Either $\ww$ is a prefix of~$\ww_1$, and then we have $\cl{\ww} \dive \Dt{\nn - 1.5}$, whence a fortiori $\cl{\ww} \dive \Dt{\nn-1}$. Or $\ww$ is $\ww_1 \gh\kk \ww_2'$ for some prefix~$\ww_2'$ of~$\ww_2$. By Lemma~\ref{L:ExpDnhalf}, we have $\cl\ww \dive\Dt{\nn-1}$ again. Or $\ww$ is $\ww_1 \gh\kk \ww_2 \gh\ell \ww_3'$ for some prefix~$\ww_3'$ of~$\ww_3$, say $\ww_3=\ww_3'\ww_3''$. Applying~\eqref{E:FNDthalf}, we find $\ww \ww_3'' \eqp\Dtt{\nn - 0.5} \eqp \gh{\nn-2}\gh{\nn-1}\Dtt{\nn- 1.5} \eqp \gh{\nn-2}\gh{\nn-1} \ww_1\ww_2\ww_3'\ww_3''.$
Right cancelling~$\ww''_3$, we deduce $\ww \eqp\gh{\nn-2}\gh{\nn-1} \ww_1\ww_2\ww_3'$, with~$\ww_1, \ww_2, \ww'_3$ satisfying~$\cl{\ww_1\ww_2\ww_3'}\dive\Dt{\nn - 1.5}$.
\end{proof}

\subsection{Partitioning the sets~$\SPP\nn\ell$}\label{SS:Part}

With the preparatory results of Section~\ref{SS:Prepa}, it is now easy to describe the simple elements of the monoid~$\HHp$ more precisely. To this end, we introduce subfamilies of~$\HHp$. We shall eventually see that these subfamilies form a partition of the set~$\SPP\nn\ell$ of all length~$\ell$ left divisors of~$\Dt\nn$.

\begin{defi}
For $\nn\ge 2$ and $0 \le \ell \le 2\nn - 3$, we put
\begin{align*}
&(\tI)\quad \AAAA\nn\ell := \{\aa \mid \aa \dive \Dt{\nn - 1} \ \text{and}\ \lgg\aa = \ell\} \text{\ for $\nn \ge 2, \ell \ge 0$},\\
&(\tII)\quad \BBBB\nn\ell := \{ \gh{\nn-1}\aa \mid \aa \dive \Dt{\nn - 1}\ \text{and}\ \lgg\aa = \ell - 1 \} \text{\ for $\nn \ge 2, \ell \ge 1$},\\
&(\tIII)\quad \CCCC\nn\ell := \{\gh{\nn-2}\gh{\nn-1}\aa \mid \aa \dive \Dt{\nn - 1.5}\ \text{and}\ \lgg\aa = \ell - 2\} \text{\ for $\nn \ge 3, \ell \ge 2$},\\
&(\tIV)\quad \DDDD\nn\ell := \{\gh{\nn-1} \gh{\nn-2} \gh{\nn-1} \aa \mid \gh{\nn-2} \aa \dive \Dt{\nn - 1}\ \text{and}\ \lgg\aa = \ell - 3\} \text{\ for $\nn, \ell \ge 3$},
\end{align*}
completed with $\AAAA\nn\ell = \BBBB\nn\ell = \CCCC\nn\ell = \DDDD\nn\ell := \emptyset$ for other values of~$\nn$ and~$\ell$.
\end{defi}

The first step is to check that the above sets consist of left divisors of~$\Dt\nn$.

\begin{lemm}\label{L:InclusionDelta\nn}
For all $\nn, \ell$, the sets $\AAAA\nn\ell,\BBBB\nn\ell,\CCCC\nn\ell$ et~$\DDDD\nn\ell$ are included in~$\SPP\nn\ell$.
\end{lemm}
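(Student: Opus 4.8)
The plan is to verify the four inclusions separately, in each case exhibiting that a typical element of the family left divides $\Dt\nn$ and has the prescribed length. The length count is immediate in every case from the definitions, so the real content is the divisibility $\aa \dive \Dt\nn$. I would systematically reduce each claim to the relations \eqref{E:Demi2} together with the structural descriptions of simple elements obtained in Lemma~\ref{L:Divnn2}, reading those results in the ``constructive'' direction: where Lemma~\ref{L:Divnn2} says every left divisor of $\Dt\nn$ has a certain shape, here I need the converse, that elements of each prescribed shape are indeed left divisors.

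For type~$\mathrm0$, an element of $\AAAA\nn\ell$ left divides $\Dt{\nn-1}$, hence left divides $\Dt\nn$ by the chain $\Dt{\nn-1} \dive \Dt{\nn-0.5} \dive \Dt\nn$ from~\eqref{E:Demi2}; this case is essentially free. For type~$\mathrm{II}_1$, I take $\aa \dive \Dt{\nn-1.5}$ and must show $\gh{\nn-2}\gh{\nn-1}\aa \dive \Dt{\nn-0.5}$, which then gives $\dive \Dt\nn$ by~\eqref{E:Demi2}; this is exactly the content of the ``except'' branch of Lemma~\ref{L:Divnn2}\ITEM3, so I would invoke that lemma directly. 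Type~$\mathrm I$ requires showing $\gh{\nn-1}\aa \dive \Dt\nn$ whenever $\aa \dive \Dt{\nn-1}$. The clean way is to use~\eqref{E:Delta1}, namely $\Dt\nn = \gh{\nn-1}\Dt{\nn-0.5}$: since $\aa \dive \Dt{\nn-1} \dive \Dt{\nn-0.5}$, left multiplying by $\gh{\nn-1}$ gives $\gh{\nn-1}\aa \dive \gh{\nn-1}\Dt{\nn-0.5} = \Dt\nn$, using that left multiplication preserves left divisibility.

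The main obstacle is type~$\mathrm{II}_2$, where the defining condition is the indirect $\gh{\nn-2}\aa \dive \Dt{\nn-1}$ rather than a condition on $\aa$ alone, and the asserted divisor is $\gh{\nn-1}\gh{\nn-2}\gh{\nn-1}\aa$. Here I would exploit the length~$3$ relation $\gh{\nn-1}\gh{\nn-2}\gh{\nn-1} = \gh{\nn-2}\gh{\nn-1}\gh{\nn+1}$ of~$\PH$ (the case $\jj=\ii+1$, $\ii=\nn-2$), so that $\gh{\nn-1}\gh{\nn-2}\gh{\nn-1}\aa \eqp \gh{\nn-2}\gh{\nn-1}\gh{\nn+1}\aa$. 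The plan is then to reduce to type~$\mathrm{II}_1$: the hypothesis $\gh{\nn-2}\aa \dive \Dt{\nn-1}$ forces $\aa$ to be small enough that $\gh{\nn-1}\gh{\nn+1}\aa$, equivalently $\gh{\nn-2}\gh{\nn-1}(\gh{\nn+1}\aa)$, sits inside $\Dt{\nn-0.5}$; more directly, I expect it is cleanest to argue via~\eqref{E:Delta1} twice, peeling off $\gh{\nn-1}$ to land in $\Dt{\nn-0.5}$ and then recognizing $\gh{\nn-2}\gh{\nn-1}\aa'$ as a type~$\mathrm{II}_1$ divisor there. I would carry out this case last and with care, since matching the shifted index $\nn+1$ against the ceiling bound of Lemma~\ref{L:PlafSimpl} is where a routine-looking computation could hide a genuine constraint; I expect the hypothesis $\gh{\nn-2}\aa \dive \Dt{\nn-1}$ to be exactly what is needed to make the reduction go through.
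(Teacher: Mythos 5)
Your treatment of types $\mathrm0$ and $\mathrm I$ is correct and is exactly the paper's argument (the paper writes the type~$\mathrm I$ step as $\gh{\nn-1}\aa \dive \gh{\nn-1}\Dt{\nn-1}\gh{3\nn-7} = \Dt\nn$, which is your use of~\eqref{E:Delta1}). The genuine problem is type~$\mathrm{II}_1$: you propose to ``invoke Lemma~\ref{L:Divnn2}\ITEM3 directly'', but that lemma runs in the opposite direction to what you need. It asserts that every left divisor of~$\Dt{\nn-0.5}$ either divides~$\Dt{\nn-1}$ or has the form $\gh{\nn-2}\gh{\nn-1}\aa$; it does \emph{not} assert that every element of that form is a left divisor. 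You flagged this converse issue yourself in your opening paragraph and then cited the forward direction anyway, so the key divisibility $\gh{\nn-2}\gh{\nn-1}\aa \dive \Dt\nn$ is left unproved. The missing (one-line) ingredient is the explicit factorization coming from~\eqref{E:FNDthalf}, namely $\Dt\nn = \gh{\nn-2}\gh{\nn-1}\Dt{\nn-1.5}\gh{3\nn-5}$ (equivalently $\Dt{\nn-0.5} = \gh{\nn-2}\gh{\nn-1}\Dt{\nn-1.5}$), after which left multiplication of $\aa \dive \Dt{\nn-1.5}$ by $\gh{\nn-2}\gh{\nn-1}$ finishes the case; this is what the paper does.

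For type~$\mathrm{II}_2$, your first idea is a dead end: rewriting $\gh{\nn-1}\gh{\nn-2}\gh{\nn-1}\aa$ as $\gh{\nn-2}\gh{\nn-1}\gh{\nn+1}\aa$ and trying to view $\gh{\nn+1}\aa$ as a divisor of~$\Dt{\nn-1.5}$ cannot work, since by Lemma~\ref{L:AtomDiv} the atom~$\gh{\nn+1}$ divides no~$\Dt{\nn-1.5}$; likewise $\gh{\nn-1}\gh{\nn-2}\gh{\nn-1}\aa$ cannot ``sit inside $\Dt{\nn-0.5}$'' because $\gh{\nn-1}$ does not left divide~$\Dt{\nn-0.5}$. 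Your fallback route is the right one and is the paper's: from $\gh{\nn-2}\aa \dive \Dt{\nn-1} = \gh{\nn-2}\Dt{\nn-1.5}$ deduce $\aa \dive \Dt{\nn-1.5}$ by left cancellation, then use $\Dt\nn = \gh{\nn-1}\gh{\nn-2}\gh{\nn-1}\Dt{\nn-1.5}$ (two applications of~\eqref{E:Delta1} together with~\eqref{E:FNDthalf}). No appeal to the ceiling bound of Lemma~\ref{L:PlafSimpl} is needed anywhere. In short: same strategy as the paper, but you must replace the wrong-direction citation of Lemma~\ref{L:Divnn2}\ITEM3 by the explicit factorizations of $\Dt\nn$ and $\Dt{\nn-0.5}$, and drop the $\gh{\nn+1}$ detour.
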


\begin{proof}
By definition, all elements of $\AAAA\nn\ell,\BBBB\nn\ell,\CCCC\nn\ell$, and~$\DDDD\nn\ell$ have length~$\ell$, so the point is to check that they left divide~$\Dt\nn$. As $\Dt{\nn - 1}$ left divides~$\Dt\nn$, the result is obvious for~$\AAAA\nn\ell$. Next, $\aa \dive \Dt{\nn - 1}$ implies $\gh{\nn - 1} \aa \dive \gh{\nn - 1} \Dt{\nn - 1}$, whence $\gh{\nn - 1} \aa \dive \gh{\nn - 1} \Dt{\nn - 1} \gh{3\nn - 7} = \Dt\nn$. So $\BBBB\nn\ell$ is included in~$\SP\nn$. Then, by~\eqref{E:FNDthalf}, we have $\Dt\nn = \gh{\nn-2}\gh{\nn-1}\Dt{\nn - 1.5}\gh{3\nn-5}$, so $\aa \dive \Dt{\nn - 1.5}$ implies $\gh{\nn - 2}\gh{\nn - 1}\aa \dive \gh{\nn-2}\gh{\nn-1}\Dt{\nn - 1.5}$, whence $\gh{\nn - 2}\gh{\nn - 1}\aa \dive \Dt\nn$. So $\CCCC\nn\ell$ is included in~$\SP\nn$. Finally, for $\bb = \gh{\nn-1} \gh{\nn-2} \gh{\nn-1} \aa$ with $\gh{\nn - 2}\aa \dive\nobreak \Dt{\nn - 1}$, we have $\Dt{\nn-1} =\gh{\nn-2} \Dt{\nn-1.5}$ by~\eqref{E:Delta1}, whence $\aa \dive\nobreak\Dt{\nn-1.5}$ by left cancelling~$\gh{\nn - 2}$. A direct computation gives $\Dt\nn = \gh{\nn-1} \gh{\nn-2} \gh{\nn-1} \Dt{\nn - 1.5}$, so $\aa \dive \Dt{\nn - 1.5}$ implies $\bb \dive \gh{\nn-1} \gh{\nn-2} \gh{\nn-1} \Dt{\nn - 1.5} = \Dt\nn$. So $\DDDD\nn\ell$ is included in~$\SP\nn$. 
\end{proof}

The second step consists in showing that the various sets $\AAAA\nn\ell \wdots \DDDD\nn\ell$ are pairwise disjoint. This is more delicate, in that it involves proving that certain words are not equivalent. According to Lemma~\ref{L:Rev}\ITEM1, this can be seen using $\PH$-grids.

\begin{lemm}\label{L:EnsDisjoints2à2}
For all $\nn, \ell$, the sets $\AAAA\nn\ell, \BBBB\nn\ell, \CCCC\nn\ell$, and~$ \DDDD\nn\ell$ are pairwise disjoint.
\end{lemm}

\begin{proof}
To prove that $\AAAA\nn\ell$ is disjoint from $\BBBB\nn\ell,\CCCC\nn\ell$, and~$\DDDD\nn\ell$, it suffices to prove that no element of the latter three sets left divides~$\Dt{\nn - 1}$. Now, by definition, $\gh{\nn - 1}$ left divides every element of~$\BBBB\nn\ell$ and~$\DDDD\nn\ell$, whereas, by Lemma~\ref{L:AtomDiv}, $\gh{\nn - 1}$ does not left divide~$\Dt{\nn - 1}$. So $\BBBB\nn\ell$ and~$\DDDD\nn\ell$ are disjoint from~$\AAAA\nn\ell$.

Next, a direct computation gives $(\gh{\nn - 2} \gh{\nn - 1}, \Dtt{\nn - 1}) \rev (\gh{3\nn - 7}, \Dtt{\nn - 1.5})$, which, by Lemma~\ref{L:Rev}\ITEM1, proves $\gh{\nn - 2} \gh{\nn - 1} \not\dive \Dt{\nn - 1}$. As $\gh{\nn - 2} \gh{\nn - 1}$ left divides every element of~$\CCCC\nn\ell$, it follows that $\CCCC\nn\ell$ is disjoint from~$\AAAA\nn\ell$.

Assume now $\aa \in \BBBB\nn\ell\cap\CCCC\nn\ell$. Then, by definition, we have both $\gh{\nn - 1} \dive \aa$ and $\aa \dive \gh{\nn-2} \gh{\nn-1} \Dt{\nn - 1.5}$, whence $\gh{\nn-1} \dive \gh{\nn-2} \gh{\nn-1} \Dt{\nn - 1.5}$. This is impossible: a direct computation gives $(\gh{\nn - 1}, \gh{\nn - 2} \gh{\nn - 1} \Dtt{\nn - 1.5}) \rev (\gh{3\nn - 10}, \gh{\nn - 1} \gh{\nn - 2} \Dtt{\nn - 1.5})$, which, by Lemma~\ref{L:Rev}\ITEM2, proves $\gh{\nn-1} \not\dive \gh{\nn-2} \gh{\nn-1} \Dt{\nn - 1.5}$. Hence $\BBBB\nn\ell$ and $\CCCC\nn\ell$ are disjoint.

Assume next $\aa \in \BBBB\nn\ell\cap\DDDD\nn\ell$. We have both $\aa = \gh{\nn-1}\bb$ with~$\bb \in \SPP{\nn - 1}{\ell - 1}$, and $\aa =\gh{\nn-1} \gh{\nn-2} \gh{\nn-1} \cc$ with $\gh{\nn-2}\cc \dive \Dt{\nn - 1}$. By left cancelling~$\gh{\nn - 1}$, we deduce $\bb = \gh{\nn-2} \gh{\nn-1} \cc$, whence $\gh{\nn-2} \gh{\nn-1} \cc \dive \Dt{\nn - 1}$ and, a fortiori, $\gh{\nn-2} \gh{\nn-1} \dive \Dt{\nn - 1}$, what we saw above is false. Hence $\BBBB\nn\ell$ and $\DDDD\nn\ell$ are disjoint.

Finally, assume $\aa \in \CCCC\nn\ell\cap\DDDD\nn\ell$. By definition, we have $\aa = \gh{\nn-2}\gh{\nn-1} \bb =\gh{\nn-1} \gh{\nn-2} \gh{\nn-1} \cc$ with~$\bb \dive \Dt{\nn - 1.5}$ and $\gh{\nn-2}\cc \dive \Dt{\nn - 1}$. As $\gh{\nn-1} \gh{\nn-2} \gh{\nn-1}$ is also $\gh{\nn-2} \gh{\nn-1} \gh{\nn+1}$, we deduce $\gh{\nn-2} \gh{\nn-1} \bb = \gh{\nn-2} \gh{\nn-1} \gh{\nn+1} \cc$, whence $\bb = \gh{\nn+1} \cc$ by left cancelling $\gh{\nn-2} \gh{\nn-1}$, and, from there, $\gh{\nn+1} \dive \bb \dive \Dt{\nn - 1.5}$. Now, by Lemma~\ref{L:AtomDiv}, $\gh{\nn+1}$ does not left divide~$\Dt{\nn - 1.5}$. Hence $\CCCC\nn\ell$ and $\DDDD\nn\ell$ are disjoint.
\end{proof}

We are now ready to establish the expected partition result:

\begin{prop}\label{P:Part}
For all $\nn, \ell$, the sets~$\AAAA\nn\ell$, $\BBBB\nn\ell$, $\CCCC\nn\ell$, and~$\DDDD\nn\ell$ form a partition of~$\SPP\nn\ell$.
\end{prop}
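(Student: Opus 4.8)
The plan is as follows. The two preceding lemmas have already established that each of~$\AAAA\nn\ell$, $\BBBB\nn\ell$, $\CCCC\nn\ell$, $\DDDD\nn\ell$ is contained in~$\SPP\nn\ell$ and that the four families are pairwise disjoint, so the only point left is to show that they \emph{cover}~$\SPP\nn\ell$: every length~$\ell$ left divisor of~$\Dt\nn$ must belong to one of the four families. First I would dispose of the base case $\nn = 2$, where $\Dt2 = \gh1$, the only left divisors are~$1$ and~$\gh1$, lying in~$\AAAA2{0}$ and~$\BBBB2{1}$ respectively, while $\CCCC\nn\ell$ and~$\DDDD\nn\ell$ are empty for $\nn = 2$; so the covering holds trivially there.

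For $\nn \ge 3$, I would fix~$\aa$ with $\aa \dive \Dt\nn$ and $\lgg\aa = \ell$, and run a two-stage case split driven entirely by Lemma~\ref{L:Divnn2}. Stage one applies Lemma~\ref{L:Divnn2}\ITEM1: either $\aa \dive \Dt{\nn - 0.5}$, or $\aa = \gh{\nn-1}\dd$ with $\dd \dive \Dt{\nn - 0.5}$ and $\lgg\dd = \ell - 1$ (this is the rephrasing of the decomposition $\aa\gh\kk\bb$ through the identity $\aa\gh\kk = \gh{\nn-1}\aa$). In both situations we now hold an element --- namely $\aa$ itself, or $\dd$ --- that left divides~$\Dt{\nn - 0.5}$, so stage two applies Lemma~\ref{L:Divnn2}\ITEM3 to it: that element either left divides~$\Dt{\nn - 1}$, or factors as $\gh{\nn-2}\gh{\nn-1}\ee$ with $\ee \dive \Dt{\nn - 1.5}$. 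Crossing the two dichotomies produces exactly four branches, which I expect to match the four families one-to-one.

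It then remains to verify the matching together with the length bookkeeping. If $\aa \dive \Dt{\nn-1}$, then $\aa \in \AAAA\nn\ell$. If $\aa = \gh{\nn-2}\gh{\nn-1}\ee$ with $\ee \dive \Dt{\nn-1.5}$ and $\lgg\ee = \ell - 2$, then $\aa \in \CCCC\nn\ell$. If $\aa = \gh{\nn-1}\dd$ with $\dd \dive \Dt{\nn-1}$ and $\lgg\dd = \ell - 1$, then $\aa \in \BBBB\nn\ell$. Finally, if $\aa = \gh{\nn-1}\dd$ with $\dd = \gh{\nn-2}\gh{\nn-1}\ee$, $\ee \dive \Dt{\nn-1.5}$, and $\lgg\ee = \ell - 3$, then $\aa = \gh{\nn-1}\gh{\nn-2}\gh{\nn-1}\ee$, and the one genuine verification needed to place~$\aa$ in~$\DDDD\nn\ell$ is its defining condition $\gh{\nn-2}\ee \dive \Dt{\nn-1}$; this I would obtain from $\ee \dive \Dt{\nn-1.5}$ together with the identity $\Dt{\nn-1} = \gh{\nn-2}\Dt{\nn-1.5}$ furnished by~\eqref{E:Delta1}.

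The main obstacle is not any single computation but keeping the two-stage split genuinely exhaustive while never confusing the three reference elements $\Dt{\nn-1}$, $\Dt{\nn - 0.5}$, and~$\Dt{\nn - 1.5}$: the heavy structural work has already been absorbed into Lemma~\ref{L:Divnn2}, so once each branch is correctly aligned with the corresponding family definition the remaining length accounting is routine. Combined with the inclusion and the pairwise disjointness from the two preceding lemmas, the exhaustiveness of the four branches yields the announced partition of~$\SPP\nn\ell$.
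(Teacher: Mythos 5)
Your proposal is correct and follows essentially the same route as the paper: a two-stage case split via Lemma~\ref{L:Divnn2}\ITEM1 and then \ITEM3, with the four resulting branches matched to the four families and the type~$\mathrm{II}_2$ membership secured by $\Dt{\nn-1} = \gh{\nn-2}\Dt{\nn-1.5}$. Your explicit treatment of the base case $\nn = 2$ is a harmless (and slightly more careful) addition.
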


\begin{proof}
Owing to Lemmas~\ref{L:InclusionDelta\nn} and~\ref{L:EnsDisjoints2à2}, the only point remaining to be proved is that every element of~$\SPP\nn\ell$ belongs to one of the sets~$\AAAA\nn\ell$, $\BBBB\nn\ell$, $\CCCC\nn\ell$, $\DDDD\nn\ell$. So let~$\aa$ belong to~$\SPP\nn\ell$. By Lemma~\ref{L:Divnn2}\ITEM1, we have either $\aa \dive \Dt{\nn - 0.5}$, or $\aa = \bb \gh\kk \cc$ with $\bb\cc\dive\Dt{\nn - 0.5}$ and $\bb \gh\kk = \gh{\nn - 1}\bb$. Assume first $\aa \dive \Dt{\nn - 0.5}$. By Lemma~\ref{L:Divnn2}\ITEM3, we have either $\aa \dive \Dt{\nn-1}$, whence $\aa\in\AAAA\nn\ell$, or $\aa = \gh{\nn-2} \gh{\nn-1} \dd$ with $\dd \dive \Dt{\nn - 1.5}$, whence~$\aa\in\CCCC\nn\ell$.

Assume now $\aa = \bb \gh\kk \cc$ with $\bb\cc \dive\Dt{\nn - 0.5}$ and $\bb \gh\kk = \gh{\nn - 1}\bb$, whence $\aa = \gh{\nn-1} \bb\cc$. By Lemma~\ref{L:Divnn2}\ITEM3, we have either $\bb\cc \dive \Dt{\nn-1}$, whence $\aa\in\BBBB\nn\ell$, or $\bb\cc = \gh{\nn-2}\gh{\nn-1}\dd$ with~$\dd \dive \Dt{\nn - 1.5}$. In the latter case, we find $\aa = \gh{\nn-1} \gh{\nn-2} \gh{\nn-1} \dd$. Moreover, $\dd\dive\nobreak\Dt{\nn - 1.5}$ implies $\gh{\nn-2}\dd \dive \gh{\nn-2} \Dt{\nn - 1.5} = \Dt{\nn-1}$, whence $\aa\in\DDDD\nn\ell$. \end{proof}

With the partition of Proposition~\ref{P:Part}, we can now count the left divisors of~$\Dt\nn$. 

\begin{lemm}\label{L:Bij}
For $\nn \ge 3$, let $\BijA\nn\ell$ be the identity map on~$\SPP{\nn - 1}{\ell}$, let $\BijB\nn\ell$ be the map $\aa \mapsto \gh{\nn - 1} \aa$ on~$\SPP{\nn - 1}{\ell - 1}$, and let $\BijC\nn\ell$ be the map on~$\SPP{\nn - 1}{\ell - 2}$ defined by $\FF(\aa):= \gh{\nn - 2} \gh{\nn - 1} \aa$ if $\aa \dive \Dt{\nn - 1.5}$ holds, and $\FF(\aa):= \gh{\nn - 1} \gh{\nn - 2} \gh{\nn - 1} \bb$ with $\aa = \gh{\nn - 2}\bb$ otherwise. Then $\BijA\nn\ell$, $\BijB\nn\ell$, and~$\BijC\nn\ell$ respectively establish bijections
$$\SPP{\nn - 1}{\ell} \leftrightarrow \AAAA\nn\ell, \quad\SPP{\nn - 1}{\ell - 1} \leftrightarrow \BBBB\nn\ell, \quad\text{and}\quad\SPP{\nn - 1}{\ell - 2} \leftrightarrow \CCCC\nn\ell \cup \DDDD\nn\ell.$$
\end{lemm}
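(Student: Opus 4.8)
The plan is to prove each of the three bijections separately, relying on the structural descriptions already established. For $\BijA\nn\ell$, the map is the identity, and by definition $\AAAA\nn\ell = \{\aa \mid \aa \dive \Dt{\nn - 1}\ \text{and}\ \lgg\aa = \ell\}$, which is exactly $\SPP{\nn - 1}\ell$; so this bijection is immediate from the definitions and requires only noting that the two sets coincide. For $\BijB\nn\ell$, the map $\aa \mapsto \gh{\nn - 1}\aa$ sends $\SPP{\nn - 1}{\ell - 1}$ onto $\BBBB\nn\ell$ by the very definition of the latter set; surjectivity is built into the definition, and injectivity follows at once from left cancellativity of~$\HHp$ (Proposition~\ref{P:Canc}). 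So the first two cases are essentially bookkeeping.

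The substantive case is $\BijC\nn\ell$, which must biject $\SPP{\nn - 1}{\ell - 2}$ with the \emph{union} $\CCCC\nn\ell \cup \DDDD\nn\ell$; here the map is defined by cases according to whether $\aa \dive \Dt{\nn - 1.5}$ holds. First I would check that $\BijC\nn\ell$ is well defined: when $\aa \dive \Dt{\nn - 1.5}$, the image $\gh{\nn - 2}\gh{\nn - 1}\aa$ lands in $\CCCC\nn\ell$ by definition; when $\aa \not\dive \Dt{\nn - 1.5}$, I must verify that $\aa$ nonetheless admits a factorization $\aa = \gh{\nn - 2}\bb$, so that the second branch makes sense, and that $\gh{\nn - 2}\gh{\nn - 1}\bb$ satisfies the defining condition $\gh{\nn-2}\bb = \aa \dive \Dt{\nn-1}$ of~$\DDDD\nn\ell$. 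The existence of the factor~$\gh{\nn - 2}$ is exactly the content of Lemma~\ref{L:Divnn2}\ITEM3 applied to $\aa \dive \Dt{\nn - 1}$: a left divisor of $\Dt{\nn - 0.5}$ that does not divide~$\Dt{\nn-1}$ begins with $\gh{\nn-2}\gh{\nn-1}$, and I would need the dichotomy here to force $\gh{\nn - 2} \dive \aa$ in the complementary case. This well-definedness verification is where the delicate point lies.

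For surjectivity of $\BijC\nn\ell$, I would take an element of $\CCCC\nn\ell$, strip off the prefix $\gh{\nn - 2}\gh{\nn - 1}$ (using left cancellativity) to recover a preimage~$\aa \dive \Dt{\nn - 1.5}$, which lies in $\SPP{\nn-1}{\ell-2}$ since $\Dt{\nn-1.5} \dive \Dt{\nn-1}$ by~\eqref{E:Demi2}; and take an element $\gh{\nn-1}\gh{\nn-2}\gh{\nn-1}\bb$ of $\DDDD\nn\ell$, whose preimage is $\aa := \gh{\nn-2}\bb$, which lies in $\SPP{\nn-1}{\ell-2}$ because $\gh{\nn-2}\bb \dive \Dt{\nn-1}$ by the defining condition of~$\DDDD\nn\ell$. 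Injectivity is the main obstacle, and the key is that the two branches have \emph{disjoint} ranges: this is precisely Lemma~\ref{L:EnsDisjoints2�2}, which gives $\CCCC\nn\ell \cap \DDDD\nn\ell = \emptyset$. Within each branch, injectivity again reduces to left cancellativity. Finally I would confirm, using the identity $\gh{\nn - 1}\gh{\nn - 2}\gh{\nn - 1} \eqp \gh{\nn - 2}\gh{\nn - 1}\gh{\nn + 1}$ noted in the proof of Lemma~\ref{L:EnsDisjoints2�2}, that the two branches are genuinely distinguished by the condition $\aa \dive \Dt{\nn - 1.5}$ and that no element of $\SPP{\nn-1}{\ell-2}$ maps ambiguously. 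The hard part throughout is keeping track of the case split and invoking the disjointness and cancellation results at exactly the right moments; once those are in place, the three bijections follow without further computation.
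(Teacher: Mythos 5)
Your proposal follows the same route as the paper's own proof: the first two bijections are read off from the definitions of~$\AAAA\nn\ell$ and~$\BBBB\nn\ell$ plus left cancellativity, and for~$\BijC\nn\ell$ the paper likewise partitions $\SPP{\nn-1}{\ell-2}$ according to whether $\aa \dive \Dt{\nn - 1.5}$ holds, sends the two pieces to~$\CCCC\nn\ell$ and~$\DDDD\nn\ell$ respectively, and concludes by the disjointness of those two sets together with left cancellativity. The one slip is your justification of the second branch: to deduce $\gh{\nn-2} \dive \aa$ from $\aa \dive \Dt{\nn-1}$ and $\aa \not\dive \Dt{\nn - 1.5}$ you invoke Lemma~\ref{L:Divnn2}\ITEM3, but that item concerns left divisors of~$\Dt{\nn - 0.5}$ that fail to divide~$\Dt{\nn-1}$; since your~$\aa$ \emph{does} divide~$\Dt{\nn-1}$, that dichotomy is satisfied vacuously and yields no information about a prefix~$\gh{\nn-2}$. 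The fact you need is Lemma~\ref{L:Divnn2}\ITEM2 applied at index~$\nn-1$: for $\aa \dive \Dt{\nn-1}$, the conditions $\aa \dive \Dt{\nn - 1.5}$ and $\gh{\nn-2} \not\dive \aa$ are equivalent, which is exactly what the paper invokes at this point. With that citation corrected, your argument coincides with the paper's.
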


\begin{proof}
The result for~$\BijA\nn\ell$ directly follows from the definition of~$\AAAA\nn\ell$. For~$\BijB\nn\ell$, it follows from the definition of~$\BBBB\nn\ell$ and the left cancellativity of~$\HHp$, which ensures that $\BijB\nn\ell$ is injective. Finally, for~$\BijC\nn\ell$, put
$$\XX_1:= \{\aa \in \SPP{\nn - 1}{\ell - 2} \mid \aa \dive \Dt{\nn - 1.5}\} \quad\text{and}\quad \XX_2:= \{\aa \in \SPP{\nn - 1}{\ell - 2} \mid \aa \not\dive \Dt{\nn - 1.5}\}.$$
It follows from the definition of~$\CCCC\nn\ell$ and the left cancellativity of~$\HHp$ that $\BijC\nn\ell$ establishes a bijection from~$\XX_1$ to~$\CCCC\nn\ell$. On the other hand, for~$\aa$ in~$\XX_1$, Lemma~\ref{L:Divnn2}\ITEM2 implies~$\gh{\nn - 2} \dive \aa$, say $\aa = \gh{\nn - 2}\bb$, and then the left cancellativity of~$\HHp$ implies that $\BijC\nn\ell$ establishes a bijection from~$\XX_2$ to~$\DDDD\nn\ell$. As $\CCCC\nn\ell$ and $\DDDD\nn\ell$ are disjoint, this completes the proof.
\end{proof}

Lemma~\ref{L:Bij} immediately implies that, if we denote by~$\NPP\nn\ell$ the cardinal of~$\SPP\nn\ell$, then the numbers~$\NPP\nn\ell$ are determined by the inductive rule
\begin{equation}\label{E:DenRec}
\NPP\nn\ell =\NPP{\nn-1}{\ell}+\NPP{\nn-1}{\ell-1}+\NPP{\nn-1}{\ell-2},
\end{equation}
starting from the initial values $\NPP20 = \NPP21 = 1$. It follows that the numbers~$\NPP\nn\ell$ appear in the generalized Pascal triangle in which each entry is the sum of the three entries above it, starting from the row~$(1, 1)$, see Figure~\ref{F:Triangle}. An obvious induction from~\eqref{E:DenRec} shows that $\NPP\nn\ell$ is the coefficient of~$\xx^{\ell - 1}$ in~$(1 + \xx)(1 + \xx + \xx^2)^{\nn - 2}$, that $\NPP\nn\ell = \NPP\nn{2\nn - 3 - \ell}$ holds for~$\nn - 1 \le \ell \le 2\nn - 3$, and that, for $0 \le \ell \le \nn - 2$, the number~$\NPP\nn\ell$ is the number of (compact-rooted) directed animals of size~$\nn - 1$ with $\nn - 1 - \ell$ source points, see~\cite[Table~1]{GoV} and~\cite[sequence 005773]{OEIS}. In particular, the highest value occurring in the $\nn-1$st row of Figure~\ref{F:Triangle} (the one that corresponds to the divisors of~$\Dt\nn$), namely~$\NPP\nn{\nn - 2}$ and~$\NPP\nn{\nn - 1}$,---that is, the sequence $1$, $2$, $5$, $13$, $35$, ...---is the number of directed animals of size~$\nn - 1$ with one source point. Finding an explicit direct bijection between the divisors of~$\Dt\nn$ in~$\HHp$ and size~$\nn - 1$ directed animals~\cite{GoV, Vie}---or, equivalently, ``arbres guingois'' or bicolored Motzkin paths~\cite{BeP}---is a natural open question. 

From~\eqref{E:DenRec} again, it is clear that the total number of left divisors of~$\Dt\nn$ triples when one goes from a row of the triangle to the next one, and, as $\Dt2$ admits two left divisors, we obtain

\begin{prop}\label{P:Denombr}
For $\nn\ge 2$, the number of left divisors of~$\Dt\nn$ in~$\HHp$ is~$2 \cdot 3^{\nn-2}$.
\end{prop}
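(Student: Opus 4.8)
The plan is to derive Proposition~\ref{P:Denombr} directly from the recurrence~\eqref{E:DenRec} for the numbers~$\NPP\nn\ell = \card{\SPP\nn\ell}$, which is itself an immediate consequence of the partition established in Proposition~\ref{P:Part}. Writing $\NN_\nn := \card{\SP\nn}$ for the total number of left divisors of~$\Dt\nn$, the key observation is that summing~\eqref{E:DenRec} over all~$\ell$ telescopes the index~$\nn$ against a constant factor. First I would fix~$\nn \ge 3$ and sum the relation $\NPP\nn\ell = \NPP{\nn-1}\ell + \NPP{\nn-1}{\ell-1} + \NPP{\nn-1}{\ell-2}$ over all integers~$\ell \ge 0$, using that $\NPP\nn\ell = 0$ for~$\ell < 0$.

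Carrying out this summation, each of the three terms on the right contributes a copy of $\sum_\ell \NPP{\nn-1}\ell = \NN_{\nn-1}$ once one reindexes the shifted sums; since the index shifts by~$1$ and~$2$ only relabel a sum that already runs over all nonnegative integers, one obtains
\begin{equation*}
\NN_\nn = \sum_{\ell \ge 0} \NPP\nn\ell = 3 \sum_{\ell \ge 0}\NPP{\nn-1}\ell = 3\,\NN_{\nn-1}.
\end{equation*}
This is exactly the statement, quoted just before the proposition, that the total number of left divisors triples from one row of the triangle to the next. The base case is then supplied by~$\SP2$, which by Lemma~\ref{L:AtomDiv} and Definition~\ref{D:Delta} consists precisely of~$1$ and~$\gh1$, so~$\NN_2 = 2$.

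From $\NN_2 = 2$ and $\NN_\nn = 3\,\NN_{\nn-1}$ for~$\nn \ge 3$, a one-line induction gives $\NN_\nn = 2 \cdot 3^{\nn-2}$ for all~$\nn \ge 2$, which is the desired count. I would present the argument in this order: invoke Proposition~\ref{P:Part} to justify~\eqref{E:DenRec}, sum over~$\ell$ to get the factor-$3$ recurrence, record the initial value~$\NN_2 = 2$, and conclude by induction.

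Honestly, there is no genuine obstacle left at this stage: all the substantial work has already been done, in Proposition~\ref{P:Part} (the partition into the four families~$\AAAA\nn\ell, \BBBB\nn\ell, \CCCC\nn\ell, \DDDD\nn\ell$) and in Lemma~\ref{L:Bij} (the bijections that yield~\eqref{E:DenRec}). The only point requiring the slightest care is the bookkeeping in the summation—making sure that summing the two index-shifted terms over all~$\ell \ge 0$ really reproduces the full sum $\NN_{\nn-1}$ and does not drop or double-count boundary terms. Since $\NPP\nn\ell$ vanishes outside the range $0 \le \ell \le 2\nn-3$, this is routine, and the factor~$3$ emerges cleanly.
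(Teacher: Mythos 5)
Your proposal is correct and follows exactly the paper's own argument: the paper likewise derives the count by summing the recurrence~\eqref{E:DenRec} over~$\ell$ to see that the total triples from one row to the next, and then uses the base case that $\Dt2$ has two left divisors. Nothing to add.
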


The number of simple elements of index~$\nn$ is $\sum_\ell\NPP\nn\ell - \sum_\ell\NPP{\nn - 1}\ell$, hence it is $4 \cdot 3^{\nn-3}$: so $2/3$ of the left divisors of~$\Dt\nn$ have index~$\nn$, whereas $1/3$ has index~${<}\,\nn$.

\begin{figure}[htb]
\begin{picture}(72,30)(0,0)
\setlength{\unitlength}{0.8mm}
\psset{unit=0.8mm}
\put(40,32){$1$}
\put(50,32){$1$}
\put(30,24){$1$}
\put(40,24){$2$}
\put(50,24){$2$}
\put(60,24){$1$}
\put(20,16){$1$}
\put(30,16){$3$}
\put(40,16){$5$}
\put(50,16){$5$}
\put(60,16){$3$}
\put(70,16){$1$}
\put(10,8){$1$}
\put(20,8){$4$}
\put(30,8){$9$}
\put(40,8){$13$}
\put(50,8){$13$}
\put(60,8){$9$}
\put(70,8){$4$}
\put(80,8){$1$}
\put(0,0){$1$}
\put(10,0){$5$}
\put(20,0){$14$}
\put(30,0){$26$}
\put(40,0){$35$}
\put(50,0){$35$}
\put(60,0){$26$}
\put(70,0){$14$}
\put(80,0){$5$}
\put(90,0){$1$}
\pspolygon[linearc=2,linewidth=0.5pt](31,5)(15,19.8)(47,19.8)(31,5)
\end{picture}
\caption{\sf Generalized Pascal triangle generating the numbers~$\NPP\nn\ell$: each entry is the sum of the three entries above it: for instance, we find $\NPP52 = 9 = 1 + 3 + 5 = \NPP40 + \NPP41 + \NPP42$; missing values are~$0$.}
\label{F:Triangle}
\end{figure}
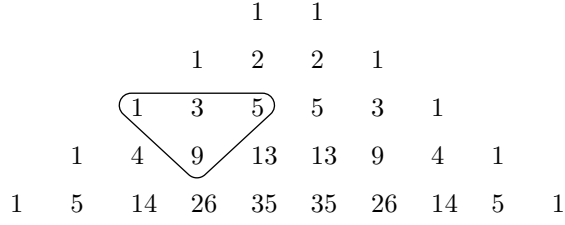

\section{Normal form of simple elements}\label{S:NF}

We complete the investigation of simple elements in~$\HHp$ by determining their normal form. In Section~\ref{SS:GarF}, we saw that normal elements of~$\FFp$ are those, whose normal form is a word in which the indices of the generators decrease, which amounts to saying that a word is the normal form of a simple element if, and only if, it has no factor~$\gh\ii \gh\jj$ with $\jj \ge \ii$. We shall establish below a similar result characterizing the normal form of simple elements in terms of forbidden factors of length~$2$ and~$3$.

\subsection{The key lemma}\label{SS:Cle}

A direct inspection shows that the normal forms of the six simple elements of index~${\le}\,3$, \ie, of the six left divisors of~$\Dt3$, are $\ew$, $\gh1$, $\gh2$, $\gh1\gh2$, $\gh2\gh1$, and~$\gh2\gh1\gh2$. The following result will then enable one to inductively determine the normal form of a simple element according to its position in the partition of Proposition~\ref{P:Part}. 

\begin{lemm}\label{L:Key}
For every simple element~$\aa$ of index $\nn \ge 4$ in~$\HHp$, there exists~$\bb$ of index ${<}\, \nn$ such that exactly one of the following holds:
\begin{align}
\label{E:Main2}
&\hbox to 20mm{\HS{-1}$(\tII)$\hfil} \bb \dive \Dt{\nn - 1} &&\HS{-3}\text{and}\quad \NF(\aa) = \gh{\nn-1}\NF(\bb),\\
\label{E:Main3}
&\hbox to 20mm{\HS{-1}$(\tIII)$\hfil} \bb \dive \Dt{\nn - 1.5} &&\HS{-3}\text{and}\quad \NF(\aa) = \gh{\nn - 2}\gh{\nn-1}\NF(\bb),\\
\label{E:Main4}
&\hbox to 20mm{\HS{-1}$(\tIV)$\hfil} \gh{\nn - 2}\bb \dive \Dt{\nn - 1} &&\HS{-3}\text{and}\quad \NF(\aa)= \gh{\nn-1}\gh{\nn - 2}\gh{\nn-1}\NF(\bb).
\end{align}
\end{lemm}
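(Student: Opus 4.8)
The plan is to obtain the element $\bb$ and the three-way alternative for free from the partition of Proposition~\ref{P:Part}, and then to spend the real effort on proving that three explicit words are $\EH$-reduced. First I would set $\ell := \lgg\aa$ and note that, since $\aa$ has index exactly~$\nn$, it left divides~$\Dt\nn$ but not~$\Dt{\nn-1}$; hence $\aa \in \SPP\nn\ell$ while $\aa \notin \AAAA\nn\ell$, the latter being the length-$\ell$ left divisors of~$\Dt{\nn-1}$. By Proposition~\ref{P:Part}, $\aa$ then lies in exactly one of $\BBBB\nn\ell$, $\CCCC\nn\ell$, $\DDDD\nn\ell$, and the defining shape of that block supplies $\bb$: respectively $\aa=\gh{\nn-1}\bb$ with $\bb\dive\Dt{\nn-1}$; $\aa=\gh{\nn-2}\gh{\nn-1}\bb$ with $\bb\dive\Dt{\nn-1.5}$; and $\aa=\gh{\nn-1}\gh{\nn-2}\gh{\nn-1}\bb$ with $\gh{\nn-2}\bb\dive\Dt{\nn-1}$. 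In the last case, left cancelling $\gh{\nn-2}$ in the identity $\Dt{\nn-1}=\gh{\nn-2}\Dt{\nn-1.5}$ (equation~\eqref{E:Delta1}) yields $\bb\dive\Dt{\nn-1.5}$ as well. In all three cases $\bb$ divides~$\Dt{\nn-1}$, so $\ind\bb<\nn$, as required.

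Next I would observe that each of \eqref{E:Main2}--\eqref{E:Main4} asserts that one explicit word $W$, namely a short prefix followed by $\NF(\bb)$, equals $\NF(\aa)$. Since $W$ already represents $\aa$ by construction, the uniqueness of $\EH$-normal forms (Proposition~\ref{P:RedH}) reduces everything to showing that $W$ is $\EH$-reduced. If $\bb=\ew$ then $W$ is just the prefix, reduced by inspection, so I may assume $\NF(\bb)=\gh{\ii_1}\gh{\ii_2}\cdots$ with first letter satisfying $\gh{\ii_1}\dive\bb$. The one external ingredient is a ceiling on~$\ii_1$: by transitivity of~$\dive$ and Lemma~\ref{L:AtomDiv}, the divisibility $\bb\dive\Dt{\nn-1}$ forces $\ii_1\le\nn-2$ (case $\BBBB$), whereas $\bb\dive\Dt{\nn-1.5}=\Dt{(\nn-2)+0.5}$ forces $\ii_1\le\nn-3$ (cases $\CCCC$ and $\DDDD$).

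Finally I would check that $W$ avoids the obstruction set $\LI$ of~\eqref{E:Obst}. Since $\NF(\bb)$ is reduced and the prepended prefix has length at most~$3$, the only factors of~$W$ that could lie in~$\LI$ are the length-$2$ and length-$3$ factors meeting that prefix (Corollary~\ref{C:RedH}\ITEM2), and I would test each against the two patterns $\gh\ii\gh\jj$ with $\jj\ge\ii+3$ and $\gh\ii\gh{\ii+1}\gh{\ii+3}$. The bound on~$\ii_1$ disposes of all of them: for instance $\gh{\nn-1}\gh{\ii_1}$ would be forbidden only for $\ii_1\ge\nn+2$, the factor $\gh{\nn-1}\gh{\ii_1}\gh{\ii_2}$ only for $\ii_1=\nn$, and $\gh{\nn-2}\gh{\nn-1}\gh{\ii_1}$ (case $\DDDD$) only for $\ii_1=\nn+1$, all excluded, while the internal prefix factors $\gh{\nn-1}\gh{\nn-2}$, $\gh{\nn-2}\gh{\nn-1}$, $\gh{\nn-1}\gh{\nn-2}\gh{\nn-1}$ are visibly reduced. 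Thus $W$ is reduced, so $W=\NF(\aa)$; and ``exactly one'' of \eqref{E:Main2}--\eqref{E:Main4} holds because the three prefixes are distinct words, so the asserted normal forms cannot coincide, and disjointness of the blocks (Proposition~\ref{P:Part}) already pins down a unique case. The hard part is not conceptual but the careful bookkeeping of these junction factors together with the correct half-index reading of Lemma~\ref{L:AtomDiv}; once those are nailed down, the three cases are mechanical and give \eqref{E:Main2}, \eqref{E:Main3}, \eqref{E:Main4} in turn.
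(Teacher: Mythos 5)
Your proposal is correct and follows essentially the same route as the paper's proof: invoke Proposition~\ref{P:Part} to place $\aa$ in exactly one of $\BBBB\nn\ell$, $\CCCC\nn\ell$, $\DDDD\nn\ell$, extract $\bb$ from the defining shape of that block, bound the first letter of $\NF(\bb)$ via Lemma~\ref{L:AtomDiv}, and conclude that the prefixed word is $\EH$-reduced, hence equal to $\NF(\aa)$ by uniqueness of normal forms. Your junction-factor check against $\LI$ is just a more explicit version of the step the paper compresses into its statement~\eqref{E:Main8}.
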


\begin{proof}
Let $\ell:= \lgg\aa$. By assumption, $\aa$ belongs to~$\SPP\nn\ell \setminus \SPP{\nn - 1}\ell$. Then, by Proposition~\ref{P:Part}, $\aa$ belongs to exactly one of $\BBBB\nn\ell$, $\CCCC\nn\ell$, or~$\DDDD\nn\ell$. So there exists~$\bb$ such that exactly one of the following holds:
\begin{align}
\label{E:Main5}
&\hbox to 20mm{\HS{-3}$(\tII)$\hfil} \bb \dive \Dt{\nn - 1} &&\HS{-3}\text{and}\quad \aa = \gh{\nn-1}\,\bb,\\
\label{E:Main6}
&\hbox to 20mm{\HS{-3}$(\tIII)$\hfil} \bb \dive \Dt{\nn - 1.5} &&\HS{-3}\text{and}\quad \aa = \gh{\nn - 2}\gh{\nn-1}\,\bb,\\
\label{E:Main7}
&\hbox to 20mm{\HS{-3}$(\tIV)$\hfil} \gh{\nn - 2}\bb \dive \Dt{\nn - 1} &&\HS{-3}\text{and}\quad \aa= \gh{\nn-1}\gh{\nn - 2}\gh{\nn-1}\,\bb.
\end{align}
In the case of~\eqref{E:Main5}, we have $\bb \dive \Dt{\nn - 1}$, so $\bb$ is simple with $\ind\bb \le \nn - 1$. In the case of~\eqref{E:Main6}, we have $\bb \dive \Dt{\nn - 1.5} \dive \Dt{\nn - 1.5} \gh{3\nn - 8} = \Dt{\nn - 1}$, so, again, $\bb$ is simple with $\ind\bb \le \nn - 1$. Finally, in the case of~\eqref{E:Main7}, $\gh{\nn - 2}\bb \dive \Dt{\nn - 1}$ implies $\bb \dive\Dt{\nn - 1.5}$ by~\eqref{E:Delta1}, whence $\bb \dive \Dt{\nn - 1}$, so $\bb$ is simple with $\ind\bb \le \nn - 1$. So, in every case, $\bb$ is simple with $\ind\bb < \nn = \ind\aa$. Then, by definition of~$\NF$ and by Proposition~\ref{L:AtomDiv}, \begin{equation}\label{E:Main8}
\text{$\NF(\bb)$ is an $\EH$-reduced word and its first letter is among~$\gh1\wdots\gh{\nn-2}$}.
\end{equation} 

In the case of~\eqref{E:Main5}, \eqref{E:Main8} implies that $\gh{\nn - 1}\NF(\bb)$ is $\EH$-reduced, hence it must the normal form of~$\gh{\nn - 1}\bb$, \ie, of~$\aa$, and \eqref{E:Main2} is true. In the case of~\eqref{E:Main6}, \eqref{E:Main8} implies that $\gh{\nn - 2}\gh{\nn - 1}\NF(\bb)$ is $\EH$-reduced, hence it must the normal form of~$\gh{\nn - 2}\gh{\nn - 1}\bb$, \ie, of~$\aa$, and \eqref{E:Main2} is true. Finally, in the case of~\eqref{E:Main7}, \eqref{E:Main8} implies that $\gh{\nn - 1}\gh{\nn - 2}\gh{\nn - 1}\NF(\bb)$ is $\EH$-reduced, hence it must the normal form of~$\gh{\nn - 1} \gh{\nn - 2} \gh{\nn - 1} \bb$, \ie, of~$\aa$, and \eqref{E:Main2} is true. 
\end{proof}

An easy application of Lemma~\ref{L:Key} is that, in addition to the obstructions of~$\LI$, certain factors cannot appear in the normal form of a simple element. 

\begin{lemm}\label{L:Intp}
Put
\begin{equation}\label{E:FNObss}
\LIp:= \{\gh\ii^2 \mid \ii \ge 1\} \cup \{\gh\ii\gh{\ii+2} \mid \ii \ge 1\} \cup \{\gh\ii\gh{\ii+1}\gh\ii \mid \ii \ge 1\} \cup \{\gh\ii\gh{\ii+1}\gh{\ii+2}\mid \ii \ge 1\}.
\end{equation}
Then the normal form of a simple element of~$\HHp$ contains no factor in~$\LIp$.
\end{lemm}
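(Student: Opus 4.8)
The plan is to argue by induction on the index~$\ind\aa$ of the simple element~$\aa$, using Lemma~\ref{L:Key} as the inductive engine. For the base case, I would directly inspect the simple elements of index~${\le}\,3$, whose normal forms were recorded above as $\ew$, $\gh1$, $\gh2$, $\gh1\gh2$, $\gh2\gh1$, and~$\gh2\gh1\gh2$: a quick check against the four patterns defining~$\LIp$ shows that none of their length~$2$ or length~$3$ factors (namely $\gh1\gh2$, $\gh2\gh1$, and~$\gh2\gh1\gh2$) lies in~$\LIp$.

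For the inductive step, let $\aa$ be simple with $\ind\aa = \nn \ge 4$, and assume the statement for all simple elements of strictly smaller index. By Lemma~\ref{L:Key}, there is a simple~$\bb$ with $\ind\bb < \nn$ such that $\NF(\aa)$ equals $\gh{\nn-1}\NF(\bb)$ ($\tII$), $\gh{\nn-2}\gh{\nn-1}\NF(\bb)$ ($\tIII$), or $\gh{\nn-1}\gh{\nn-2}\gh{\nn-1}\NF(\bb)$ ($\tIV$); in each case $\NF(\aa)$ is the genuine normal form. The induction hypothesis says that $\NF(\bb)$ has no factor in~$\LIp$, so the whole task reduces to checking that the finitely many length~$2$ and length~$3$ factors of~$\NF(\aa)$ straddling the junction between the prepended prefix and~$\NF(\bb)$ avoid~$\LIp$. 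If $\bb = 1$ these are absent and one only inspects the short words $\gh{\nn-1}$, $\gh{\nn-2}\gh{\nn-1}$, $\gh{\nn-1}\gh{\nn-2}\gh{\nn-1}$, none of which lies in~$\LIp$. Otherwise, writing $\gh{m}$ for the first letter of~$\NF(\bb)$ (and $\gh{m'}$ for its second letter, when present), and recalling from the proof of Lemma~\ref{L:Key} that $m \le \nn-2$, the straddling factors to examine are the length~$2$ words $\gh{\nn-2}\gh{\nn-1}$, $\gh{\nn-1}\gh{\nn-2}$, $\gh{\nn-1}\gh{m}$ and the length~$3$ words $\gh{\nn-1}\gh{\nn-2}\gh{\nn-1}$, $\gh{\nn-1}\gh{m}\gh{m'}$, and $\gh{\nn-2}\gh{\nn-1}\gh{m}$.

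Most of these are dismissed routinely using $m \le \nn-2$: the bound forces the relevant index gaps to differ from those required by the length~$2$ patterns $\gh\ii^2$ and $\gh\ii\gh{\ii+2}$, and, since each length~$3$ pattern of~$\LIp$ opens with a $+1$ step~$\gh\ii\gh{\ii+1}$, it also excludes $\gh{\nn-1}\gh{\nn-2}\gh{\nn-1}$ and $\gh{\nn-1}\gh{m}\gh{m'}$, whose first step is not $+1$. The one genuinely delicate factor, and the main obstacle, is $\gh{\nn-2}\gh{\nn-1}\gh{m}$, which occurs in~$\tIII$ and~$\tIV$ and whose first step \emph{is} $+1$: it would fall into~$\LIp$ exactly when $m = \nn-2$ (giving $\gh{\nn-2}\gh{\nn-1}\gh{\nn-2}$, the pattern $\gh\ii\gh{\ii+1}\gh\ii$) or $m = \nn$ (already excluded). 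Killing the case $m = \nn-2$ needs a bound sharper than $m \le \nn-2$, and this is where I would use the defining conditions of~$\tIII$ and~$\tIV$: in both one has $\bb \dive \Dt{\nn - 1.5}$ (for~$\tIV$ this follows from $\gh{\nn - 2}\bb \dive \Dt{\nn - 1}$ via~\eqref{E:Delta1}). Since $\Dt{\nn - 1.5} = \Dt{(\nn-2) + 0.5}$, Lemma~\ref{L:AtomDiv} guarantees that no atom~$\gh\ii$ with $\ii \ge \nn - 2$ left divides~$\Dt{\nn - 1.5}$; as $\gh{m} \dive \bb \dive \Dt{\nn - 1.5}$, this forces $m \le \nn - 3$, hence $m \ne \nn - 2$ and $\gh{\nn-2}\gh{\nn-1}\gh{m} \notin \LIp$. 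This disposes of every straddling factor and completes the induction.
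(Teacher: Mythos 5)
Your proposal is correct and follows essentially the same route as the paper: induction on the index, direct inspection for index~${\le}\,3$, and Lemma~\ref{L:Key} to reduce the inductive step to checking the factors straddling the prepended prefix. You even make explicit the one point the paper leaves implicit, namely that in types~$\mathrm{II}_1$ and~$\mathrm{II}_2$ the first letter of~$\NF(\bb)$ has index at most~$\nn-3$ because $\bb \dive \Dt{\nn-1.5}$ together with Lemma~\ref{L:AtomDiv} excludes~$\gh{\nn-2}$.
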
	

\begin{proof}
We prove the result for a simple element~$\aa$ using induction on the index~$\nn$ of~$\aa$. For $\nn \le 3$, a direct inspection of the six possible words gives the result. Assume $\nn \ge 4$. By Lemma~\ref{L:Key}, there exists~$\bb$ simple of index~${<}\nn$ such that exactly one of~\eqref{E:Main2}, \eqref{E:Main3}, or~\eqref{E:Main4} holds. By induction hypothesis, the word~$\NF(\bb)$ contains no factor of~$\LIp$, and we only have to check that the letters added to transform~$\NF(\bb)$ into~$\NF(\aa)$ create no factor in~$\LIp$. As the index of~$\bb$ is~$< \nn$, Lemma~\ref{L:AtomDiv} guarantees that the first letter of~$\NF(\bb)$ must be among~$\gh1 \wdots \gh{\nn - 2}$.

In the case of~\eqref{E:Main2}, $\NF(\aa)$ begins with~$\gh{\nn - 1} \gh\jj$ with $1 \le\nobreak \jj \le\nobreak \nn - 2$: this length~$2$ word is not in~$\LIp$, and it is not the prefix of a word of~$\LIp$ either. Similarly, in the case of~\eqref{E:Main3}, $\NF(\aa)$ begins with~$\gh{\nn - 2} \gh{\nn - 1} \gh\jj$ with $1 \le \jj \le \nn - 3$, and this length~$3$ word includes no factor in~$\LIp$, nor can it contribute to a factor in~$\LIp$. Finally, in the case of~\eqref{E:Main4}, $\NF(\aa)$ begins with~$\gh{\nn - 1} \gh{\nn - 2} \gh{\nn - 1} \gh\jj$ with $1 \le \jj \le \nn - 3$, and, again, this length~$4$ word includes no factor in~$\LIp$, nor can it either contribute to a factor in~$\LIp$. So, in every case, the word~$\NF(\aa)$ has no factor in~$\LIp$.
\end{proof}

We use Lemma~\ref{L:Key} once more to establish a constraint about the first letter of a normal word.

\begin{lemm}\label{L:PremDiv}
If $\aa \dive \Dt\nn$ and $\gh{\nn - 1} \dive \aa$ hold, the first letter of~$\NF(\aa)$ is~$\gh{\nn - 1}$.
\end{lemm}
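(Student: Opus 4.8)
The plan is to write $\aa = \gh{\nn-1}\bb$ with $\bb \dive \Dt{\nn-0.5}$, and then to prove that prepending $\gh{\nn-1}$ to $\NF(\bb)$ already yields an $\EH$-reduced word, which must therefore be $\NF(\aa)$.

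First I would extract the factorisation from Lemma~\ref{L:Divnn2}. Since $\aa \dive \Dt\nn$ and $\gh{\nn-1} \dive \aa$, part~\ITEM2 of that lemma gives $\aa \not\dive \Dt{\nn-0.5}$. Hence part~\ITEM1 forces $\aa$ into the second alternative: $\aa = \cc\gh\kk\dd$ with $\cc\dd \dive \Dt{\nn-0.5}$ and $\cc\gh\kk = \gh{\nn-1}\cc$. Setting $\bb := \cc\dd$, we obtain $\aa = \gh{\nn-1}\bb$ with $\bb \dive \Dt{\nn-0.5}$. The cases $\nn = 2$ and $\bb = 1$, where $\aa = \gh{\nn-1}$, are immediate.

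Next I would bound the first letter of $\NF(\bb)$. Writing $\NF(\bb) = \gh\jj\ww$, we have $\gh\jj \dive \bb \dive \Dt{\nn-0.5}$, so $\gh\jj$ is an atom left dividing $\Dt{\nn-0.5}$. Applying Lemma~\ref{L:AtomDiv} with $\nn$ replaced by $\nn-1$ (legitimate for $\nn \ge 3$), no $\gh\ii$ with $\ii \ge \nn-1$ left divides $\Dt{\nn-0.5}$; therefore $\jj \le \nn-2$.

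Finally I would verify reducedness at the junction. As $\gh{\nn-1}$ and $\NF(\bb)$ are each $\EH$-reduced, any obstruction in $\gh{\nn-1}\NF(\bb)$ must straddle the join, so I apply Corollary~\ref{C:RedH}\ITEM3 with $\uu = \gh{\nn-1}$, $\vv = \ew$, and $\ww = \NF(\bb)$. Its exceptional cases would require $\NF(\bb)$ to begin with $\gh\jj$ for some $\jj \ge \nn+2$, or with $\gh\nn\gh{\nn+2}$, both excluded by $\jj \le \nn-2$. Hence $\gh{\nn-1}\NF(\bb)$ is $\EH$-reduced; being equivalent to $\aa$, it equals $\NF(\aa)$, whose first letter is $\gh{\nn-1}$. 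The only real choice in the argument is to route everything through Lemma~\ref{L:Divnn2}\ITEM1 and~\ITEM2 and the obstruction list of Corollary~\ref{C:RedH}, rather than manipulating words by hand; once that is done there is no genuine obstacle.
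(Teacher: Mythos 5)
Your proof is correct, and it follows a genuinely different route from the paper's. The paper first argues $\ind\aa = \nn$ and then applies Lemma~\ref{L:Key}: in cases~\eqref{E:Main2} and~\eqref{E:Main4} the normal form visibly begins with~$\gh{\nn-1}$, and case~\eqref{E:Main3}, namely $\aa = \gh{\nn-2}\gh{\nn-1}\bb$, is eliminated by a reversing-grid computation showing that $\gh{\nn-1} \dive \aa$ would force $\gh{\nn+1} \dive \bb \dive \Dt{\nn-1}$, contradicting Lemma~\ref{L:AtomDiv}. You instead stay upstream of the partition of Proposition~\ref{P:Part}: Lemma~\ref{L:Divnn2}\ITEM2 rules out $\aa \dive \Dt{\nn-0.5}$, Lemma~\ref{L:Divnn2}\ITEM1 then gives $\aa = \gh{\nn-1}\bb$ with $\bb \dive \Dt{\nn-0.5}$ --- which lumps the two ``good'' types together and disposes of the $\gh{\nn-2}\gh{\nn-1}\bb$ case for free, with no grid computation --- and you conclude by checking reducedness at the junction via the atom bound of Lemma~\ref{L:AtomDiv} and Corollary~\ref{C:RedH}\ITEM3. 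That last step essentially re-derives, for this one configuration, the observation~\eqref{E:Main8} that the paper keeps inside the proof of Lemma~\ref{L:Key}; so your argument is independent of Lemma~\ref{L:Key} at the cost of redoing a small piece of it, while the paper's is shorter because Lemma~\ref{L:Key} is already in hand. Two minor points to tighten: of the four exceptional cases in Corollary~\ref{C:RedH}\ITEM3 you explicitly dismiss only two, and you should note that the other two are vacuous because $\lgg\uu = 1$ and $\vv = \ew$; and your reading of Lemma~\ref{L:AtomDiv} at index $\nn - 1$ (to bound the first letter of $\NF(\bb)$ by $\gh{\nn-2}$) indeed requires $\nn \ge 3$, which is covered since you set aside $\nn = 2$ beforehand.
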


\begin{proof}
Assume $\aa \dive \Dt\nn$ and $\gh{\nn - 1} \dive \aa$. So~$\aa$ is simple with $\ind\aa \le \nn$. If we had $\ind\aa \le \nn - 1$, hence $\aa \dive \Dt{\nn - 1}$, then $\gh{\nn - 1} \dive \aa$ would be impossible. So we must have $\ind\aa = \nn$. For $\nn \le 3$, a direct inspection of the six possible normal words shows that the result is true. Otherwise, we apply Lemma~\ref{L:Key}. In the cases~\eqref{E:Main2} and~\eqref{E:Main4}, $\NF(\aa)$ explicitly begins with~$\gh{\nn - 1}$. There remains the case of~\eqref{E:Main3}. Assume $\aa = \gh{\nn - 2} \gh{\nn - 1} \bb$ with $\bb \dive \Dt{\nn - 1.5}$, let $\ww$ represent~$\bb$. By constructing a $\PH$-grid from~$(\gh{\nn - 1}, \gh{\nn - 2} \gh{\nn - 1} \ww)$, we see that $\gh{\nn - 1} \dive \aa$ is equivalent to $\gh{\nn + 1} \dive \bb$, hence it implies $\gh{\nn + 1} \dive \bb \dive \Dt{\nn - 1.5} \dive \Dt{\nn - 1.5}\gh{3n - 8} = \Dt{\nn - 1}$, which contradicts Lemma~\ref{L:AtomDiv}. So $\gh{\nn - 1} \dive \aa$ excludes~\eqref{E:Main3}. 
\end{proof}

\subsection{The normal form of simple elements}\label{SS:FNSimple}

Our goal is now to establish that the necessary condition of Lemma~\ref{L:Intp} is also sufficient, thus obtaining a combinatorial characterization of the normal form of simple elements. We begin with a preliminary observation about the indices of the generators~$\gh\ii$ that may appear in words with no factor in~$\LIp$.

\begin{defi}
We put $\HT\ew:= 0$, and, for~$\ww$ nonempty in~$\AHs$, we write~$\HT\ww$ for the largest~$\ii$ such that $\gh\ii$ occurs in~$\ww$. 
\end{defi}

\begin{lemm}\label{L:ObsHaut}
If $\gh\ii \vv$ is $\EH$-reduced with no factor in~$\LIp$, then $\HT\vv \le \ii + 1$ holds. 
\end{lemm}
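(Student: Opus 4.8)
The plan is to track the indices letter by letter. Writing $\gh\ii\vv = \gh{\ii_1}\gh{\ii_2}\pdots\gh{\ii_\ell}$ with $\ii_1 = \ii$, I would first read off from the length-$2$ obstructions exactly which transitions $\ii_p \to \ii_{p+1}$ are permitted inside a word that is $\EH$-reduced and avoids~$\LIp$. A factor $\gh{\ii_p}\gh{\ii_{p+1}}$ is excluded when $\ii_{p+1} = \ii_p$ (by $\gh\ii^2 \in \LIp$), when $\ii_{p+1} = \ii_p + 2$ (by $\gh\ii\gh{\ii+2} \in \LIp$), and when $\ii_{p+1} \ge \ii_p + 3$ (by $\EH$-reducedness, since such a factor lies in~$\LI$). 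Hence every permitted step is either $\ii_{p+1} = \ii_p + 1$ or $\ii_{p+1} < \ii_p$: the index can rise only by exactly one at a time, and can otherwise only drop.

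With this in hand, I would argue by inspecting the first index that is too large. Suppose, for a contradiction, that $\HT\vv \ge \ii + 2$, and let $q$ be the least position with $\ii_q \ge \ii + 2$. Since $\ii_1 = \ii$, we have $q \ge 2$ and, by minimality of~$q$, also $\ii_{q-1} \le \ii + 1$. The step $\ii_{q-1} \to \ii_q$ is therefore an increase, so by the previous paragraph it is an increase by exactly one; this forces $\ii_{q-1} = \ii + 1$ and $\ii_q = \ii + 2$. In particular $q \ge 3$, because $\ii_{q-1} = \ii + 1 \ne \ii = \ii_1$. Looking one step further back, minimality again gives $\ii_{q-2} \le \ii + 1$; and since the step $\ii_{q-2} \to \ii_{q-1} = \ii+1$ cannot be a repetition ($\ii_{q-2} = \ii_{q-1}$ is barred as a factor $\gh{\ii+1}^2$), it too is a $+1$ step, whence $\ii_{q-2} = \ii$. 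But then $\gh{\ii_{q-2}}\gh{\ii_{q-1}}\gh{\ii_q} = \gh\ii\gh{\ii+1}\gh{\ii+2}$ is a factor of~$\gh\ii\vv$ lying in~$\LIp$, contradicting the hypothesis. Thus $\HT\vv \le \ii + 1$.

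The genuinely load-bearing observations are that upward moves are constrained to single steps—which is what turns the global height bound into a purely local statement—and that once the first over-large index $\ii_q = \ii + 2$ is reached, the $+1$-step structure propagates backwards to pin down the three consecutive values $\ii, \ii+1, \ii+2$, producing the length-$3$ obstruction $\gh\ii\gh{\ii+1}\gh{\ii+2}$ of~$\LIp$. I do not expect any real obstacle: no case analysis on the descents is needed, and the only mild bookkeeping is checking $q \ge 3$ so that the letter $\gh{\ii_{q-2}}$ actually exists. Note also that the complementary length-$3$ obstructions $\gh\ii\gh{\ii+1}\gh\ii$ in~$\LIp$ play no role in this particular bound.
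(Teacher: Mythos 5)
Your proof is correct. It rests on the same two local facts as the paper's argument---that the only admissible length-$2$ transitions in an $\EH$-reduced word with no factor in~$\LIp$ are a rise by exactly one or a strict drop, and that $\gh\ii\gh{\ii+1}\gh{\ii+2}$ is forbidden---but you organize them differently: the paper proceeds by induction on~$\lgg\vv$, peeling off the first one or two letters of~$\vv$ and applying the induction hypothesis to the tail, whereas you locate the first position~$q$ at which the index reaches~$\ii+2$ and propagate the $+1$-step constraint backward to pin down the forbidden factor $\gh\ii\gh{\ii+1}\gh{\ii+2}$. Your version is somewhat more economical (no induction, no case split on whether the various suffixes are empty, and no separate treatment of the two admissible first letters $\gh\jj$ with $\jj\le\ii-1$ or $\jj=\ii+1$), and it makes transparent exactly which obstructions of~$\LIp$ carry the load; your closing remark that $\gh\ii\gh{\ii+1}\gh\ii$ plays no role is accurate---the paper invokes it only to narrow the list of possible second letters, not to obtain the bound. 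What the paper's formulation buys is a statement and proof shaped like the surrounding arguments (notably Lemma~\ref{L:ObsNorm}, which consumes this lemma inside a longer induction), but nothing there depends on that choice, and your argument could be substituted without change elsewhere.
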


\begin{proof}
We use induction on~$\lgg\vv$. For $\lgg\vv = 0$, the result is vacuously true. Assume $\lgg\vv \ge 1$, and write $\vv = \gh\jj \ww$. As $\gh\ii \vv$, \ie, $\gh\ii\gh\jj \ww$, is $\EH$-reduced, it contains no factor in~$\LI$, hence $\jj \ge \ii + 3$ is excluded. On the other hand, as $\gh\ii \vv$ has no factor in~$\LIp$, the values $\jj = \ii$ and $\jj = \ii + 2$ are impossible. So the only possible values for~$\jj$ are $1\wdots\ii - 1$, and~$\ii+1$.
	
Assume first $\jj \le \ii-1$. As a factor of~$\gh\ii \vv$, the word~$\gh\jj \ww$ is reduced with no factor in~$\LIp$. Then the induction hypothesis implies $\HT{\ww} \le \jj + 1$, whence $\HT\vv = \max(\jj, \HT\ww) \le \jj + 1 \le \ii + 1$, as expected.
	
Assume now $\jj = \ii + 1$. The result is true for $\lgg\vv = 1$: the word~$\gh\ii \gh{\ii + 1}$ has no factor in~$\LIp$ and its height is~$\ii + 1$. Assume now $\lgg\vv \ge 2$, and write $\vv = \gh{\ii+1}\gh\kk\ww$. As $\vv$ has no factor in~$\LI$, the values $\kk \ge \jj + 3 = \ii + 4$ are forbidden, and, as $\gh\ii\vv$ has no factor in~$\LIp$, the values $\kk = \ii$, $\kk = \ii + 1$, and $\kk = \ii + 2$ are also excluded. So we must have $\kk \le \ii-1$. As $\gh\kk\ww$ is reduced with no factor in~$\LIp$, the induction hypothesis implies $\HT{\ww} \le \kk + 1$, whence $\HT{\vv} = \max(\ii + 1, \kk, \HT\ww) \le \max(\ii + 1, \kk + 1) = \ii + 1$, as expected.
\end{proof}

Completing the characterization of the normal forms of simple elements then relies on a long inductive argument. 	
\begin{lemm}\label{L:ObsNorm}
If $\uu$ is a reduced word of~$\AHs$ with no factor in~$\LIp$, then $\uu$ is the normal form of a simple element with index at most~$\HT\uu + 1$.
\end{lemm}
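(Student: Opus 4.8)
The plan is to prove the statement by induction on the length $\lgg\uu$, building the simple element $\aa$ represented by $\uu$ from a shorter simple element in a way that mirrors the three-way case analysis of Lemma~\ref{L:Key}. Since Lemma~\ref{L:Intp} already gives the converse (normal forms of simples avoid the factors in~$\LIp$), this lemma closes the characterization. For the base cases $\lgg\uu \le 2$, a direct inspection of the finitely many $\EH$-reduced words avoiding~$\LIp$ suffices, matching them against the six normal forms of the simple elements of index~${\le}3$ listed at the start of Section~\ref{SS:Cle}.

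For the inductive step, I would write $\nn := \HT\uu + 1$ and analyze the prefix of~$\uu$, using Lemma~\ref{L:ObsHaut} as the main structural tool: it forces every letter following a~$\gh\ii$ to have index at most~$\ii+1$, so the generator~$\gh{\nn-1}$ (the maximal index) can only occur in very constrained positions near the front. The goal is to peel off a prefix matching one of the three patterns $\gh{\nn-1}$, $\gh{\nn-2}\gh{\nn-1}$, or $\gh{\nn-1}\gh{\nn-2}\gh{\nn-1}$ of types~$(\tII)$, $(\tIII)$, $(\tIV)$, so as to write $\uu = p\,\vv$ with $p$ one of these prefixes and $\vv$ a strictly shorter $\EH$-reduced word with no factor in~$\LIp$. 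The induction hypothesis then gives that $\vv$ represents a simple element $\bb$ with $\ind\bb \le \HT\vv + 1$; checking that $\HT\vv \le \nn-2$ (again via Lemma~\ref{L:ObsHaut} applied to the letter after the peeled prefix) shows $\bb \dive \Dt{\nn-1}$. One then reassembles $\aa$ via the appropriate clause of Lemma~\ref{L:Key}, verifying in each case that $p\,\NF(\bb)$ is still $\EH$-reduced (so that it is genuinely $\NF(\aa)$) and that the divisibility condition required to place~$\aa$ in~$\BBBB\nn\ell$, $\CCCC\nn\ell$, or~$\DDDD\nn\ell$ holds—here Lemma~\ref{L:PremDiv} helps distinguish whether $\gh{\nn-1}$ leads the word, and Lemma~\ref{L:Divnn2}\ITEM2 handles the $\gh{\nn-2}$-divisibility bookkeeping in the type~$(\tIV)$ case.

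The delicate point, and the main obstacle I expect, is the \emph{disambiguation} of which of the three prefix patterns actually occurs, and proving that the peeling is forced rather than merely possible. Because $\gh{\nn-1}\gh{\nn-2}\gh{\nn-1}$ contains $\gh{\nn-1}\gh{\nn-2}$ and $\gh{\nn-2}\gh{\nn-1}$ as overlapping factors, one must argue carefully from the absence of the forbidden factors $\gh\ii^2$, $\gh\ii\gh{\ii+1}\gh\ii$, and $\gh\ii\gh{\ii+1}\gh{\ii+2}$ that the initial segment of~$\uu$ takes exactly one of the admissible shapes; for instance, a leading $\gh{\nn-1}\gh{\nn-2}$ must be followed either by the end of that pattern or by a $\gh{\nn-1}$ (forcing type~$(\tIV)$), since any other continuation would create a factor in~$\LIp$ or violate Lemma~\ref{L:ObsHaut}. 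The second source of difficulty is confirming that after peeling, the residual word $\vv$ still begins with a generator of index at most~$\nn-2$, which is exactly what is needed both to invoke Lemma~\ref{L:ObsHaut} for the height bound and to ensure $p\,\NF(\bb)$ has no newly-created obstruction at the seam. I would organize these seam checks by reusing Corollary~\ref{C:RedH}\ITEM3, and treat the three prefix types in separate numbered cases to keep the verification transparent.
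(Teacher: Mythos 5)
Your plan follows the paper's own proof almost exactly: the paper likewise argues by induction (on~$\HT\uu$ rather than on~$\lgg\uu$, an immaterial difference since the residual word after peeling the prefix has strictly smaller height), uses Lemma~\ref{L:ObsHaut} and the absence of $\LIp$-factors to force the initial segment of~$\uu$ into exactly one of the shapes $\gh{\nn-1}$, $\gh{\nn-2}\gh{\nn-1}$, $\gh{\nn-1}\gh{\nn-2}\gh{\nn-1}$, and then places~$\cl\uu$ into $\BBBB{\nn}{\lgg\uu}$, $\CCCC{\nn}{\lgg\uu}$, or~$\DDDD{\nn}{\lgg\uu}$ via the induction hypothesis together with Lemmas~\ref{L:PremDiv} and~\ref{L:Divnn2}\ITEM2. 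The only adjustments worth noting are that those last two lemmas are needed in the $\tIII$ case as well as the $\tIV$ case (to upgrade $\bb \dive \Dt{\nn-1}$ to $\bb \dive \Dt{\nn-1.5}$), and that the seam-reducedness verification you flag as a difficulty is automatic, since $\uu$ itself is reduced by hypothesis and the residual word is a factor of it.
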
	

\begin{proof}
We will show using induction on $\mm \ge 0$ that, if $\uu$ is an $\EH$-reduced word with no factor in~$\LIp$ and satisfying $\HT\uu = \mm$, then $\cl\uu \dive \Dt{\mm + 1}$ holds. This will imply that $\cl\uu$ is simple with index ${\le}\, \HT\uu + 1$, giving the expected result when $\mm$ varies. So, herafter, we assume that $\uu$ is $\EH$-reduced, has no factor in~$\LIp$, and satisfies $\HT\uu = \mm$; our aim is to establish $\cl\uu \dive \Dt{\mm + 1}$. As can be expected, the various types of Proposition~\ref{P:Part} will appear when we consider the possible cases.

For $\mm = 0$, the word~$\uu$ must be empty. We then find $\cl\uu = 1 \dive \Dt1 = \Dt{\mm + 1}$, as expected. For $\mm = 1$, the only letter occurring in~$\uu$ is~$\gh1$, so~$\uu$ is~$\gh1^\ell$ for some~$\ell \ge 1$. The assumption that $\uu$ has no factor in~$\LIp$ requires $\ell = 1$, whence $\uu = \gh1$. We then find $\cl\uu = \gh1 \dive \gh1 = \Dt2 = \Dt{\mm + 1}$, as expected.

From now on, we assume $\mm \ge 2$. The word~$\uu$ cannot be empty, so it has a first letter, say~$\gh\ii$. By assumption, we have $\mm = \HT\uu$, hence $\ii \le \mm$. On the other hand, Lemma~\ref{L:ObsHaut} implies $\HT\uu \le \ii + 1$, hence $\mm \le \ii + 1$. Therefore, $\uu$ must begin either by~$\gh\mm$, or by~$\gh{\mm - 1}$.

\medskip\noindent{\bf Case 1.} The first letter of~$\uu$ is $\gh{\mm-1}$, say $\uu = \gh{\mm - 1} \vv$. The word~$\vv$ cannot be empty, for otherwise we would have $\uu = \gh{\mm - 1}$ and $\HT\uu = \mm - 1$, contradicting the assumption. Let $\gh\jj$ be the first letter of~$\vv$. By definition, we have $\jj \le \HT\uu = \mm$. Moreover, $\uu$ has no factor in~$\LIp$, so $\jj = \mm - 1$ is impossible. On the other hand, $\vv$, as a factor of~$\uu$, is $\EH$-reduced and has no factor in~$\LIp$, so Lemma~\ref{L:ObsHaut} implies $\HT\vv \le \jj + 1$, and $\jj \le \mm - 2$ would imply $\HT\uu \le \max(\mm - 1, \HT\vv) \le \mm - 1$, contradicting the assumption $\mm = \HT\uu$. So the only possibility is $\jj = \mm$, \ie, $\uu$ begins with~$\gh{\mm - 1}\gh\mm$, say $\uu = \gh{\mm-1}\gh\mm\ww$. 

If $\ww$ is the empty word, we have $\uu = \gh{\mm - 1}\gh\mm$. Applying~\eqref{E:Delta1} twice gives 
$\Dt{\mm+1} =\gh{\mm-1} \gh{\mm} \Dt{\mm-0.5} \gh{3\mm-2}$, which implies $\cl\uu\dive\Dt{\mm+1}$, as expected. 

Assume now that $\ww$ is nonempty, and let $\gh\kk$ be its first letter. As $\ww$ is a factor of~$\uu$, we must have $\kk \le \mm$. As $\uu$, and its factor~$\vv$, have no factor in~$\LIp$, the values $\kk = \mm - 1$ and $\kk = \mm$ are impossible as they would respectively create some factor $\gh{\mm - 1}\gh\mm\gh{\mm - 1}$ and $\gh\mm^2$. So, we necessarily have $\kk <\nobreak \mm - 1$. Since $\ww$, as a factor of~$\uu$, is $\EH$-reduced and has no factor in~$\LIp$, Lemma~\ref{L:ObsHaut} implies $\HT\ww \le \kk + 1$, whence $\HT\ww \le \mm - 1$. The word~$\ww$ is $\EH$-reduced with no factor in~$\LIp$, so the induction hypothesis implies $\cl\ww \dive \Dt{\HT\ww + 1}$, hence a fortiori $\cl\ww \dive \Dt\mm$. Moreover, we know that the first letter of~$\ww$ is not~$\gh{\mm -1}$. By Lemma~\ref{L:PremDiv}, we deduce $\gh{\mm - 1} \not\dive \cl\ww$, and then, by Lemma~\ref{L:Divnn2}\ITEM2, $\cl\ww \dive \Dt{\mm - 0.5}$. By definition, this means that $\cl\uu$ belongs to~$\CCCC{\mm + 1}{\lgg\uu}$ and, therefore, implies $\cl\uu \dive \Dt{\mm + 1}$, as expected. 
	
\medskip\noindent{\bf Case 2.} The first letter of~$\uu$ is $\gh{\mm}$, say $\uu = \gh{\mm} \vv$. If $\vv$ is empty, we have $\uu = \gh\mm$, which has height~$\mm$, and $\cl\uu = \gh\mm \dive \Dt{\mm + 1}$, as expected. 

We now suppose $\vv$ nonempty. Let~$\gh\jj$ be its first letter. The assumption $\HT\uu = \mm$ implies $\jj \le \mm$. As $\uu$ has no factor in~$\LIp$, the value $\jj = \mm$ is impossible, since it would create an initial factor~$\gh\mm^2$. So we have $\jj \le \mm - 1$.
	 
\smallskip{\bf Subcase 2.1.} We have $\jj \le \mm - 2$. Then Lemma~\ref{L:ObsHaut} implies $\HT\vv \le \mm - 1$. Moreover, as a factor of~$\uu$, the word~$\vv$ is $\EH$-reduced and has no factor in~$\LIp$. The induction hypothesis then implies $\cl{\vv}\dive\Dt{\HT\vv + 1}$, hence a fortiori $\cl{\vv} \dive \Dt\mm$. Therefore, we have $\cl\uu = \gh\mm \cl{\vv}$ with $\cl{\vv}\dive\Dt\mm$. This means that $\cl\uu$ lies in~$\BBBB{\mm + 1}{\lgg\uu}$, implying $\cl\uu \dive \Dt{\mm + 1}$, as expected. 
	 
\smallskip{\bf Subcase 2.2.} We have $\jj = \mm - 1$. Write $\vv = \gh{\mm - 1}\ww$, yielding $\uu = \gh{\mm}\gh{\mm - 1}\ww$.

If $\ww$ is empty, we have $\uu = \gh{\mm}\gh{\mm - 1}$. Applying~\eqref{E:Delta1} twice gives the equality $\Dt{\mm+1} = \gh{\mm} \gh{\mm-1}\Dt{\mm-1}\gh{3\nn-7} \gh{3\mm-4}$, whence $\cl\uu\dive\Dt{\mm+1}$, as expected. 

We assume now that $\ww$ is nonempty, with first letter~$\gh\kk$. The assumption $\HT\uu =\nobreak \mm$ implies $\kk \le \mm$. Moreover, as $\uu$ has no factor in~$\LIp$, the value $\kk = \mm - 1$ is impossible, since it would create in position~$2$ a factor~$\gh{\mm - 1}^2$.

\smallskip{\bf Subsubcase 2.2.1.} We have $\kk \le \mm - 2$. As a factor of~$\uu$, the word~$\ww$ is $\EH$-reduced and has no factor in~$\LIp$, so Lemma~\ref{L:ObsHaut} implies $\HT\ww \le \mm -1$, whence $\HT\vv = \mm - 1$. As a factor of~$\uu$, the word~$\vv$ is $\EH$-reduced and has no factor in~$\LIp$, so the induction hypothesis implies $\cl{\vv}\dive\Dt{\HT\vv + 1}$, hence a fortiori $\cl{\vv}\dive\Dt\mm$. This means that $\cl\uu$ lies in~$\BBBB{\mm + 1}{\lgg\uu}$ and implies $\cl\uu \dive \Dt{\mm + 1}$, as expected.

\smallskip{\bf Subsubcase 2.2.2.} We have $\kk = \mm$. Write $\ww = \gh\mm \uu'$, yielding $\uu = \gh\mm\gh{\mm-1}\gh\mm\uu'$. If $\uu'$ is empty, we have $\uu = \gh\mm\gh{\mm-1}\gh\mm$. A direct computation from~\eqref{E:Delta1} gives $\Dt{\mm+1} = \gh{\mm} \gh{\mm-1} \gh\mm \Dt{\mm-1} \gh{3\nn-7}$, whence $\cl\uu \dive \Dt{\mm + 1}$, as expected. 

We assume now that $\uu'$ is nonempty, with first letter~$\gh\ell$. The assumption $\HT\uu =\nobreak \mm$ implies $\ell \le \mm$. The assumption that $\uu$ has no factor in~$\LIp$ excludes $\ell = \mm - 1$ and $\ell = \mm$, as these values would create factors $\gh{\mm - 1}\gh\mm \gh{\mm - 1}$ or~$\gh\mm^2$ in~$\uu$. Next, Lemma~\ref{L:ObsHaut} implies $\HT{\uu'} \le \mm - 1$. Moreover, as a factor of~$\uu$, the word~$\uu'$ is $\EH$-reduced and has no factor in~$\LIp$, so the induction hypothesis implies $\cl{\uu'} \dive \Dt\mm$. By Lemma~\ref{L:PremDiv}, if we had $\gh{\mm - 1} \dive \cl{\uu'}$, the first letter of~$\uu'$ should be~$\gh{\mm - 1}$, contradicting $\ell \le \mm - 2$. Hence we have $\gh{\mm - 1} \not\dive \cl{\uu'}$, whence $\cl{\uu'}\dive\Dt{\mm- 0.5}$ by Lemma~\ref{L:Divnn2}. We then find $\gh{\mm-1}\cl{\uu'}\dive \gh{\mm-1} \Dt{\mm-0.5} = \Dt\mm$. Therefore, $\cl\uu$ has the form $\gh\mm \gh{\mm-1} \gh\mm \cl{\uu'}$ with $\gh{\mm-1} \cl{\uu'}\dive\Dt\mm$. This means that $\cl\uu$ lies in~$\DDDD{\mm + 1}{\lgg\uu}$ and implies $\cl\uu \dive \Dt{\mm + 1}$, as expected. 

Thus, $\cl\uu \dive \Dt{\mm + 1}$ holds in every possible case, and this completes the proof.
\end{proof}

Merging Lemmas~\ref{L:Intp} and~\ref{L:ObsNorm}, we finally obtain:

\begin{prop}\label{P:FNSimp}
A word of~$\AHs$ is the normal form of a simple element of~$\HHp$ if, and only if, it contains no factor in~$\LI$ or~$\LIp$.
\end{prop}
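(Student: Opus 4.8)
The plan is to read Proposition~\ref{P:FNSimp} as an equivalence and to prove each implication by combining the two structural lemmas already available, namely Lemma~\ref{L:Intp} and Lemma~\ref{L:ObsNorm}, with the combinatorial characterization of $\EH$-reduced words recorded just after Proposition~\ref{P:RedH}: a word of~$\AHs$ is $\EH$-reduced if, and only if, it contains no factor in the list~$\LI$ of~\eqref{E:Obst}. Since $\EH$-reduced words are exactly the normal forms of elements of~$\HHp$, this characterization is the bridge that lets me phrase ``being a normal form'' purely in terms of the absence of $\LI$-factors, and thereby reduce the whole statement to a bookkeeping of forbidden factors.

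For the forward implication, I would start from a word~$\uu$ that is the normal form~$\NF(\aa)$ of some simple element~$\aa$. Being a normal form, $\uu$ is $\EH$-reduced, hence, by the characterization above, it contains no factor in~$\LI$. That it also contains no factor in~$\LIp$ is precisely the content of Lemma~\ref{L:Intp}, so this direction requires nothing beyond quoting that lemma.

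For the converse, I would take a word~$\uu$ containing no factor in~$\LI$ nor in~$\LIp$. The absence of $\LI$-factors means, again by the characterization, that $\uu$ is $\EH$-reduced. With $\uu$ now known to be reduced and free of $\LIp$-factors, the hypotheses of Lemma~\ref{L:ObsNorm} are met verbatim, and that lemma yields that $\uu$ is the normal form of a simple element (of index at most~$\HT\uu + 1$). This closes the equivalence.

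I expect no genuine obstacle at this final stage: the substantive work has already been carried out in the two lemmas—in particular in the long induction of Lemma~\ref{L:ObsNorm}—so the proposition is a clean merging of them. The only point demanding a little care is to invoke the characterization of $\EH$-reducedness in \emph{both} directions, so that ``normal form'' and ``no factor in~$\LI$'' may be used interchangeably; once that dictionary is made explicit, each implication reduces to a one-line citation of the corresponding lemma.
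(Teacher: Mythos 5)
Your proposal is correct and matches the paper exactly: the paper states the proposition as an immediate merging of Lemma~\ref{L:Intp} (forward direction, together with the fact that normal forms are $\EH$-reduced, i.e.\ avoid~$\LI$) and Lemma~\ref{L:ObsNorm} (converse). Your explicit spelling-out of the dictionary between ``$\EH$-reduced'' and ``no factor in~$\LI$'' is the only detail the paper leaves implicit.
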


Thus, the monoid~$\HHp$ gives rise to a Garside combinatorics that is quite similar to that of the Thompson monoid~$\FFp$. In both cases, we have a family of simple elements that is filtered by the sequence~$(\Dt\nn)_{\nn \ge 1}$, with finitely many elements below~$\Dt\nn$, namely~$2^{\nn - 1}$ in the case of~$\FFp$ and $2 \cdot 3^{\nn - 2}$ in the case of~$\HHp$, and the normal forms of simple elements are characterized in terms of finitely many types of forbidden factors of length~$2$ or~$3$, namely the factors~$\gf\ii \gf\jj$ with~$\jj \ge \ii$ in the case of~$\FFp$, and the factors~$\gh\ii \gh\jj$ with~$\jj \ge \ii + 2$ or~$\jj = \ii$ and the factors~$\gh\ii \gh{\ii + 1} \gh\jj$ with $\jj = \ii$ or $\jj = \ii + 2$ in the case of~$\HHp$.

However, the parallel is not complete, as, in the case of~$\HHp$, simple elements do not form a Garside family. Indeed, the element~$\gh2\gh4$ is not simple, although it right divides the simple element~$\gh1 \gh2 \gh4$, \ie,~$\Dt3$. It is easy to check that every element of~$\HHp$ admits a greatest simple left divisor, namely its greatest common left divisor with~$\Dt\nn$ for~$\nn$ sufficiently large, and, from there, to show for every element the existence of a greedy decomposition in terms of simple pieces, but the decompositions so obtained fail to obey the good properties that make Garside families interesting. In particular, the ``domino rule'' of~\cite[Prop.~III.1.45]{Dir}, implying that the elements of~$\HHp$ have no well defined degree in terms of simple elements.

The enveloping group of the monoid~$\HHp$ is the group~$\HH$ defined by the presentation~$\PH$. At this point, the most puzzling open problem about~$\HHp$ is

\begin{ques}\label{Q:Embed}
Does the monoid~$\HHp$ embed in the group~Ê$\HH$?
\end{ques}

The monoid~$\HHp$ is cancellative, but some pairs of elements of~$\HHp$ fail to admit a common left multiple, or a common right multiple, and, therefore, contrary to~$\FFp$ and~$\FF$, the group~$\HH$ is not a group of (left or right) fractions for~$\HHp$. As checking the Malcev conditions~\cite{ClP} for~$\HHp$ seems problematic, a more realistic way for proving that $\HHp$ embeds in~$\HH$ could be to construct a faithful representation of~$\HHp$ in a group of matrices. No such representation is known so far, but mapping~$\gh\ii$ to the surjection~$\FF_\ii$ from~$\ZZZZp$ to itself defined by 
$$\FF_\ii(\kk):= \kk \text{ for $\kk \le \ii + 1$},\ \FF_\ii(\ii + 2):= \ii, \text{\ and\ }\FF_\ii(\kk):= \kk - 1 \text{ for $\kk \ge \ii + 3$}$$
provides a representation~$\rho$ of~$\HHp$ that does not factor through~$\FFp$. The images of~$\gh1^2\gh2$ and $\gh1\gh2\gh3$ under~$\rho$ coincide, so $\rho$ is not faithful, but experiments reported in~\cite{Tes} suggest that the polynomial deformation~$\widetilde\rho$ of~$\rho$ that maps~$\gh\ii$ to the linear transfor\-mation~$\widetilde\FF_\ii$ defined by $\widetilde\FF_\ii(\vec\xx)_\kk:= \xx_\kk$ for $\kk \le \ii$, $\widetilde\FF_\ii(\vec\xx)_\kk:= \xx_{\kk - 1}$ for $\kk \ge \ii + 3$, plus
$$\widetilde\FF_\ii(\vec\xx)_{\ii + 1}:= \ttt\xx_\ii + (1 - \ttt)\xx_{\ii + 1}\quad \text{and}\quad \widetilde\FF_\ii(\vec\xx)_{\ii + 2}:= (1 + \ttt)\xx_\ii - \ttt\xx_{\ii + 1}$$
could be faithful. The involved matrices are not invertible, so proving that $\widetilde\rho$ is faithful would not solve Question~\ref{Q:Embed} directly, but it could be a promising first step.


\end{document}